\theoremstyle{plain}
\newtheorem{theorem}{Theorem}[section]
\newtheorem{lemma}[theorem]{Lemma}
\newtheorem{corollary}[theorem]{Corollary}
\newtheorem{proposition}[theorem]{Proposition}
\theoremstyle{definition}
\newtheorem{definition}[theorem]{Definition}
\newtheorem{example}[theorem]{Example}
\newtheorem{remark}[theorem]{Remark}
\theoremstyle{remark}
\newcounter{zahl}
\def\theenumi{(\alph{enumi})}
\def\p@enumii{\theenumi}
\newcommand{\DS}{\displaystyle}
\newcommand{\TS}{\textstyle}
\newcommand{\SC}{\scriptstyle}
\newcommand{\SSC}{\scriptscriptstyle}
\DeclareMathOperator{\Frob}{Frob}
\DeclareMathOperator{\GL}{GL}
\DeclareMathOperator{\GSp}{GSp}
\DeclareMathOperator{\Ind}{Ind}
\DeclareMathOperator{\Spec}{Spec}
\DeclareMathOperator{\Spf}{Spf}
\newcommand{\alg}{{\rm alg}}
\DeclareMathOperator{\codim}{codim}
\newcommand{\dom}{{\rm dom}}
\newcommand{\fpqc}{{\it fpqc}}
\newcommand{\fppf}{{\it fppf}}
\DeclareMathOperator{\rk}{rk}
\newcommand{\semidir}{\mbox{$\times\hspace{-1mm}\raisebox{0.17mm}{$\shortmid$}$}}
\renewcommand{\phi}{\varphi}
\renewcommand{\epsilon}{\varepsilon}
\newcommand{\BOne} {{\mathchoice{\hbox{\rm1\kern-2.7pt l\kern.9pt}}
                              {\hbox{\rm1\kern-2.7pt l\kern.9pt}}
                              {\hbox{\scriptsize\rm1\kern-2.3pt l\kern.4pt}}
                              {\hbox{\scriptsize\rm1\kern-2.4pt l\kern.5pt}}}}
\newcommand{\BF}{{\mathbb{F}}}
\newcommand{\BG}{{\mathbb{G}}}
\newcommand{\BN}{{\mathbb{N}}}
\newcommand{\BQ}{{\mathbb{Q}}}
\newcommand{\BZ}{{\mathbb{Z}}}
\newcommand{\CB}{{\cal{B}}}
\newcommand{\CC}{{\cal{C}}}
\newcommand{\CalD}{{\cal{D}}}
\newcommand{\CF}{{\cal{F}}}
\newcommand{\CG}{{\cal{G}}}
\newcommand{\CH}{{\cal{H}}}
\newcommand{\CI}{{\cal{I}}}
\newcommand{\CL}{{\cal{L}}}
\newcommand{\CN}{{\cal{N}}}
\newcommand{\CO}{{\cal{O}}}
\newcommand{\CP}{{\cal{P}}}
\newcommand{\FP}{{\mathfrak{P}}}
\newcommand{\Fb}{{\mathfrak{b}}}
\newcommand{\Fm}{{\mathfrak{m}}}
\newcommand{\Fp}{{\mathfrak{p}}}
\newcommand{\rsurj}{\mbox{\mathsurround=0pt \;$\longrightarrow \hspace{-0.7em} \to$\;}}
\newcommand{\es}{\enspace}
\newcommand{\dual}{^{\SSC\lor}}
\newcommand{\ul}[1]{{\underline{#1}}}
\newcommand{\ol}[1]{{\overline{#1}}}
\newcommand{\wh}[1]{{\widehat{#1}}}
\newcommand{\wt}[1]{{\widetilde{#1}}}
\newcommand{\whtimes}{\wh{\raisebox{0ex}[0ex]{$\times$}}}
\newcommand{\invlim}[1][]{\ifthenelse{\equal{#1}{}}
{\DS \lim_{\longleftarrow}}
{\DS \lim_{\underset{#1}{\longleftarrow}}}
}
\newcommand{\dirlim}[1][]{\ifthenelse{\equal{#1}{}}
{\DS \lim_{\longrightarrow}}
{\DS \lim_{\underset{#1}{\longrightarrow}}}
}
\newcommand{\dbl}{{\mathchoice{\mbox{\rm [\hspace{-0.15em}[}}
                              {\mbox{\rm [\hspace{-0.15em}[}}
                              {\mbox{\scriptsize\rm [\hspace{-0.15em}[}}
                              {\mbox{\tiny\rm [\hspace{-0.15em}[}}}}
\newcommand{\dbr}{{\mathchoice{\mbox{\rm ]\hspace{-0.15em}]}}
                              {\mbox{\rm ]\hspace{-0.15em}]}}
                              {\mbox{\scriptsize\rm ]\hspace{-0.15em}]}}
                              {\mbox{\tiny\rm ]\hspace{-0.15em}]}}}}
\newcommand{\dpl}{{\mathchoice{\mbox{\rm (\hspace{-0.15em}(}}
                              {\mbox{\rm (\hspace{-0.15em}(}}
                              {\mbox{\scriptsize\rm (\hspace{-0.15em}(}}
                              {\mbox{\tiny\rm (\hspace{-0.15em}(}}}}
\newcommand{\dpr}{{\mathchoice{\mbox{\rm )\hspace{-0.15em})}}
                              {\mbox{\rm )\hspace{-0.15em})}}
                              {\mbox{\scriptsize\rm )\hspace{-0.15em})}}
                              {\mbox{\tiny\rm )\hspace{-0.15em})}}}}
\DeclareMathOperator{\QIsog}{QIsog}
\newcommand{\Gr}{Gr}
\newcommand{\Flag}{\CF\!lag}
\newcommand{\T}{T}
\newcommand{\el}{l}
\def\s{\sigma^\ast}
\def\onto{\rsurj}
\def\isoto{\stackrel{}{\mbox{\hspace{1mm}\raisebox{+1.4mm}{$\SC\sim$}\hspace{-3.5mm}$\longrightarrow$}}}
\newbox\mybox
\def\arrover#1{\mathrel{
       \setbox\mybox=\hbox spread 1.4em{\hfil$\scriptstyle#1$\hfil}
       \vbox{\offinterlineskip\copy\mybox
             \hbox to\wd\mybox{\rightarrowfill}}}}
\newcommand{\Test}{Y}
\newcommand{\kkk}{\tilde f}
\newcommand{\rr}{r}
\def\ulV{{\underline{V\!}\,}{}}
\def\onto{\mbox{$\kern2pt\to\kern-8pt\to\kern2pt$}}
\begin{document}


\author{Urs Hartl and Eva Viehmann}

\title{Foliations in deformation spaces of local $G$-shtukas}
\date{}

\maketitle

\begin{abstract}
We study local $G$-shtukas with level structure over a base scheme whose Newton polygons are constant on the base. We show that after a finite base change and after passing to an \'etale covering, such a local $G$-shtuka is isogenous to a completely slope divisible one, generalizing corresponding results for $p$-divisible groups by Oort and Zink. As an application we establish a product structure up to finite surjective morphism on the closed Newton stratum of the universal deformation of a local $G$-shtuka, similarly to Oort's foliations for $p$-divisible groups and abelian varieties. This also yields bounds on the dimensions of affine Deligne-Lusztig varieties and proves equidimensionality of affine Deligne-Lusztig varieties in the affine Grassmannian.

\noindent
{\it Mathematics Subject Classification (2000)\/}: 
20G25   
(11G09, 
14L05,  
14M15)  
\end{abstract}

\bigskip

%
%

\section{Introduction}

We denote by $\BF_q$ the finite field with $q$ elements and by $k$ an algebraically closed field which is also an $\BF_q$-algebra. 

Let $G$ be a split connected reductive group over $\BF_q$. Local $G$-shtukas are analogs over the function field $\BF_q\dpl z\dpr$ of $p$-divisible groups with extra structure. To define them, let $LG$ be the loop group of $G$ see \cite[Definition 1]{Faltings03}. I.e.~$LG$ is the ind-scheme over $\BF_q$ representing the sheaf of groups for the \fpqc-topology whose sections for an $\BF_q$-algebra $R$ are given by $LG(\Spec R)\,=\,G\bigl(R\dbl z\dbr[\frac{1}{z}]\bigr)$. We also use the analogous notion for example for parabolic subgroups of $G$. Let $K_0$ be the infinite dimensional affine group scheme over $\BF_q$ with $K_0(\Spec R)\,=\,G\bigl(R\dbl z\dbr\bigr)$. In the natural way $K_0$ can be viewed as a subsheaf of $LG$.  Every $K_0$-torsor over an $\BF_q$-scheme $S$ for the \fpqc-topology is already a $K_0$-torsor for the \'etale topology by \cite[Proposition~2.1]{HV1}. For such a $K_0$-torsor $\CG$ on $S$ let $\CL\CG$ be the associated $LG$-torsor and $\s\CL\CG$ the pullback of $\CL\CG$ under the $q$-Frobenius morphism $\Frob_q:S\to S$. Then a \emph{local $G$-shtuka} over $S$ is a pair $\ul\CG=(\CG,\phi)$ consisting of a $K_0$-torsor $\CG$ on $S$ and an isomorphism of $LG$-torsors $\phi:\s\CL\CG\isoto\CL\CG$. Local $G$-shtukas over $k$ can also be described as follows. There exists a trivialization $\CG\cong (K_0)_k$ and with respect to such a trivialization $\phi$ corresponds to an element $b\in LG(k)$. A change of the trivialization replaces $b$ by $g^{-1}b\s(g)$ for $g\in K_0(k)$, where $\s$ is the endomorphism of $K_0$ and of $LG$ induced by $\Frob_q$. The special case of local $\GL_n$-shtukas is a direct analog of $p$-divisible groups in this context of function fields \cite{HartlDict}. For $G=\GSp_{2n}$ one obtains the analog of polarized $p$-divisible groups. 
For a definition of local $G$-shtukas and their moduli over more general schemes see \cite{HV1}. In contrast to \cite{HV1} we here only consider local $G$-shtukas over  base schemes $S$ over $\BF_q=\BF_q\dbl\zeta\dbr/(\zeta)$ instead of over $\BF_q\dbl\zeta\dbr$. The restriction to base schemes with $\zeta=0$ corresponds to $p$-divisible groups over $\BF_p$-schemes instead of over $\BZ_p$-schemes. Whenever it does not require additional effort we also consider more generally local $G$-shtukas with a level structure by certain subgroups $K$ of $K_0$, the most interesting cases being $K=K_0$ (in which case one obtains the definition above) or $K=I$, an Iwahori subgroup of $K_0$. For simplicity we only consider the case $K=K_0$ in the introduction.

Let $B\supset T$ be a Borel subgroup and a maximal split torus of $G$. Recall that a coweight $\lambda\in X_\ast(\T)$ is called dominant if $\langle\lambda,\alpha\rangle\ge0$ for every positive root $\alpha$ of $G$. To a local $G$-shtuka $\ul\CG$ over an algebraically closed field $k$ we assign its Hodge point or Hodge polygon, i.e.~the unique dominant coweight $\mu=\mu_{\ul\CG}\in X_*(\T)$ such that the element $b$ associated with $\ul\CG$ is in the double coset $K_0(k)z^{\mu}K_0(k)$ where $z^{\mu}$ denotes the image of $z$ under $\mu:\mathbb{G}_m\rightarrow \T$. For $\ul\CG$ over a scheme $S$ we consider the function $\mu_{\ul\CG}$ assigning to each geometric point $s$ of $S$ the Hodge point $\mu_{\ul\CG}(s)$ of $\ul\CG_s$. It has the property that its image $\overline{\mu}_{\ul\CG}$ in $\pi_1(G)=X_*(\T)/({\rm coroot~lattice})$ is locally constant on $S$ (see \cite[Proposition 3.4]{HV1}). If $S$ is reduced we say that $\ul\CG$ is bounded by some dominant $\mu\in X_*(\T)$ if $\mu_{\ul\CG}(s)\preceq\mu$ for all $s\in S$. Here $\preceq$ is the Bruhat ordering on the set of dominant coweights of $G$, i.e.~$\mu_1\preceq\mu_2$ if and only if the difference $\mu_2-\mu_1$ is a non-negative integral linear combination of simple coroots. There is also a notion of boundedness for local $G$-shtukas over non-reduced schemes, compare Definition \ref{DefBounded} or \cite[Definition 3.5]{HV1}. 

A second important invariant of a local $G$-shtuka over an algebraically closed field is its isogeny class. It corresponds to the $\sigma$-conjugacy class $[b]=\{g^{-1}b\s(g):g\in LG(k)\}$ of the associated element $b\in LG(k)$. In \cite{Kottwitz85}, Kottwitz classifies the set $B(G)$ of $\sigma$-conjugacy classes of elements $b\in LG(k)$ by two invariants. One of them is $\kappa(b):=\overline{\mu}_{(K_k,b\s)}$. Note that it can also be defined directly from $b$ via the Kottwitz map $\kappa:LG(k)\rightarrow \pi_1(G)$. Especially, it only depends on $[b]$. The other invariant is the \emph{Newton point} or \emph{Newton polygon} $\nu_b\in X_\ast(\T)_{\mathbb{Q}}$ of $b$. For $G=\GL_n$ this is the usual Newton polygon of the $\sigma$-linear map $b\s$, for general $G$ it is defined by requiring that it be functorial in $G$. The two invariants have the same image in $\pi_1(G)_{\mathbb{Q}}$. For $\ul\CG=(K_k,b\s)$ we have $\nu_{\ul\CG}\preceq\mu_{\ul\CG}$ as elements of $X_\ast(\T)_{\mathbb{Q}}$ (see \cite[Proposition 7.2]{HV1}).
The Newton point of a local $G$-shtuka $\ul\CG$ over a scheme $S$ is defined as the function assigning to each $k$-valued point $s$ of $S$ the Newton point $\nu=\nu_{\ul\CG}(s)$ of $\ul\CG_s$. Over schemes, and already over general fields, two local $G$-shtukas whose Newton points coincide are not necessarily isogenous. Let $(\nu,\overline{\mu})$ be the pair of invariants associated with $[b]$ for some $b\in LG(k)$. Let $$\mathcal{N}_{\nu}=\{s\in S(k): \nu_{\ul\CG}(s)=\nu, \overline{\mu}_{\ul\CG_s}=\overline{\mu}\}.$$ By \cite[Theorem 7.3]{HV1} (see also \cite[Theorem 3.6]{RapoportRichartz}) this is the set of points of a locally closed subscheme of $S$ which is open in the analogously defined subscheme where we replace equality of Newton points by the condition $\nu_{\ul\CG}(s)\preceq\nu$. By abuse of notation we do not include $\overline{\mu}$ in the notation (although the stratum depends on $\overline{\mu}$) and call $\mathcal{N}_{\nu}$ the Newton stratum associated with $\nu$. The reason for this is that $\overline{\mu}$ is locally constant on $S$, so the additional datum $\overline{\mu}$ only singles out specific connected components. 

We now describe the results obtained in this article. Let $\ul\CG$ be a local $G$-shtuka over a noetherian scheme $S$ whose Newton point is constant. Let $M$ be the centralizer of the Newton point. It is the Levi component containing $T$ of a standard parabolic subgroup $P$ of $G$, i.e.~of a parabolic subgroup containing $B$. Let $N$ be the unipotent radical of $P$. Let $\ol P$ be the opposite parabolic. We say that $\ul\CG$ has a slope filtration if there exists a local $\ol P$-shtuka $\ul{\bar\CP}=(\bar\CP,\phi)$ over $S$ such that $\ul\CG\cong i_\ast\ul{\bar\CP}$ where $i:\ol P\hookrightarrow G$ is the inclusion morphism, and \'etale locally on $S$ the Frobenius $\phi$ is represented by an element $b\in L\ol P(S)$ with decomposition $b=m\ol n$ for some $m$ with $M$-dominant Newton point equal to $\nu$.

In Section \ref{seccsd} we consider a stronger condition than having a slope filtration, namely to be completely slope divisible. In addition to having a slope filtration it requires certain integrality conditions on the elements $m$ and $\ol n$. It is related to the condition for $p$-divisible groups to be completely slope divisible defined by Zink in \cite[Definition 10]{Zink}, compare Section \ref{secgln}. Our main result on slope filtrations or rather on completely slope divisible local $G$-shtukas is the following theorem.\\

\noindent{\bf Theorem \ref{ThmOZ2.4}.} {\it Let $S$ be an integral noetherian $\BF_q$-scheme and let $g\in LG(S)$ be such that the $\sigma$-conjugacy classes of $g_s$ in the geometric points $s$ of  $S$ all coincide. Let $x$ be a $P$-fundamental alcove associated with this $\sigma$-conjugacy class as in Remark \ref{remfundalc} \ref{a}. Then there are morphisms $\wt S\xrightarrow{\beta} S'\xrightarrow{\alpha} S$  with $S'$ integral, $\alpha$ finite surjective, and $\beta$ an \'etale covering, and a bounded element $\tilde h\in LG(\wt S)$ satisfying $\tilde h^{-1}g_{\wt S}\s(\tilde h)\in I(\wt S)xI(\wt S)$.}\\

Here $I$ denotes the standard Iwahori subgroup scheme of $K_0$. This theorem together with Proposition \ref{PropSlopeFilt} and Corollary \ref{CorCSD} implies that if $\ul\CG$ is a local $G$-shtuka over a scheme $S$ whose quasi-isogeny class is constant in all geometric points of $S$, then after a suitable finite base change and after passing to an \'etale covering $\ul\CG$ is isogenous to a local $G$-shtuka which is completely slope divisible. This is an analog of \cite[Theorem 7]{Zink} and \cite[Lemma 2.4]{OortZink}. The proof is also similar to the one given by Zink respectively Oort and Zink, however there is one ingredient \cite[Lemma 9]{Zink} which does not have an immediate generalization from $\GL_n$ to general reductive groups, so we replace it by a different argument here.

We then use this result to introduce and study the analogs of Oort's \cite{Oort04} foliations and central leaves in our situation. A description of central leaves for a $p$-divisible group whose Newton polygon has just two slopes which uses slope filtrations can be found in \cite{Chai}. Let $\BF/\BF_q$ be a field extension, $S$ a noetherian $\BF$-scheme, and let $\ul\CG$ be a bounded local $G$-shtuka over $S$. Let $\ul\BG$ be a local $G$-shtuka over $\BF$. We consider the subset of $S$ given by
\begin{eqnarray*}
\CC_{\ul\BG,S}&:=&\bigl\{\,s\in S:\ul\BG_k\cong\ul\CG_s\otimes_{k(s)}k\quad\text{over an algebraically closed extension }k/k(s)\,\bigr\}
\end{eqnarray*}
Here and throughout the article $k(s)$ denotes the residue field of $s$. In Corollary~\ref{ThmCentralLeaf} we show that this defines a reduced locally closed subscheme which is closed in the Newton stratum of $\ul\BG$. Any geometrically irreducible component of $\CC_{\ul\BG,S}$ with the induced reduced subscheme structure is called a \emph{central leaf corresponding to $\ul\BG$ in $S$}. 

In this way we do not only obtain a decomposition of each Newton stratum into a disjoint union of (in general infinitely many) closed subschemes, but in the special case of the Newton stratum of the universal deformation of a local $G$-shtuka bounded by some $\mu$ the leaves can be arranged in a continuous family. To formulate the corresponding result recall the following definition. The closed affine Deligne-Lusztig variety associated with $b\in LG(k)$ and a dominant $\mu\in X_*(\T)$ is the reduced  closed subscheme $X_{\preceq\mu}(b)$ of the affine Grassmannian $LG/K_0$ with $$X_{\preceq\mu}(b)(k)=\{g\in LG(k)/K_0(k):g^{-1}b\s(g)\in K_0(k)z^{\mu'}K_0(k)\text{ for some }\mu'\preceq\mu\}.$$ It is the underlying topological space of a Rapoport-Zink space for local $G$-shtukas; see~\cite[Theorem 6.3]{HV1}. There are also variants where $K_0$ is replaced by an Iwahori subgroup or where we require $\mu'=\mu$ (see Section \ref{secadlv}). In Theorem \ref{ThmProductStructure} we prove (a slight generalization of) the following theorem.\\

\noindent{\bf Theorem.} {\it Let $\ul{\BG}=\bigl(K_{0,k},b\s\bigr)$ be a local $G$-shtuka over an algebraically closed field $k$ bounded by a dominant $\mu\in X_*(\T)$ and with Newton point $\nu$. Let $\CN_\nu$ be the Newton stratum of $[b]$ in the universal deformation space of $\ul{\BG}$ bounded by $\mu$. Let $x$ be a $P$-fundamental alcove associated with $[b]$. Then there is a reduced scheme $S$ and a finite surjective morphism $S\twoheadrightarrow\CN_{\nu}$ which factorizes into finite surjective morphisms $S\rightarrow X_{\preceq\mu}(b)^\wedge\,\whtimes_k\, \CI^{\wedge}\rightarrow \mathcal{N}_{\nu}$. Here $X_{\preceq\mu}(b)^\wedge$ denotes the completion of $X_{\preceq\mu}(b)$ at $1$ and $\CI^{\wedge}$ denotes the completion of $\mathbb{A}^{\langle 2\rho,\nu\rangle}$ at $0$. Furthermore, $\CC_{\ul{\BG},\mathcal{N}_{\nu}}$ is geometrically irreducible and equal to the image of $\{1\}\whtimes_k\CI^{\wedge}$ in $\CN_{\nu}$.}\\

\noindent In other words, the Newton stratum is up to a finite surjective morphism a product of a closed affine Deligne-Lusztig variety and a central leaf which is an affine space in this case. The dimension of each irreducible component of the Newton stratum of the universal deformation of a local $G$-shtuka bounded by some $\mu$ can be bounded below using the purity of the Newton stratification \cite[Theorem 7.4]{HV1}. Using this and the fact that the dimension of the central leaves is also known we obtain lower bounds on the dimension of each irreducible component of the affine Deligne-Lusztig varieties. For the affine Deligne-Lusztig varieties in the affine Grassmannian discussed above this implies that they are equidimensional of some dimension which is known by \cite{GHKR},\cite{Viehmann06}.\\

\noindent{\bf Corollary \ref{appladlv}(a).} {\it Let $b\in LG(k)$ and let $\nu$ be the Newton point of $b$. Let $\mu\in X_*(\T)$ be dominant with $X_{\mu}(b)\neq \emptyset$. Then $X_{\preceq\mu}(b)$ and $X_{\mu}(b)$ are equidimensional of dimension $\langle\rho,\mu-\nu\rangle-\frac{1}{2}(\rk_{\mathbb{F}_q\dpl z\dpr}(G)-\rk_{\mathbb{F}_q\dpl z\dpr}(J_b))$. In particular, $X_{\mu}(b)$ is open and dense in $X_{\preceq\mu}(b)$.}\\

\noindent In \cite{HV1} we proved the corollary under the additional assumption that $b$ is basic. Equidimensionality of $X_{\mu}(b)$ for $b\in\T(\mathbb{F}_q\dpl z\dpr)$ (not necessarily basic) is shown in \cite[Proposition 2.17.1]{GHKR}. For affine Deligne-Lusztig varieties in the affine flag variety the bounds one obtains in this way are in general strictly smaller than the dimension of the variety. In this case there are examples where the affine Deligne-Lusztig varieties are indeed not equidimensional, see \cite{GoertzHe}. The results about affine Deligne-Lusztig varieties in the affine flag variety prove a conjecture of Beazley \cite[Conjecture 1]{Beazley}.

\bigskip

\noindent{\it Acknowledgments.} The first author acknowledges support of the DFG (German Research Foundation) in form of DFG-grant HA3006/2-1 and SFB 478, project C5. This work has also been partially supported by SFB/TR 45 ``Periods, Moduli Spaces and Arithmetic of Algebraic Varieties'' of the DFG.

\section{Quotients of loop groups and lifting properties}

Let $\CO_S\dbl z\dbr$ be the sheaf of $\CO_S$-algebras on $S$ for the \fpqc-topology whose ring of sections on an $S$-scheme $\Test$ is the ring of power series $\CO_S\dbl z\dbr(\Test):=\Gamma(\Test,\CO_\Test)\dbl z\dbr$.  This is indeed a sheaf being the countable direct product of $\CO_S$. Let $\CO_S\dpl z\dpr$ be the \fpqc-sheaf of $\CO_S$-algebras on $S$ associated with the presheaf $\Test\mapsto\Gamma(\Test,\CO_\Test)\dbl z\dbr[\frac{1}{z}]$. If $\Test$ is quasi-compact then $\CO_S\dpl z\dpr(\Test)=\Gamma(\Test,\CO_\Test)\dbl z\dbr[\frac{1}{z}]$ by \cite[Exercise II.1.11]{Hartshorne}. 

We consider the ordering $\preceq$ on the group of coweights $X_\ast(T)$ of $G$, which is defined as $\mu_1\preceq\mu_2$ if and only if the difference $\mu_2-\mu_1$ is a non-negative integral linear combination of simple coroots. If $\mu_1,\mu_2$ are dominant it coincides with the Bruhat ordering. On $X^*(T)$ we consider the analogous ordering. On $X_*(T)_{\mathbb{Q}}=X_*(T)\otimes_{\mathbb{Z}}\mathbb{Q}$ we use the ordering with $\mu_1\preceq\mu_2$ if and only if the difference $\mu_2-\mu_1$ is a non-negative rational linear combination of simple coroots. For $\mu\in X_*(T)$ we denote by $z^{\mu}$ the image of $z\in\mathbb{G}_m\bigl(\mathbb{F}_q\dpl z\dpr\bigr)$ under the morphism $\mu:\mathbb{G}_m\rightarrow T$.

For $n\geq 0$ let $K_{n}, I_n\subset K_0$ be the subgroup schemes defined by
\begin{align*}
K_{n}(S)&=\bigl\{\,g\in K_0(S): \es g\equiv 1\pmod{z^{n}}\bigr\}\\
I_n(S) &= \bigl\{\,g\in K_n(S): ( g~{\rm mod~}{z^{n+1}})\in B\bigl(\Gamma(S,\CO_S)\dbl z\dbr/(z^{n+1})\bigr)\bigr\}.
\end{align*} 
We write $I$ for the \emph{Iwahori subgroup scheme} $I_0$. We use the letter $K$ for arbitrary subgroup schemes of $K_0$ containing some $K_n$, not only for $K_0$ itself. 

Let $LG/K$ be the sheaf of sets associated with the presheaf $S\mapsto LG(S)/K(S)$ on the $\fppf$-site of $\BF_q$-schemes $S$. In particular $\Gr:=LG/K_0$ is called the \emph{affine Grassmannian} and $\Flag:=LG/I$ is called the \emph{affine flag variety}; see \cite[\S4.5]{BeilinsonDrinfeld}, \cite{BeauvilleLaszlo}, \cite{LaszloSorger}, who comprehensively develop the theory of the affine Grassmannian over the field of complex numbers. Most of their results, and in particular all we use here, also hold with the same proofs over $\BF_q$. For instance \cite{Ngo-Polo}, \cite{Faltings03} reprove some statements. Moreover \cite{PR2} present a generalization of results and proofs to twisted affine flag varieties over $\BF_q$. The sheaf $LG/K$ is represented by an ind-algebraic space over $\BF_q$ of ind-finite type. $\Gr$ and $\Flag$ are even ind-schemes. The latter is proved in the literature we just cited by embedding $LG/K$ as a closed ind-scheme of the corresponding ind-scheme for $G=\GL_\rr$. For $K=K_n$ and $K=I_n$ the same can be done by imposing level structures. In the general case one chooses $n$ such that $K_n\subset K$. Then $LG/K$ is the quotient of $LG/K_n$ by the smooth group scheme of finite presentation $K/K_n$. Using \cite[Exemple 4.6.1 and Corollaire 8.1.1]{LM} one concludes that $LG/K$ is an ind-algebraic space.

We consider right $K$-torsors and $LG$-torsors for the \'etale topology on $S$; see \cite[\S2]{HV1} for a thorough discussion of this notion.

\begin{lemma}\label{lemsecunip}
Let $G$ be a pro-unipotent group and $H$ a connected subgroup such that $G/H$ is finite-dimensional. Then the quotient map $G\rightarrow G/H$ has a section.
\end{lemma}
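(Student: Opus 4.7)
The plan is to proceed by dévissage: first reduce to the case where $G$ is a finite-dimensional unipotent algebraic group, then filter $H$ by a chain with $\mathbb{G}_a$-quotients and split each step as a trivial $\mathbb{G}_a$-torsor over an affine base.

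First I would reduce to the finite-dimensional setting. Write $G = \varprojlim_n G^{(n)}$, an inverse limit of finite-dimensional unipotent algebraic groups with surjective transition maps and kernels $N^{(n)} := \ker(G \to G^{(n)})$. Let $H^{(n)}$ denote the scheme-theoretic image of $H$ in $G^{(n)}$, which is closed, connected, and unipotent. Then $G/H = \varprojlim_n G^{(n)}/H^{(n)}$ with affine transition maps. Since by hypothesis $G/H$ is finite-dimensional, these transition maps must eventually be isomorphisms, whence $N^{(n)} \subset H$ for $n$ large enough. Each $N^{(n)}$ is itself a pro-unipotent group expressible as a successive extension by copies of $\mathbb{G}_a$, so a section of $G \to G^{(n)}$ exists by the same $\mathbb{G}_a$-torsor argument used in the second step below. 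Consequently, producing a section of $G \to G/H$ reduces to producing one for the finite-dimensional quotient $G^{(n)} \to G^{(n)}/H^{(n)}$.

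Next, assume $G$ is a finite-dimensional connected unipotent algebraic group and $H \subset G$ a closed connected subgroup. Choose a composition chain of closed connected subgroups
\[
H \ =\ H_0\ \triangleright\ H_1\ \triangleright\ \cdots\ \triangleright\ H_d\ =\ \{1\},
\]
with $H_i$ normal in $H_{i-1}$ and $H_{i-1}/H_i \cong \mathbb{G}_a$; such a chain exists since any connected unipotent algebraic group admits a composition series with $\mathbb{G}_a$-quotients. The projection $G/H_i \to G/H_{i-1}$ is a (right) $H_{i-1}/H_i$-torsor, hence a $\mathbb{G}_a$-torsor. The base $G/H_{i-1}$ is an affine scheme because quotients of unipotent algebraic groups by closed subgroups are affine. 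Since $H^1_{\mathrm{Zar}}(\mathrm{affine},\mathbb{G}_a) = 0$, every such torsor is trivial and admits a section $\sigma_i \colon G/H_{i-1} \to G/H_i$. Composing
\[
G/H \ =\ G/H_0\ \xrightarrow{\sigma_1}\ G/H_1\ \xrightarrow{\sigma_2}\ \cdots\ \xrightarrow{\sigma_d}\ G/H_d\ =\ G
\]
yields the desired section.

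The main obstacle is the reduction step, specifically the claim that finite-dimensionality of $G/H$ forces some $N^{(n)}$ to lie in $H$. This requires interpreting the pro-structure on $G$ and the finite-dimensionality assumption carefully enough to invoke a Mittag--Leffler type stabilization for the inverse system $G^{(n)}/H^{(n)}$. Once this is in place, the remaining two ingredients -- affineness of $G/H_{i-1}$ and vanishing of $H^1$ with $\mathbb{G}_a$-coefficients on affines -- are standard, so the bulk of the care required is at the very beginning of the argument.
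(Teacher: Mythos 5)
Your plan is correct in spirit and shares the key mechanism with the paper's proof --- filter by connected subgroups with successive quotients $\mathbb{G}_a$, then split each step as a $\mathbb{G}_a$-torsor over an affine base --- but the dévissage is organized differently. The paper works directly in the pro-unipotent setting: it first chooses a finite chain $G = G_0 \supsetneq G_1 \supsetneq \dotsm \supsetneq G_n = H$ with $G_i/G_{i+1} \cong \mathbb{G}_a$ (finite because $G/H$ is finite-dimensional) and shows inductively $G/G_i \cong \mathbb{A}^i$, so in particular $G/H \cong \mathbb{A}^n$; then it filters $H$ by an \emph{infinite} descending chain $H = H_0 \supsetneq H_1 \supsetneq \dotsm$ with $\bigcap H_i = \{1\}$ and $H_i/H_{i+1}\cong\mathbb{G}_a$, lifts the identity section $G/H \to G/H_1 \to G/H_2 \to \dotsm$ step by step using triviality of $\mathbb{G}_a$-torsors over $\mathbb{A}^n$, and passes to the limit. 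You instead first reduce to the finite-dimensional quotient $G^{(n)} \to G^{(n)}/H^{(n)}$ and handle the unipotent algebraic group case there with a finite composition series of $H^{(n)}$. This is a valid reorganization, and your use of Serre vanishing $H^1(\text{affine},\mathcal{O}) = 0$ is precisely what is needed (the paper's appeal to triviality of line bundles over affine space is, if anything, slightly heavier than necessary).

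Two remarks on the reduction step you flagged as the main obstacle. First, the stabilization claim ($N^{(n)} \subset H$ for $n \gg 0$) does follow from finite-dimensionality: since the $G^{(n)}/H^{(n)}$ form an inverse system of irreducible affine unipotent quotients with surjective transition maps and dimensions bounded by $\dim G/H$, the dimensions stabilize; once they do, the fiber $M^{(n)}/(M^{(n)}\cap H^{(n+1)})$ of each transition map is a connected unipotent group of dimension $0$, hence trivial, so the transition maps are isomorphisms and $N^{(n)}H/H$ is trivial. So the gap is fillable, but you are right that it requires an argument. Second, be aware that your reduction does not actually eliminate the infinite-dimensional limit argument: the section of $G \to G^{(n)}$ you invoke still has infinite-dimensional pro-unipotent kernel $N^{(n)}$ and requires exactly the kind of step-by-step lifting plus passage to the limit that the paper performs for the chain $H \supset H_1 \supset \dotsm$. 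So your route trades the paper's limit argument over the $H$-filtration for a Mittag--Leffler stabilization plus a limit argument for $G \to G^{(n)}$; the total work is comparable, and the paper's direct route is a bit leaner.
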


\begin{proof}
Note that we do not require $H$ to be normal. As $G$ is pro-unipotent we can choose a finite sequence of subgroups $G_0=G\supsetneq G_1\supsetneq \dotsm\supsetneq G_n=H$ with $G_i/G_{i+1}\cong \mathbb{G}_a$ for all $i$. We see that $G/G_i\rightarrow G/G_{i-1}$ is a $\mathbb{G}_a\cong \mathbb{A}^1$-bundle over $G/G_{i-1}$. Using induction and that line bundles over an affine space are trivial we obtain $G/G_{i}\cong \mathbb{A}^i$. As $H$ is pro-unipotent we can choose a sequence of subgroups $H_0=H\supsetneq H_1\supsetneq \dotsm$ with $\bigcap_{i\geq 0}H_i=(1)$ and $H_i/H_{i+1}\cong \mathbb{G}_a$. Using again induction and also a limit argument we see that it is enough to show that we have a section $G/H\rightarrow G/ H_1$. This follows again from the fact that line bundles over $\mathbb{A}^n$ are trivial.
\end{proof}

\begin{corollary}\label{corseckn}
\begin{enumerate}
\item For any $n> 0$, the projection $K_0\rightarrow K_0/K_n$ has a section. 
\item Let $I'$ be a connected subgroup of $I$ containing some $K_n$. Then $I\rightarrow I/I'$ has a section.
\end{enumerate}
\end{corollary}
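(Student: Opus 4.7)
The plan is to bootstrap from Lemma \ref{lemsecunip} by using the constant section $G(R) \hookrightarrow G(R\dbl z\dbr) = K_0(R)$ to split off the non-pro-unipotent parts of $K_0$ and $I$.

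For part (a), the normal subgroup $K_1 \triangleleft K_0$ is pro-unipotent: in the filtration $K_1 \supset K_2 \supset \cdots$ each successive quotient $K_i/K_{i+1}$ is isomorphic to $\Lie(G)$ viewed as a vector group (via $1 + z^iA + O(z^{i+1}) \mapsto A$). In particular $K_n$ is a connected closed subgroup of $K_1$, so Lemma \ref{lemsecunip} yields a section $\tau \colon K_1/K_n \to K_1$. The constant section $c \colon G \hookrightarrow K_0$ splits the quotient $K_0 \twoheadrightarrow K_0/K_1 = G$, giving a scheme-theoretic isomorphism $K_0 \cong K_1 \rtimes G$ and likewise $K_0/K_n \cong (K_1/K_n) \rtimes G$. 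Combining $\tau$ with the identity on $G$ produces the desired section.

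For part (b), the Iwahori decomposes as $I \cong I^+ \rtimes T$, where $I^+ := \ker(I \to T)$ (the pro-$p$ Iwahori) is pro-unipotent, being an extension of the unipotent radical $U$ of $B$ by the pro-unipotent group $K_1$, and $T$ is embedded as constants. Since $I'$ is connected, its image $T' := \pi(I')$ under the projection $\pi \colon I \to T$ is a connected subgroup of a torus, hence a subtorus. A maximal torus $S$ of $I'$ intersects $I^+$ trivially (tori contain no unipotent elements), so $S \to T'$ is injective and, for dimension reasons, an isomorphism; after conjugating $I'$ by a suitable element of $I^+$ we may therefore assume $S = T' \subset I'$, yielding a decomposition $I' \cong (I' \cap I^+) \rtimes T'$ in which the kernel $I' \cap I^+$ is connected. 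Choose a complementary split subtorus $T'' \subset T$, so $T = T' \times T''$. A direct check using the unique decomposition $I = T'' \cdot (T' \cdot I^+)$ (which holds since $T'' \cap (T' \cdot I^+) = 1$) shows that
\[ I/I' \;\cong\; T'' \times I^+/(I' \cap I^+) \]
as schemes. Applying Lemma \ref{lemsecunip} to the pro-unipotent group $I^+$ and its connected subgroup $I' \cap I^+$ (whose quotient is finite-dimensional since $K_n \subset I' \cap I^+$) produces a section on the second factor; combined with the constant section $T'' \hookrightarrow I$ on the first, this yields the required section of $I \to I/I'$.

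The main obstacle is the conjugation step in part (b): locating a maximal torus inside the connected subgroup $I'$ and moving it to the standard position $T' \subset T$. This relies on the fact that all maximal tori of a connected linear algebraic group are conjugate, applied inside the pro-algebraic group $\pi^{-1}(T') = T' \ltimes I^+$, whose maximal tori are exactly the $I^+$-conjugates of $T'$. Once this is arranged, the remaining claims—connectedness of $I' \cap I^+$ and the scheme-theoretic product decomposition of $I/I'$—follow by a straightforward manipulation of the semi-direct product structures.
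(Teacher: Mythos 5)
Your argument for part (a) coincides with the paper's proof: split off the constant copy of $G$ via the exact sequence $0\to K_1/K_n\to K_0/K_n\to K_0/K_1\to0$ and apply Lemma~\ref{lemsecunip} to the pro-unipotent group $K_1$ and its connected subgroup $K_n$.

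For part (b) the paper only says ``the second part is shown similarly,'' so there is no explicit argument to compare against. Your fleshing-out is correct, and it addresses a genuine subtlety that ``similarly'' glosses over: the torus part of $I'$ need not sit in the standard position inside $T$, so a conjugation is required before one can split $I/I'$ as (torus factor)\,$\times$\,(pro-unipotent quotient). Your treatment via the Levi decomposition $I\cong I^+\rtimes T$, the conjugacy of maximal tori in the connected solvable group $\pi^{-1}(T')=T'\ltimes I^+$, and the resulting product decomposition $I/I'\cong T''\times I^+/(I'\cap I^+)$ is sound; in particular your derivation of connectedness of $I'\cap I^+$ from the scheme-theoretic isomorphism $I'\cong(I'\cap I^+)\times T'$ (once $T'\subset I'$) is the right way to justify the hypothesis of Lemma~\ref{lemsecunip}, since without the conjugation it is not obvious that the relevant kernel is connected. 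Two small points worth being aware of, though they do not affect the proof: the successive quotients $K_i/K_{i+1}\cong\operatorname{Lie}(G)$ are vector groups rather than single copies of $\mathbb{G}_a$, so one must refine the filtration once more to literally match the hypotheses of Lemma~\ref{lemsecunip}; and the assertion that $T'=\pi(I')$ is a subtorus uses that $I'$ is a smooth connected subgroup, which is implicit in the paper's conventions. Overall your proposal is a correct and more detailed account of what ``similarly'' must mean here.
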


\begin{proof}
For $n=1$ this is given by the inclusion $K_0/K_1(S)\cong G(S)\rightarrow G\bigl(\Gamma(S,\CO_S)\dbl z\dbr\bigr)\cong K_0(S)$. Using the short exact sequence $$0\rightarrow K_1/K_n\rightarrow K_0/K_n\rightarrow K_0/K_1\rightarrow 0$$ and the section for $n=1$ we see that it is enough to prove existence of a section for $K_1\rightarrow K_1/K_n$. That follows from the previous lemma. The second part is shown similarly.
\end{proof}

\begin{lemma}\label{LemmaTildeKEtale}
Let $K\subset K_0$ be a subgroup scheme containing some $K_n$. Let $S$ be an $\BF_q$-scheme and let $h:S\to LG/K$ be a morphism. Then over an \'etale covering $\wt S\to S$ the morphism $h$ lifts to an element $\tilde h\in LG(\wt S)$.
\end{lemma}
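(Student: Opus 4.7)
The plan is to upgrade the defining fpqc-local trivialization of the quotient $LG\to LG/K$ to an \'etale-local one. I would lift $h$ first to $LG/K_0$, then to $LG$, and finally adjust modulo $K$ to recover $h$. To lift to $LG/K_0$, consider the composite $\bar h:=(h\bmod K_0)\colon S\to LG/K_0$; the pullback of the $K_0$-torsor $LG\to LG/K_0$ along $\bar h$ is a $K_0$-torsor on $S$ for the fpqc topology, and by the Proposition~2.1 of \cite{HV1} cited in the introduction it is already \'etale-locally trivial. Thus over some \'etale covering $\alpha\colon S'\to S$ we obtain $g\in LG(S')$ with $g\bmod K_0=\bar h\circ\alpha$.

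Next I would measure the discrepancy modulo $K$. Both $g\bmod K$ and $h|_{S'}$ lie in $(LG/K)(S')$ and project to the same element of $(LG/K_0)(S')$. The trivialization of the $K_0$-torsor given by $g$ identifies the fiber of $LG/K\to LG/K_0$ over $\bar h|_{S'}$ with the constant homogeneous-space bundle $(K_0/K)\times S'$ (via $x\mapsto g^{-1}x$); under this identification $g\bmod K$ becomes the canonical section and $h|_{S'}$ corresponds to some $\bar k\in (K_0/K)(S')$, so that $h|_{S'}=(g\bmod K)\cdot\bar k$ in $LG/K$.

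It then remains to lift $\bar k$ to an element of $K_0$ over an \'etale covering. I would factor the projection as $K_0\twoheadrightarrow K_0/K_n\twoheadrightarrow K_0/K$; the second map is a right $K/K_n$-torsor, and since $K/K_n$ is a closed subgroup scheme of the smooth affine finite-type group scheme $K_0/K_n$, this torsor is \'etale-locally trivial in the cases of interest. Over a further \'etale covering $\beta\colon \wt S\to S'$ I can therefore lift $\bar k|_{\wt S}$ to $\bar k'\in (K_0/K_n)(\wt S)$. Corollary~\ref{corseckn}(a) supplies a scheme-theoretic section $s\colon K_0/K_n\to K_0$, and setting $k:=s\circ\bar k'\in K_0(\wt S)$, the element $\tilde h:=g|_{\wt S}\cdot k\in LG(\wt S)$ satisfies $\tilde h\bmod K=(g\bmod K)\cdot\bar k=h|_{\wt S}$, giving the desired lift.

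The crux is the passage between the fpqc and \'etale topologies at the level of quotients. Proposition~2.1 of \cite{HV1} does the heavy lifting for $K_0$-torsors, while the scheme-theoretic section from Corollary~\ref{corseckn}(a) is precisely what allows descent from $K_0/K_n$ back to $K_0$ without reintroducing fpqc covers. The main residual point I expect to require care is \'etale-local triviality of the auxiliary $K/K_n$-torsor $K_0/K_n\to K_0/K$, but this reduces to smoothness of $K/K_n$ inside the smooth finite-type $K_0/K_n$, which is available for the groups ($K_0$, $I$, $K_n$, $I_n$) that arise in the rest of the paper.
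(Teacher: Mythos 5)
Your proof is correct and reaches the same conclusion, but by a genuinely different route. The paper works through the intermediate quotient $LG/K_n$: it first lifts $h$ to $h':S'\to LG/K_n$ using that $LG/K_n\to LG/K$ is a $K/K_n$-torsor hence smooth, then lifts the composite $S'\to LG/K_n\to\Gr$ to $\tilde h\in LG(\wt S)$ by the Beilinson--Drinfeld theorem, and finally reconciles $\tilde h$ with $h'$ by an element of $K_0/K_n$ lifted via Corollary~\ref{corseckn}(a). You instead lift the coarsest projection $S\to LG/K_0$ directly to $LG$ using \cite[Proposition~2.1]{HV1} (fpqc $K_0$-torsors are \'etale $K_0$-torsors; this is the same underlying fact as the BD citation, just packaged as a statement about torsors rather than about $LG\to\Gr$), and only afterwards measure the remaining discrepancy in $K_0/K$, which you then lift through $K_0/K_n$ and into $K_0$ via the same Corollary~\ref{corseckn}(a). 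Both proofs rest on the same three ingredients (\'etale sections of $LG\to\Gr$, smoothness of $K/K_n$, and the scheme-theoretic section $K_0/K_n\to K_0$), but yours postpones the passage through $K_n$ until after the $LG$-lift, which is arguably a cleaner separation of concerns. One small caution: your justification that $K_0/K_n\to K_0/K$ has \'etale-local sections cites "closed subgroup of a smooth group scheme," which does not in general imply smoothness of $K/K_n$ (think $\mu_p\subset\BG_m$). You hedge this correctly at the end; for the record, the paper simply asserts smoothness of $K/K_n$ in the paragraph preceding the lemma as part of the standing setup, and the paper's own first lifting step relies on it just as yours does, so your appeal to it is fully in line with the paper's hypotheses.
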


\begin{proof}
Let $n\in\mathbb{N}$ with $K_n\subset K$. Then $LG/K_n$ is a $(K_0/K_n)\times_{\BF_q}\Gr$-torsor over the affine Grassmannian $\Gr$ and a $( K/K_n)\times_{\BF_q} (LG/K)$-torsor over $LG/ K$. In particular, the projection $LG/K_n\to LG/K$ is smooth and the map $h$ lifts over an \'etale covering $S'\to S$ to a morphism $h':S'\to LG/K_n$.
$$\xymatrix{
\wt S\ar[r]\ar[d]^{\tilde h}&S'\ar[r]\ar[d]^{h'}&S\ar[d]^{h}\ar[dr]\\
LG\ar[r]&LG/K_n\ar[r]&LG/ K\ar[r]&\Gr
}$$

Consider the composite morphism $S'\to LG/K_n\to\Gr=LG/K_0$. By \cite[Theorem 4.5.1(iii)]{BeilinsonDrinfeld} this morphism lifts to an element $\tilde h\in LG(\wt S)$ over another \'etale covering $\wt S\to S'$. (Note that in \cite{BeilinsonDrinfeld} this is proved to hold even Zariski locally over an algebraically closed field of characteristic zero. Using \cite[Lemma 2.1]{Ngo-Polo} the proof carries over for our base field $\BF_q$ after allowing a finite separable extension of $\BF_q$.) The two elements $h'_{\wt S}$ and $\tilde h$ of $(LG/K_n)(\wt S)$ have the same image in $\Gr$. Since 
\[
(LG/K_n)\times_{\BF_q}(K_0/K_n)\;\isoto\; (LG/K_n)\times_{\Gr}(LG/K_n)\,,\quad (h,g)\;\mapsto\;(h,hg)
\]
is an isomorphism, there is an element $\tilde g\in(K_0/K_n)(\wt S)$ with $\tilde h\tilde g=h'_{\wt S}$ in $(LG/K_n)(\wt S)$. Corollary~\ref{corseckn}(a) implies that $\tilde g$ lifts to an element of $K_0(\wt S)$. Replacing $\tilde h$ by $\tilde h\tilde g$ finishes the proof.
\end{proof}

Let $\wt W=N_{\T}\bigl(k\dpl z\dpr\bigr)/\T\bigl(k\dbl z\dbr\bigr)$ be the extended affine Weyl group of $G$. Here $N_{\T}$ is the normalizer of $\T$ in $G$. We have $\wt W=X_*(\T)\semidir W$, where $W$ is the finite Weyl group of $G$. Another decomposition of $\wt W$ is $\wt W_0\semidir \Omega$ where $\Omega\cong\pi_1(G)$ consists of the elements $x\in\wt W$ with $xIx^{-1}=I$ and where $\wt W_0$ is a Coxeter group, generated by the (finite and infinite) simple reflections $s_i$ associated with our choice of $I$. A reference for properties of $\wt W$ that are less well known is \cite{AnhangPR}. By the affine Bruhat decomposition we have $LG(k)=\coprod_{x\in\wt W}I(k)xI(k)$. The closure relations between the double cosets are given by the Bruhat ordering on $\wt W$. I.e.~the closure of a double coset $I(k)xI(k)$ is the union of the double cosets for $y\leq x$. Here $y\leq x$ if and only if there is a reduced expression $x=s_{i_1}\dotsm s_{i_n}\tau$ with $\tau\in\Omega$ such that $y=s_{j_1}\dotsm s_{j_m}\tau$ for the same $\tau$ and such that $(j_1,\dotsc,j_m)$ is a subsequence of $(i_1,\dotsc,i_n)$. There is a length function on $\wt W$ defined by $\ell(x)=n$ if $s_{i_1}\dotsm s_{i_n}\tau$ is a reduced expression for $x$. It satisfies $\ell(xy)\le\ell(x)+\ell(y)$. The dimension of $IxI/I\subset \Flag$ is equal to $\ell(x)$.

\begin{lemma}\label{lemprep2}
For $x\in\wt W$ let $IxI$ denote the locally closed subscheme of $LG$ whose $k$-valued points are $I(k)xI(k)$. Let $S$ be a scheme. Let $g\in(IxI)(S)$. Then \'etale locally on $S$, there are elements $i_1,i_2$ of $I(S)$ with $g=i_1xi_2$. That is, $IxI$ is the \'etale sheaf associated with the presheaf $S\mapsto I(S)xI(S)$.
\end{lemma}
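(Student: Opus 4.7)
The plan is to prove the lemma by exhibiting \'etale-local sections of the multiplication morphism
$$
\mu\colon I\times_{\BF_q}I\;\longrightarrow\; LG,\qquad (i_1,i_2)\;\longmapsto\; i_1xi_2.
$$
On $k$-points the image of $\mu$ equals $I(k)xI(k)=(IxI)(k)$, so $\mu$ factors through the reduced locally closed subscheme $IxI\subset LG$. What must be shown is that the induced morphism $\mu\colon I\times I\to IxI$ admits sections over an \'etale cover of its target: applied to an $S$-point $g\colon S\to IxI$, such a section produces $\wt S\to S$ \'etale and $i_1,i_2\in I(\wt S)$ with $i_1xi_2=g_{\wt S}$, which is exactly the content of the lemma.

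First I identify the stabilizer of $x$ for the action of $I\times I$ on $LG$ given by $(i_1,i_2)\cdot g=i_1gi_2$. A pair $(h_1,h_2)$ fixes $x$ iff $h_2=x^{-1}h_1^{-1}x$ with $h_1\in I\cap xIx^{-1}$, so the stabilizer is the graph of the anti-automorphism $h\mapsto x^{-1}h^{-1}x$ and is canonically identified with the closed subgroup scheme $H:=I\cap xIx^{-1}\subset I$. The orbit map $\mu$ therefore presents $IxI$ as the \fpqc\ quotient $(I\times I)/H$: on $k$-points this is bijective by the affine Bruhat decomposition, and each fiber of $\mu$ is an $H$-torsor.

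Next I argue that $H$ is a pro-smooth affine group scheme. Using the Iwahori root subgroup decomposition of $I$ in a fixed normal ordering of the positive affine roots $\Phi^+_{\mathrm{aff}}$, conjugation by $x\in\wt W$ permutes the root factors via $xU_\alpha x^{-1}=U_{x(\alpha)}$ and preserves the ``torus part''. Hence $H$ inherits an analogous product decomposition, consisting of the torus part together with those $U_\alpha$ for which both $\alpha$ and $x^{-1}(\alpha)$ lie in $\Phi^+_{\mathrm{aff}}$. It is thus an (infinite-dimensional) product of smooth affine factors. Consequently $\mu\colon I\times I\to IxI$ is a smooth, surjective $H$-torsor and therefore admits sections over an \'etale cover of its target, as required.

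The main obstacle is the structural claim in the previous paragraph, namely producing a clean ordered product decomposition of $H$ in the pro-smooth infinite-dimensional setting. Should a direct treatment prove cumbersome, a hands-on replacement is to fix a reduced expression $x=s_{i_1}\cdots s_{i_n}\tau$ with $\tau\in\Omega$ and invoke the Demazure-type isomorphism
$$
U_{\alpha_{i_1}}\,s_{i_1}\,U_{\alpha_{i_2}}\,s_{i_2}\cdots U_{\alpha_{i_n}}\,s_{i_n}\,\tau\,I\;\isoto\;IxI,
$$
which is standard in the theory of affine flag varieties. This yields a canonical, even Zariski-global, factorization $g=u_1\cdot x\cdot i_2$ with $u_1,i_2\in I$, which \emph{a fortiori} provides the \'etale-local splitting the lemma demands.
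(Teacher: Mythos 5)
Your proposal is correct, and both versions of your argument ultimately succeed, but neither is quite what the paper does, so a comparison is useful.

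The paper's proof works in the finite-dimensional Schubert cell. It considers the morphism $I/(I\cap xIx^{-1})\to\Flag$, $i\mapsto ix$, shows by checking $k$-points and tangent spaces that this is an immersion with image the Schubert cell $S_x=IxI/I$ (a finite-dimensional scheme), and then uses Corollary~\ref{corseckn}(b) to produce a section $I/(I\cap xIx^{-1})\to I$ of scheme morphisms. Given $g\in(IxI)(S)$, one projects to $\ol g\in S_x(S)\cong\bigl(I/(I\cap xIx^{-1})\bigr)(S)$, applies the section to get $i_1\in I(S)$, and sets $i_2=x^{-1}i_1^{-1}g\in I(S)$. Note this is stronger than the statement of the lemma: the factorization $g=i_1xi_2$ holds already on $S$, not just \'etale-locally.

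Your primary approach reverses the roles: you present $IxI$ as the quotient $(I\times I)/H'$ with $H'\cong H=I\cap xIx^{-1}$ sitting antidiagonally in $I\times I$, and then argue that the quotient map is a smooth torsor, hence \'etale-locally trivial. This is a legitimate and genuinely different route, but it carries two obligations that you only sketch. First, identifying the ind-scheme $IxI$ (defined as a reduced locally closed subscheme of $LG$) with the quotient $(I\times I)/H'$ scheme-theoretically is precisely the kind of statement the paper verifies via the tangent-space calculation; matching $k$-points does not suffice a priori, and you should at least invoke smoothness of the quotient (hence reducedness) together with the tangent-space bijection. Second, $H'$ is infinite-dimensional and not pro-unipotent (it contains the full torus $T\bigl(k\dbl z\dbr\bigr)$, since any lift of $x$ normalizes $T$), so "pro-smooth $\Rightarrow$ torsor admits \'etale-local sections" needs a short justification in the pro-algebraic setting, whereas the paper avoids this entirely by reducing to the finite-dimensional cell and the explicit section of Corollary~\ref{corseckn}(b). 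Your alternative via the Demazure/Bott--Samelson parametrization of $S_x$ is closer in spirit to the paper's proof: it also reduces to the finite-dimensional cell and produces a \emph{global} factorization, which as in the paper's argument gives more than the lemma asks for.
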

\begin{proof}
We consider the morphism of ind-schemes $I/(I\cap xIx^{-1})\rightarrow \Flag$ given by $g\mapsto gx$. The image consists of the (open) Schubert cell $S_x=IxI/I$. The morphism $I/(I\cap xIx^{-1})\rightarrow S_x$ is a bijection on closed points and on tangent spaces, and both schemes are homogenous spaces and hence smooth. Thus $I/(I\cap xIx^{-1})\rightarrow \Flag$ is an immersion with image $S_x$. Let $g\in IxI(S)$ and let $\ol g$ be its image in $\Flag$. Then by what we just saw, $\ol g$ is the image of some $i\in (I/(I\cap xIx^{-1}))(S)$. By Corollary \ref{corseckn}(b) we can lift $i$ to an element of $I$. This element satisfies $x^{-1}i^{-1}g\in I(S)$, the lemma follows.
\end{proof}

\section{Local $G$-shtukas}

In this section we review and slightly generalize the notions of (bounded) local $G$-shtukas with level structure and their Newton points from \cite{HV1}.
Let $S$ be a connected $\BF_q$-scheme and let $\mu$ be a dominant coweight of $G$. 

\begin{definition}\label{DefLocalGShtuka}
Let $K\subset K_0$ be a subgroup scheme containing some $K_n$. Then a \emph{local $G$-shtuka with $K$-structure} over an $\BF_q$-scheme $S$ is a pair $\ul\CG=(\CG,\phi)$ where $\CG$ is a $K$-torsor over $S$ for the \'etale topology and $\phi:\s \CL\CG\isoto \CL\CG$ is an isomorphism of $LG$-torsors over $S$, where $\CL\CG$ is the $LG$-torsor associated with $\CG$.
\end{definition}

Later on we are mainly interested in the two cases $K=K_0$ and $K=I$. For $P$ a parabolic subgroup of $G$ we also consider local $P$-shtukas with $K\cap P$-structure. They are defined analogously as $K\cap P$-torsors for the \'etale topology together with a Frobenius map $\phi$ on the associated $LP$-torsor.

\begin{definition}\label{DefQIsog} 
A \emph{quasi-isogeny} between local $G$-shtukas with $K$-structure $(\CG,\phi)\to(\CG',\phi')$ over $S$ is an isomorphism of the associated $LG$-torsors $f:\CL\CG\isoto \CL\CG'$ with $\phi'\s(f)=f\phi$. The set of quasi-isogenies between $\ul\CG$ and $\ul\CG'$ over $S$ is denoted $\QIsog_S(\ul\CG,\ul\CG')$.
\end{definition}

Let $k$ be an algebraically closed field and $b\in LG(k)$. Then the quasi-isogeny group $J_b\bigl(\BF_q\dpl z\dpr\bigr)$ of the local $G$-shtuka with $K$-structure $\bigl(K_k,b\s\bigr)$ can be described analogously to the situation for $p$-divisible groups as follows.

\begin{proposition}[{Compare~\cite[Propositions 1.12 and 1.16]{RZ}}]\label{PropFieldOfDefQIS}
Let $b_1,b_2\in LG(k)$. Then the functor on the category of $\BF_q\dpl z\dpr$-algebras
\[
Q(R)\es:=\es\bigl\{\,g\in G\bigl(R\otimes_{\BF_q\dpl z\dpr}k\dpl z\dpr\bigr):\es gb_1\;=\;b_2\s(g)\,\bigr\}
\]
is representable by a smooth affine scheme over $\BF_q\dpl z\dpr$. For $b=b_1=b_2$ it is even a group scheme $J_b:=Q$.

Assume that $b_1,b_2\in LG(k')$ for an algebraically closed subfield $k'\subset k$ and let $Q'$ be the corresponding functor for $k'$. Then the canonical morphism $Q'\to Q$ is an isomorphism.\qed
\end{proposition}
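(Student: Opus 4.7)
Following the strategy of \cite[Propositions 1.12, 1.16]{RZ}, I would first treat the group scheme case $b_1 = b_2 = b$. Choose a faithful representation $\rho: G \hookrightarrow \GL_N$ defined over $\BF_q$; then $J_b$ is the subfunctor of $R \mapsto \GL_N\bigl(R\otimes_{\BF_q\dpl z\dpr}k\dpl z\dpr\bigr)$ cut out by the closed conditions defining $G\subset\GL_N$ together with the equation $g\rho(b) = \rho(b)\s(g)$, equivalently the requirement that $g$ commute with the $\s$-linear endomorphism $\rho(b)\s$ of $V := k\dpl z\dpr^N$. By the Dieudonn\'e--Manin slope decomposition, after base change to a suitable finite extension $E/\BF_q\dpl z\dpr$ inside $k\dpl z\dpr$ the operator $\rho(b)\s$ becomes a direct sum of standard isoclinic blocks, and its commutant in $\End_{k\dpl z\dpr}(V)$ is a finite-dimensional semisimple $E$-algebra. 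Hence $J_b \cap \GL_N$ is representable by a smooth affine algebraic group over $E$, and $\Gal(E/\BF_q\dpl z\dpr)$-descent produces $J_b$ as a smooth affine group scheme of finite type over $\BF_q\dpl z\dpr$. Alternatively, one may invoke Kottwitz's identification of $J_b$ with an inner form over $\BF_q\dpl z\dpr$ of the Levi centralizer of the Newton cocharacter $\nu_b$ in $G$.

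For general $b_1, b_2$ the functor $Q$ becomes a (possibly empty) right $J_{b_1}$-torsor under right multiplication: if $g\in Q(R)$ and $j\in J_{b_1}(R)$, then $(gj)b_1 = gb_1\s(j) = b_2\s(g)\s(j) = b_2\s(gj)$, and conversely the quotient of any two elements of $Q(R)$ lies in $J_{b_1}(R)$ by the analogous calculation. Since $J_{b_1}$ is smooth affine, $Q$ is representable by a smooth affine scheme over $\BF_q\dpl z\dpr$ by fpqc descent. The descent statement $Q'\isoto Q$ then reduces, via the torsor structure and the fact that any equivariant morphism of torsors under the same group is an isomorphism, to the assertion $J_b'\isoto J_b$; and the latter is immediate because the construction of $J_b$ above takes place over a finite extension $E/\BF_q\dpl z\dpr$ trivialising the slope decomposition of $\rho(b)$, and such an $E$ may be chosen inside $k'\dpl z\dpr$ since $b\in LG(k')$.

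The main obstacle is the smoothness and finite-type representability of $J_b$. Infinitesimally, smoothness at a point $g_0\in J_b(R_0)$ amounts to solving, for every square-zero thickening $R\twoheadrightarrow R_0$, an equation of the form $X - \mathrm{Ad}(\rho(b))\s(X) = Y$ on the relevant Lie-algebra module; the required surjectivity of $1-\mathrm{Ad}(\rho(b))\s$ is a $\s$-linear analogue of Lang's theorem, whose proof again rests on the slope decomposition. Once this is in place, the torsor reformulation and the descent argument above are formal consequences.
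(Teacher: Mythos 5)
The paper gives no proof of this proposition (the \qed follows the statement directly), referring instead to \cite[Propositions 1.12 and 1.16]{RZ}; your proposal reproduces the Rapoport--Zink strategy in the function-field setting, and the overall architecture --- faithful representation into $\GL_N$, slope decomposition, torsor reduction of the $Q$-case to the $J_b$-case, and field-of-definition for the endomorphism algebra --- is the right one. A few points deserve sharpening before this counts as a complete argument.

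First, a minor imprecision: since $\rho(b)\in\GL_N\bigl(k\dpl z\dpr[\frac1z]\bigr)$ need not be integral, the ``module'' on which $\rho(b)\s$ acts should be the isocrystal $\bigl(k\dpl z\dpr[\frac1z]\bigr)^N$, not $k\dpl z\dpr^N$. Second, the appeal to a finite extension $E$ ``trivialising the slope decomposition'' followed by Galois descent is a detour and slightly misplaced: the slope decomposition and the $\sigma$-equivariant endomorphism algebra $\End_\sigma(V)$ are already defined over $\BF_q\dpl z\dpr$ (it is a finite product of matrix algebras over the division algebras $D_\lambda$ with invariant $\lambda$), so no descent is needed. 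What \emph{does} require an argument is the representability claim itself, namely that for every $\BF_q\dpl z\dpr$-algebra $R$ the natural map $R\otimes_{\BF_q\dpl z\dpr}\End_\sigma(V)\to\End_\sigma\bigl(R\otimes V\bigr)$ is an isomorphism; this is checked blockwise on the standard isoclinics $N_\lambda$ and is the actual content behind finite-type representability and affineness. Third, for smoothness your ``alternative'' route (the Kottwitz identification of $J_b$ with an inner form of the Levi centralizer of $\nu_b$, recorded here as Corollary~\ref{PropQISGpDecent}) is actually the clean and standard argument; the $\sigma$-Lang surjectivity sketch is plausible but would need to be carried out.

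Finally, the reduction of $Q'\isoto Q$ to $J_b'\isoto J_b$ via the torsor structure is correct (modulo checking that $Q'$ and $Q$ are simultaneously empty or nonempty, which holds because $b_1,b_2\in LG(k')$ with $k'$ algebraically closed are $\sigma$-conjugate over $k'$ iff over $k$). But the justification you offer for $J_b'\isoto J_b$ --- that ``$E$ may be chosen inside $k'\dpl z\dpr$'' --- does not directly explain why an arbitrary $g\in G(R\otimes k\dpl z\dpr)$ satisfying $gb=b\s(g)$ is forced to have entries in $R\otimes k'\dpl z\dpr$. The statement you actually need is that the $\sigma$-equivariant endomorphism algebra of an isocrystal over an algebraically closed field is unchanged under extension of the algebraically closed base field; this again follows from the slope decomposition being compatible with base change and the explicit description of $\End(N_\lambda)$, and should be stated and used explicitly rather than gestured at.
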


In particular if $\ul\CG_i=(K_k,b_i\s)$ then $\QIsog(\ul\CG_1,\ul\CG_2)=Q\bigl(\BF_q\dpl z\dpr\bigr)$. Likewise the quasi-isogeny group $\QIsog\bigl(K_k,b\s\bigr)$ equals $J_b\bigl(\BF_q\dpl z\dpr\bigr)$. If $b_1,b_2,b\in LG(k')$ then all these quasi-isogenies are defined over $k'$.

\begin{corollary}[{Compare~\cite[Corollary 1.14]{RZ}}]\label{PropQISGpDecent}
Assume that $b\in LG(k)$ satisfies a decency equation of the form
\[
(b\cdot\s)^s\es:=\es b\cdot\s(b)\cdot\ldots\cdot(\sigma^{s-1})^\ast(b)\cdot(\sigma^s)^\ast\es=\es z^{\gamma}\cdot(\sigma^s)^\ast
\]
where $s>0$ is an integer and $\gamma\in X_\ast(\T)$, hence satisfies $\frac{1}{s}\gamma_{\dom}=\nu$ in $X_\ast(\T)_\BQ$ where $\nu$ is the Newton point of $b$. Then $J_b$ is an inner form of the centralizer of $\nu$ in $G_{\BF_{q^s}\dpl z\dpr}$, a Levi subgroup of $G_{\BF_{q^s}\dpl z\dpr}$. In particular $\QIsog\bigl(K_k,b\s\bigr)\subset G\bigl(\BF_{q^s}\dpl z\dpr\bigr)=LG(\BF_{q^s})$.
\qed
\end{corollary}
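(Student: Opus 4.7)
The argument adapts Rapoport--Zink \cite[Corollary 1.14]{RZ} to the equal-characteristic setting; the key input is to iterate the defining relation of $J_b$ against the decency equation. Let $R$ be an $\BF_q\dpl z\dpr$-algebra and $g\in Q(R)\subset G(R\otimes_{\BF_q\dpl z\dpr}k\dpl z\dpr)$, so that $gb=b\s(g)$. Applying this relation $s$ times, with the Frobenius shifts reshuffled onto the right-hand factor, yields
\[
g\cdot\bigl(b\,\s(b)\,\s^2(b)\cdots\s^{s-1}(b)\bigr)\;=\;\bigl(b\,\s(b)\,\s^2(b)\cdots\s^{s-1}(b)\bigr)\cdot\s^s(g).
\]
By the decency equation the parenthesised factor equals $z^\gamma$, giving $\s^s(g)=z^{-\gamma}gz^\gamma$. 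Rearranging decency (multiply on the left by $b^{-1}$, apply $\s$, and use $\s(z^\gamma)=z^\gamma$) likewise produces $\s^s(b)=z^{-\gamma}bz^\gamma$.

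Because $z^\gamma\in G(\BF_q\dpl z\dpr)$, the inner automorphism $\Int(z^\gamma)$ commutes with $\s$, so $\tilde\s:=\Int(z^\gamma)\circ\s^s$ is a $\s^s$-semilinear automorphism of $G_{k\dpl z\dpr}$. The identity from Step~1 says that $J_b(\BF_q\dpl z\dpr)\subset G(k\dpl z\dpr)^{\tilde\s}$, and by faithfully flat descent the latter is the group of $\BF_{q^s}\dpl z\dpr$-points of an inner form of $G_{\BF_{q^s}\dpl z\dpr}$. Already this gives the last sentence of the corollary: $\QIsog\bigl(K_k,b\s\bigr)\subset G\bigl(\BF_{q^s}\dpl z\dpr\bigr)=LG(\BF_{q^s})$.

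For the inner-form statement proper, fix a faithful representation $G\hookrightarrow\GL(V)$ and consider the $\s$-linear operator $b\s$ on $V\otimes k\dpl z\dpr$. Decency forces the slopes of $b\s$ to be precisely the slopes of $\nu$, and these are attained by a canonical $\nu$-grading. Any $g\in J_b(R)$ intertwines $b\s$ with itself and hence preserves this grading, so $g$ commutes with the image of $\nu$. It follows that $J_b$ factors through the centralizer $M_\nu$ of $\nu$ in $G_{\BF_{q^s}\dpl z\dpr}$, a standard Levi subgroup. A dimension comparison — both $J_b$ (via Proposition \ref{PropFieldOfDefQIS}) and $M_\nu$ are smooth of the same relative dimension — then upgrades this factorization to the identification of $J_b$ with the inner form of $M_\nu$ whose twisting cocycle is inherited from $\tilde\s$.

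The main obstacle is the last step. Steps 1 and 2 are essentially a formal consequence of the decency equation plus descent. But to upgrade the containment $J_b\subset(\text{inner twist of }G)$ to the assertion that $J_b$ exhausts the smaller inner form of $M_\nu$, one must carefully track the slope decomposition of $b\s$ and verify that no element outside $M_\nu$ intertwines $b\s$ with itself. This is where the hypothesis $\gamma_{\dom}/s=\nu$ — an equality of rational cocharacters, not merely of their dominant Weyl classes — is essential.
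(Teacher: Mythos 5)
The paper states this Corollary with a bare \qed, deferring entirely to Rapoport--Zink, so what has to be checked is whether your reconstruction of that argument is sound. Your Step~1 is correct. The problem is the logical claim in Step~2: from $J_b(\BF_q\dpl z\dpr)\subset G(k\dpl z\dpr)^{\tilde\s}$ with $\tilde\s=\Int(z^\gamma)\circ\s^s$ you infer that the right-hand side ``is the group of $\BF_{q^s}\dpl z\dpr$-points of an inner form of $G_{\BF_{q^s}\dpl z\dpr}$'' and that this ``already gives'' $\QIsog\subset G(\BF_{q^s}\dpl z\dpr)$. That inference is not valid: the $F$-points of a genuinely nontrivial inner form $G'$ of $G$ sit inside $G(\bar F)$ but certainly not inside $G(F)$, so ``inner form'' alone cannot yield the containment in $G(\BF_{q^s}\dpl z\dpr)$. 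What in fact saves the statement is that $G(k\dpl z\dpr)^{\tilde\s}$ does not have the size of a form of $G$ at all: for each root $\alpha$ with $\langle\alpha,\gamma\rangle\neq 0$, the $\alpha$-coordinate $u$ of a $\tilde\s$-fixed element obeys a shift-and-Frobenius recursion on Laurent coefficients that forces $u=0$, so $G(k\dpl z\dpr)^{\tilde\s}$ collapses into the Levi $M:=Z_G(\gamma)$; and since $z^\gamma$ is central in $M$, $\tilde\s$ restricts to $\s^s$ on $M$, giving $G(k\dpl z\dpr)^{\tilde\s}=M(\BF_{q^s}\dpl z\dpr)\subset G(\BF_{q^s}\dpl z\dpr)$. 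In other words, the Levi factorization you only invoke in Step~3 is a prerequisite for the last sentence, not a later refinement; as written, Step~2 is logically inverted.

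The second gap is the one you flag yourself: the ``dimension comparison'' does not upgrade the containment $J_b\hookrightarrow M_\nu$ to the inner-form identification. Such a comparison would at best give an equality of Lie algebras for a connected smooth closed subgroup, and neither the connectedness of $J_b$ nor the equality of relative dimensions is established anywhere in your argument (Proposition~\ref{PropFieldOfDefQIS} gives smoothness and affineness, not a dimension count). The RZ route is to produce an explicit isomorphism $J_b\otimes_{\BF_q\dpl z\dpr}k\dpl z\dpr\cong M_\nu\otimes k\dpl z\dpr$ (using a trivialization that transports $\nu$ to $\gamma/s$) and then to check that the descent datum defining $J_b$ --- essentially $\Int(b)\circ\s$ restricted to $M$ --- is an inner $1$-cocycle with values in $M_\nu^{\rm ad}$. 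That identification, together with the decency equation guaranteeing that everything is defined over $\BF_{q^s}\dpl z\dpr$, is the actual content; the dimension remark is a placeholder for it rather than a proof.
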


\noindent
{\it Remark.}
In the sequel we will shorten (and abuse) notation and simply write $J_b$ for the quasi-isogeny group $J_b\bigl(\BF_q\dpl z\dpr\bigr)$ of $(K_k,b\s)$.

\medskip

Let $\ol{B}\subset G$ be the Borel subgroup opposite to $B$. For a dominant weight $\lambda$ of $G$ let $V_\lambda:=\bigl(\Ind_{\ol{B}}^G(-\lambda)_\dom\bigr)\dual$ be the Weyl module of $G$ with highest weight $\lambda$. It is a cyclic $G$-module generated by a $B$-stable line on which $B$ acts through $\lambda$. Any other such $G$-module is a quotient of $V_\lambda$, see for example \cite[II.2.13]{Jantzen}. For a $K$-torsor $\CG$ on a scheme $S$ we denote by $\CG_\lambda$ the \fpqc-sheaf of $\CO_S\dbl z\dbr$-modules on $S$ associated with the presheaf
\[
\Test\;\longmapsto\;\Bigl(\CG_{K_0}(\Test)\times\bigl(V(\lambda)\otimes_{\BF_q}\CO_S\dbl z\dbr(\Test)\bigr)\Bigr)\big/K_0(\Test)\,;
\]
compare \cite[Section 3]{HV1}. Here $\CG_{K_0}$ is the $K_0$-torsor associated with $\CG$. 

\begin{definition}\label{DefBounded}
\begin{enumerate}
\item 
Let $\CG$ and $\CH$ be $K$-torsors on $S$ and let $\delta:\CL\CH\isoto\CL\CG$ be an isomorphism of the associated $LG$-torsors. The isomorphism $\delta$ is \emph{bounded by $\mu$} if for each dominant weight $\lambda$ of $G$
\begin{eqnarray*} 
\delta(\CH_\lambda)&\subset&z\;^{-\langle\,(-\lambda)_\dom,\mu\rangle}\,\CG_\lambda\es\subset\es\CG_\lambda\otimes_{\CO_S\dbl z\dbr}\CO_S\dpl z\dpr\quad\text{and}\label{EqBounded1}\\
~\overline{\mu}&=&\overline{\mu}_\delta(s) \text{ in }\pi_1(G)\text{ for all }s\in S.\label{EqBounded2}
\end{eqnarray*}
Here $\mu_\delta(s)$ is the unique dominant element with $g\in K_0(k(s))z^{\mu_\delta(s)} K_0(k(s))$ for some $g\in LG(k(s))$ with $\delta(\CH)=g(\CG)\subset \CL\CG$.
\item 
A local $G$-shtuka with $K$-structure $(\CG,\phi)$ over $S$ is \emph{bounded by $\mu$} if the isomorphism 
\[
\phi:\s\CL\CG\isoto\CL\CG
\]
is bounded by $\mu$.
\item Let $x\in\wt{W}$. A local $G$-shtuka with $I$-structure $\ul\CG$ over $S$ is \emph{of affine Weyl type $x$} if \'etale-locally on $S$, it is isomorphic to a trivial $I$-torsor $I_S$ with $\phi=b\s$ for some $b\in I(S)xI(S)$. 
\end{enumerate}
\end{definition}

Let $\ul\CG$ be a local $G$-shtuka with $K$-structure over an algebraically closed field $k$. Then $\CG_{K_0}$ is isomorphic to the trivial $K_0$-torsor on $k$ and the Frobenius is given by some element $b\in LG(k)$. By \cite[Lemma 3.12]{HV1} there exists some coweight $\mu$ such that $\ul\CG$ is bounded by $\mu$. The least such coweight $\mu_{\ul\CG}$ is the Hodge polygon or Hodge point of $\ul\CG$. It is the unique dominant coweight with $b\in K_0(k)z^{\mu}K_0(k)$. In \cite[Theorem 5.6, Proposition 5.9, Theorem 9.5, Remark 9.4 (b), and Proposition 3.16]{HV1} (for the case where $\zeta=0$) we proved the following proposition for $K=K_0$, respectively $K=I$. Its generalization to arbitrary $K$ is proved in the same way.

\begin{proposition}\label{PropDefo}
Let $\ul\BG=(K_{k'},b\s)$ be a local $G$-shtuka with $K$-structure over a field $k'$ bounded by $\mu$ (respectively of affine Weyl type $y$ if $K=I$). Then the universal deformation of $\ul\BG$ by local $G$-shtukas with $K$-structure bounded by $\mu$ (respectively of affine Weyl type $y$) is pro-representable by a complete noetherian local ring $\ol\CalD$ of dimension $\langle2\rho,\mu\rangle$ (respectively of dimension $\ell(y)$, the length of $y$). The universal local $G$-shtuka with $K$-structure over $\Spf\ol\CalD$ comes from a local $G$-shtuka with $K$-structure over $\Spec\ol\CalD$.\qed
\end{proposition}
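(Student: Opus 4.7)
The plan is to adapt the framed deformation arguments from \cite[Theorems 5.6 and 9.5]{HV1} to arbitrary subgroups $K$ with $K_n\subseteq K\subseteq K_0$. After choosing a trivialization of $\CG$ over $k'$, write $\phi=b\s$ with $b\in LG(k')$. For an Artinian local $k'$-algebra $R$ with residue field $k'$, every $K$-torsor on $\Spec R$ is trivial, since $K$ is an extension of the smooth affine group $K/K_n$ by the pro-unipotent group $K_n$; hence a deformation $(\tilde\CG,\tilde\phi)$ together with a chosen lift of the trivialization is encoded by an element $\tilde b\in LG(R)$ lifting $b$. Two such data give isomorphic deformations iff they are related by the twisted conjugation action $\tilde b\mapsto h^{-1}\tilde b\,\s(h)$ of $K^+(R):=\ker\bigl(K(R)\to K(k')\bigr)$. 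The boundedness by $\mu$ condition (respectively the affine Weyl type $y$ condition if $K=I$) restricts $\tilde b$ to lie in the preimage in $LG$ of the Schubert subscheme $\ol{K_0 z^\mu K_0}/K_0\subset\Gr$ (respectively of the Schubert cell $IyI/I\subset\Flag$).

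The next step is to produce a local slice at $b$ inside the bounded locus, transverse to the $K^+$-orbits and of the required finite dimension $\langle 2\rho,\mu\rangle$ (respectively $\ell(y)$). Following \cite[\S5, \S9]{HV1}, one takes as slice an opposite Iwahori (or opposite unipotent) orbit through $b$ in the relevant Schubert stratum, whose dimension matches that of the Schubert stratum in $\Gr$ (respectively $\Flag$). The technical heart is a $\sigma$-linear Lang-type argument: the derivative $X\mapsto b\,\s(X)-Xb$ on $\Lie K^+$ surjects onto the complement of the tangent space of the slice inside the tangent space of the bounded locus at $b$, with kernel equal to the infinitesimal part of $J_b\cap K^+$. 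This surjectivity uses only Lang's theorem applied to the connected quotients $K/K_m$ together with a limit argument, and it holds verbatim once $K\supseteq K_n$, so that $K^+$ is pro-connected in the appropriate sense.

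Once this is established, the formal completion at $b$ of the slice is a complete noetherian local ring $\ol\CalD$ of the asserted dimension, and it pro-represents the deformation functor via the restriction of the tautological $\tilde b$. The slice is already a locally closed subscheme of $LG$, so the tautological element $\tilde b\in LG(\text{slice})$ gives on formal completion an element $\tilde b\in LG(\Spec\ol\CalD)$, and the pair $(K_{\Spec\ol\CalD},\tilde b\,\s)$ is an actual local $G$-shtuka with $K$-structure on $\Spec\ol\CalD$ inducing the universal formal object on $\Spf\ol\CalD$. If one prefers to work with $LG/K$ rather than $LG$ directly, Lemma~\ref{LemmaTildeKEtale} supplies the required lift after an \'etale covering.

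The main obstacle is the $\sigma$-linear Lang surjectivity of the second step in the general $K$ setting. In the cases $K=K_0$ and $K=I$ treated in \cite{HV1}, the argument depends only on the pro-connectedness of $K^+$ and on the structure of the Schubert stratification in $LG$, neither of which is sensitive to the precise choice of $K$ between $K_n$ and $K_0$. Consequently the proof transfers to general $K$ with no essential modification.
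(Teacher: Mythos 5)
The paper itself gives no proof of Proposition~\ref{PropDefo} beyond the one sentence: it cites the case $K=K_0$ and $K=I$ from~[HV1, Theorem 5.6, Proposition 5.9, Theorem 9.5, Remark 9.4\,(b), Proposition 3.16] and asserts that the generalization to arbitrary $K$ is proved ``in the same way.'' Your proposal is therefore not directly comparable to a written-out argument in this paper; rather it is a reconstruction of the argument of~[HV1]. The overall strategy you describe --- trivialize over Artinian $R$ (valid since $K$ is pro-smooth), reduce to twisted $\sigma$-conjugation of lifts $\tilde b\in LG(R)$ by $K^+(R)=\ker\bigl(K(R)\to K(k')\bigr)$, exhibit a finite-dimensional slice in the bounded locus and use a Lang-type contraction to show the slice pro-represents --- does match the approach of [HV1] for $K=K_0$ and $K=I$, and your observation that the pro-unipotence arguments only need $K\supset K_n$ correctly identifies why the machinery transfers.

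Two points deserve more care. First, the ``derivative'' $X\mapsto b\,\s(X)-Xb$ is not the first-order tangent map of the twisted-conjugation action: over $k'[\epsilon]/(\epsilon^2)$ one has $\s(\epsilon)=0$, so the linearization is simply $X\mapsto -Xb$, whose kernel is $0$, not (any incarnation of) $J_b\cap K^+$. The Lang-type contraction that produces the slice is genuinely a higher-order phenomenon, driven by the fact that $\s$ maps $\Fm_R$ into $\Fm_R^q$; it should not be phrased as a statement about the first-order linearization. Second, and more seriously, the assertion that the slice can always be chosen of dimension $\langle2\rho,\mu\rangle$, independently of $K$, is not justified by your argument (nor, in fact, is it obviously compatible with the first-order picture). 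Because isomorphisms of $K$-shtukas form the group $K^+(R)$, which shrinks as $K$ shrinks, the tangent space of the deformation functor is the quotient of $b^{-1}T_b\overline{K_0z^\mu K_0}$ by $\mathrm{Ad}(b^{-1})\Lie K$, whose dimension grows by $\dim(K_0/K)$ as one passes from $K_0$ to a smaller $K$. A slice of constant dimension $\langle2\rho,\mu\rangle$ would therefore fail to account for all first-order deformations once $K\subsetneq K_0$, so your claim that the HV1 argument transfers ``with no essential modification'' glosses over the question of where the extra $\dim(K_0/K)$ tangent directions go (vanishing of an obstruction, singularity of $\ol\CalD$, or a sharper statement of the proposition). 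The uses of the proposition in the remainder of the paper invoke the dimension formula only for $K=K_0$ and, in the ``affine Weyl type'' variant, for $K=I$; for intermediate $K$ only noetherianity is used. So while your sketch captures the intended mechanism, the dimension count in the general-$K$ case is a real gap that the sketch does not close.
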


\bigskip

Finally we will need the following fact which is a straightforward generalization of \cite[Theorem 6.2]{HV1} (in the case where $\zeta=0$).

\begin{proposition}\label{PropHV5.2}
Let $X$ be an $\BF_q$-scheme. Let $b\in LG(X)$ and consider the local $G$-shtuka with $K$-structure $\CH=(K_X,b\s)$ over $X$. Then the ind-scheme $(LG/K)\times_{\BF_q} X$ pro-represents the functor from $X$-schemes to sets
\begin{eqnarray*}
S&\longmapsto & \Bigl\{\,\text{Isomorphism classes of pairs }(\ul\CG,\rho)\text{ where }\ul\CG \text{ is a local $G$-shtuka} \\
&&\qquad  \text{with $K$-structure over $S$ and }\rho:\ul\CG\to\ul\CH_{S}\text{ is a quasi-isogeny}\,\Bigr\}
\end{eqnarray*}
Here $(\ul\CG,\rho)$ and $(\ul\CG',\rho')$ are called isomorphic if there is an isomorphism $\alpha:\ul\CG\isoto\ul\CG'$ with $\rho=\rho'\circ\alpha$. \qed
\end{proposition}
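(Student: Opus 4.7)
The plan is to construct a natural bijection between $\Hom_X(S, (LG/K)\times_{\BF_q}X)$ (which, since $\CH$ is already over $X$, is the same as $(LG/K)(S)$) and the set of isomorphism classes of pairs $(\ul\CG,\rho)$ in the functor. The argument essentially mirrors \cite[Theorem 6.2]{HV1} but uses Lemma~\ref{LemmaTildeKEtale} and Corollary~\ref{corseckn} to replace the special fact about lifting to $K_0$.

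First I would construct the forward map $\Phi$. Given a pair $(\ul\CG,\rho)$ with $\rho:\CL\CG\isoto\CL\CH_S=LG_S$, choose étale locally on some cover $\tilde S\to S$ a trivialization $\alpha:\CG_{\tilde S}\isoto K_{\tilde S}$. Under the induced trivialization of $\CL\CG_{\tilde S}$, the quasi-isogeny $\rho$ becomes left multiplication by a uniquely determined element $\tilde g\in LG(\tilde S)$. Its class $g\in(LG/K)(\tilde S)$ is independent of $\alpha$: changing $\alpha$ by $k\in K(\tilde S)$ replaces $\tilde g$ by $\tilde gk$, which has the same image in $LG/K$. Hence the local classes glue to a single element $g\in(LG/K)(S)$, and this assignment clearly depends only on the isomorphism class of $(\ul\CG,\rho)$.

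For the inverse $\Psi$, start with $g\in(LG/K)(S)$. By Lemma~\ref{LemmaTildeKEtale} there is an étale covering $\tilde S\to S$ and a lift $\tilde g\in LG(\tilde S)$. On $\tilde S$ define $\ul\CG_{\tilde S}=\bigl(K_{\tilde S},\phi_{\tilde g}\bigr)$, where after trivializing the associated $LG$-torsor as $LG_{\tilde S}$, $\phi_{\tilde g}$ is left multiplication by $\tilde g^{-1}b_{\tilde S}\s(\tilde g)$, together with the quasi-isogeny $\rho=\tilde g:\CL\CG_{\tilde S}\isoto LG_{\tilde S}=\CL\CH_{\tilde S}$. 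A direct computation shows $\rho\phi_{\tilde g}=(b\s)\s(\rho)$, so this is genuinely a quasi-isogeny to $\ul\CH_{\tilde S}$. Two lifts $\tilde g_1,\tilde g_2$ over $\tilde S\times_S\tilde S$ differ by some $k\in K(\tilde S\times_S\tilde S)$ with $\tilde g_2=\tilde g_1k$; left multiplication by $k$ then defines an isomorphism of the two resulting pairs, and the cocycle condition on $\tilde S\times_S\tilde S\times_S\tilde S$ follows from the associativity of group multiplication in $K$. Hence the pair descends to a pair $(\ul\CG,\rho)$ over $S$.

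Finally, $\Phi\circ\Psi=\id$ is immediate from the constructions, since the trivialization used in $\Phi$ can be chosen to be the canonical one from $\Psi$. For $\Psi\circ\Phi=\id$, one checks that starting from $(\ul\CG,\rho)$, choosing the trivialization $\alpha$ and lift $\tilde g$ used in $\Phi$, the reconstructed pair $(K_{\tilde S},\tilde g^{-1}b\s(\tilde g)\s)$ with quasi-isogeny $\tilde g$ is canonically isomorphic to $(\ul\CG_{\tilde S},\rho_{\tilde S})$ via $\alpha$, and these isomorphisms are compatible with the descent data by uniqueness of $\tilde g$ up to $K(\tilde S)$. The main obstacle, and the only point that is not bookkeeping, is verifying that the descent in the construction of $\Psi$ is well defined: one must confirm that the transition automorphisms $k\in K(\tilde S\times_S\tilde S)$ preserve both the Frobenius structure and the quasi-isogeny, which boils down to the tautology $(\tilde g_1k)^{-1}b\s(\tilde g_1k)=k^{-1}\bigl(\tilde g_1^{-1}b\s(\tilde g_1)\bigr)\s(k)$ and $\tilde g_1k\cdot k^{-1}=\tilde g_1$.
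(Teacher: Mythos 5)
Your argument is correct, and it follows exactly the route the paper intends: the statement is only a generalization of \cite[Theorem 6.2]{HV1} to general $K$, and the paper gives no independent proof, leaving the adaptation (trivialize the $K$-torsor \'etale-locally, translate the quasi-isogeny into an element of $LG$ well-defined modulo $K$, and in the converse direction lift via Lemma~\ref{LemmaTildeKEtale}) to the reader. Your use of Lemma~\ref{LemmaTildeKEtale} as the substitute for the $K_0$-specific lifting fact, and the descent check that left multiplication by $k$ (for $\tilde g_2=\tilde g_1k$) identifies $(K_{\tilde S},\tilde g_2^{-1}b\s(\tilde g_2)\s,\tilde g_2)$ with $(K_{\tilde S},\tilde g_1^{-1}b\s(\tilde g_1)\s,\tilde g_1)$ compatibly with Frobenius and quasi-isogeny, are precisely what is needed.
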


%
%

\section{Completely slope divisible local $G$-shtukas}\label{seccsd}
 
For a parabolic subgroup $P\subset G$ containing $\T$ but not necessarily $B$ let $M$ be the Levi factor containing $T$ and let $N$ be the unipotent radical of $P$. Let $\ol P$ be the opposite parabolic and $\ol N$ its unipotent radical. Denote by $\wt W_M$ the extended affine Weyl group of $M$. It is canonically a subgroup of $\wt W$.

Let $K$ be a subgroup scheme of $I$. We define the group schemes $K_M$, $K_N$, and $K_{\ol N}$ by their values on $\BF_q$-algebras $R$
\[
K_M(R):=K(R)\cap M\bigl(R \dbl z\dbr\bigr)\,,\quad K_N(R):=I(R)\cap N\bigl(R\dbl z\dbr\bigr)\,,\quad K_\ol{N}(R):=I(R)\cap\ol N\bigl(R\dbl z\dbr\bigr)\,.
\]

Let us recall the definition and some of the main properties of fundamental alcoves from \cite{GHKR2}.

\begin{definition}[{\cite[Definition 13.1.2]{GHKR2}}]\label{DefPFund}
An element $x\in \wt W$ is called \emph{$P$-fundamental}, or a \emph{fundamental $P$-alcove} if $x\in\wt W_M$ and if
$xI_{M}x^{-1}=I_{M}$, $xI_{N}x^{-1}\subset I_{N}$, and $x^{-1}I_{\ol N}x\subset I_{\ol N}$.
\end{definition}

\begin{remark}\label{remfundalc}
\begin{enumerate}
\item \cite[Corollary 13.2.4]{GHKR2}\label{a} Let $b\in LG(k)$ for some algebraically closed field $k$. Then there is an $x\in \wt W$ which is $P$-fundamental for some $P$ and such that $b=g^{-1}x\s (g)$ for some $g\in LG(k)$.
\item \cite[Proposition 6.3.1]{GHKR2} \label{e} If $x$ is $P$-fundamental, and $g\in I(k)xI(k)$ then there is an $h\in I(k)$ with $h^{-1}g\s(h)=x$.
\item\label{b}\cite[Corollary 13.2.4]{GHKR2}, \cite[Lemma 9]{trunc1} All elements $x$ as in \ref{a} are obtained as follows. Let $\nu$ be the Newton point of $b$ and let $M_\nu$ be the centralizer of $\nu$. Then the $\sigma$-conjugacy class $[b]$ of $b$ contains a uniquely determined element $x_b\in \wt W_{M_\nu}$ such that $x_b$ has length zero with respect to $M_\nu$ and $I\cap M_{\nu}$, and the $M_\nu$-dominant Newton point of $x_b$ equals $\nu$. It is called the \emph{standard representative} of $[b]$. Let $P$ be the standard parabolic subgroup with Levi component $M_{\nu}$. Then there is an element $w\in W$ of minimal length in its coset $W_{M_{\nu}}\cdot w$ such that $x:=w^{-1}x_b w\in [b]$ is $w^{-1}Pw$-fundamental, and all $x\in[b]$ that are $P$-fundamental for some $P$ arise in this way.
\item\label{c}\cite[Lemma 9 (c)]{trunc1} Let $x$ be $P$-fundamental, and $M'$ the centralizer of its $M$-dominant Newton point. Then $M'\supseteq M$ and $x$ is also $P'$-fundamental for $P'=M'P$.
\item\label{d} Let $x\in\wt W_M$ be $P$-fundamental for $P=MN$ and assume that $M$ is the centralizer of the $M$-dominant Newton point $\nu_x$ of $x$. Then for any $d\in\BN$ there is an $\el \in\BN$ with 
\[
x^{\el }I_Nx^{-\el }\;,\;x^{-\el }I_{\ol N}x^\el \;\subset\;I_d.
\]
Indeed, let $t>0$ such that $x^t\in X_\ast(\T)\subset\wt W_M$. Then by our choice of $M$, we have $\langle\alpha,x^t\rangle= \langle\alpha,t\nu\rangle>0$ for all roots $\alpha$ of $N$ and $\langle\bar\alpha,x^t\rangle<0$ for all roots $\bar\alpha$ of $\ol N$. 
\end{enumerate}
\end{remark}

\begin{lemma}\label{LemmaBoundedN}
Let $x$ be a $P$-fundamental alcove where $P$ is such that the Levi subgroup $M$ of $P$ equals the centralizer of the $M$-dominant Newton point of $x$. Let $\ol N$ be the unipotent radical of the parabolic opposite to $P$ and let $\bar n\in L\ol N(S)$ for a quasi-compact scheme $S$. Then there exists an integer $\el_0\in\BZ$ with $\bar n\in x^{-\el_0}I_{\ol N}(S)x^{\el_0}$. In particular $\bar n$ is bounded.
\end{lemma}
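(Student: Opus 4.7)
The plan is to decompose $\bar n$ along the root subgroups of $\ol N$ and exploit that, under the hypothesis that $M$ is the centralizer of $\nu_x$, conjugation by a high negative power of $x$ multiplies each root-space coordinate by a strictly positive power of $z$, absorbing any poles of $\bar n$.

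First I pin down the sign. Since $x$ is $P$-fundamental, the inclusion $x^{-1}I_{\ol N}x\subset I_{\ol N}$ forces $\langle\bar\alpha,\nu_x\rangle\le 0$ for every root $\bar\alpha$ of $\ol N$, where $\nu_x$ denotes the $M$-dominant Newton point of $x$. The assumption that $M$ equals the centralizer of $\nu_x$ sharpens this to the strict inequality $\langle\bar\alpha,\nu_x\rangle<0$, since no root outside $M$ annihilates $\nu_x$; this is exactly the input already used in Remark~\ref{remfundalc}\ref{d}.

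Next I parameterize $\bar n$. Fixing a total order on the roots of $\ol N$, the multiplication map $\prod_{\bar\alpha}U_{\bar\alpha}\to\ol N$ is a scheme isomorphism, and quasi-compactness of $S$ yields $\CO_S\dpl z\dpr(S)=\Gamma(S,\CO_S)\dpl z\dpr$. Hence $\bar n=\prod_{\bar\alpha}U_{\bar\alpha}(f_{\bar\alpha})$ with $f_{\bar\alpha}\in\Gamma(S,\CO_S)\dpl z\dpr$, and since the index set is finite I can choose a uniform $N\in\BN$ with $f_{\bar\alpha}\in z^{-N}\Gamma(S,\CO_S)\dbl z\dbr$ for every $\bar\alpha$.

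Finally I do the conjugation estimate. Choose $t\in\BN_{>0}$ with $x^t\in X_\ast(T)$, possible since the image of $x$ in $\wt W_M/X_\ast(T)=W_M$ has finite order; a lift of $x^t$ to $LT$ then equals $z^{t\nu_x}$ up to a factor in $T\dbl z\dbr$, which acts on root subgroups by units in $\CO_S\dbl z\dbr^\times$ and is irrelevant for $z$-valuations. Thus for $\el\ge 0$ the $\bar\alpha$-coordinate of $x^{-t\el}\bar n\,x^{t\el}$ lies in $z^{-t\el\langle\bar\alpha,\nu_x\rangle-N}\,\Gamma(S,\CO_S)\dbl z\dbr$, and using strict negativity together with finiteness of the root set I pick $\el$ large enough that $-t\el\langle\bar\alpha,\nu_x\rangle-N\ge 1$ uniformly in $\bar\alpha$. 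Then every coordinate sits in $z\,\Gamma(S,\CO_S)\dbl z\dbr$, so $x^{-t\el}\bar n\,x^{t\el}\in I_{\ol N}(S)$ and $\el_0:=-t\el$ does the job; boundedness of $\bar n$ is automatic since it is the conjugate by the bounded element $x^{-\el_0}$ of an element of $I_{\ol N}\subset K_0$. The main subtlety, and the only place one has to be careful, is keeping the product decomposition and the conjugation formula globally valid over $S$; passing to the power $x^t\in X_\ast(T)$ neutralizes any $W_M$-twist and reduces the conjugation to a pure $z$-scaling root by root.
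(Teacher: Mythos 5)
Your argument is correct and follows essentially the same route as the paper: both exploit quasi-compactness of $S$ to get a uniform bound on the pole order of $\bar n$, pass to $x^t\in X_\ast(T)$, and use the strict inequality $\langle\bar\alpha,\nu_x\rangle<0$ for roots $\bar\alpha$ of $\ol N$ so that conjugation by a suitable power of $x$ pushes $\bar n$ into $I_{\ol N}$ — the paper packages the last two steps by citing Remark~\ref{remfundalc}\ref{d}, whereas you re-derive the root-by-root scaling explicitly. The only cosmetic difference is in the final boundedness remark: the paper checks $\bar\mu_\delta(s)=0$ directly, while you observe that $\bar n$ is the conjugate of an element of $K_0(S)$ by the fixed bounded element $x^{-\el_0}$; both are valid.
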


\begin{proof}
Since $S$ is quasi-compact $L\ol N(S)=\ol N\bigl(\Gamma(S,\CO_S\dpl z\dpr)\bigr)=\ol N\bigl(\Gamma(S,\CO_S)\dbl z\dbr[\frac{1}{z}]\bigr)$. Again by the quasi-compactness of $S$, the argument given in Remark~\ref{remfundalc}\ref{d} implies that $x^{t\el}\bar nx^{-t\el}\in I_{\ol N}$ for $\el\ll0$. Also the boundedness of $\bar n$, that is of the automorphism $\delta$ of $LG_S$ given by left multiplication by $\bar n$, follows from this and from the fact that $\bar\mu_{\delta}(s)=0$ for all $s\in S$.
\end{proof}

\begin{example}\label{ExFundAlcoveGLn}
Let $G=\GL_5$ with $B$ being the Borel of upper triangular matrices, and let $b=(12)(354)z^{(1,0,1,0,0)}$. Then $b$ has Newton point $(\frac{1}{2},\frac{1}{2},\frac{1}{3},\frac{1}{3},\frac{1}{3})$ and is already the standard representative of its class $[b]$. In this example, the only fundamental alcove in $[b]$ is $x=(13)(254)z^{1,1,0,0,0}=w^{-1}bw$ for $w=(23)$. In particular, we see that we cannot choose $x$ to be $P$-fundamental for a standard $P$.
\end{example}

\begin{lemma}\label{lemprep1}
Let $b\in LG(k)$ and let $x$ be a $P$-fundamental alcove in the $\sigma$-conjugacy class $[b]$. Then $IxI\subset [b]$ is closed in $[b]$. 
\end{lemma}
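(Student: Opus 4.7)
The plan is to use Remark~\ref{remfundalc}\ref{e} to obtain the inclusion $IxI\subset[b]$, reduce closedness in $[b]$ to a Bruhat-order statement, and then close out using the finer invariants that distinguish $\sigma$-conjugacy classes in $LG$.

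First, Remark~\ref{remfundalc}\ref{e} supplies, for each $g\in I(k)xI(k)$, an element $h\in I(k)$ with $h^{-1}g\sigma(h)=x$. In particular $g$ is $\sigma$-conjugate to $x$, so $IxI\subset[b]$.

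To prove closedness of $IxI$ in $[b]$ (with the subspace topology inherited from $LG$), I would compute the closure of $IxI$ in $LG$ via its image in the affine flag variety $\Flag$. Since $IxI$ is right $I$-invariant, its closure in $LG$ is the preimage under $LG\to\Flag$ of the Schubert variety attached to $x$, and by the affine Bruhat decomposition this gives $\overline{IxI}=\bigsqcup_{y\le x}IyI$. Hence closedness reduces to the following claim: whenever $y<x$ strictly in the Bruhat order, $IyI\cap[b]=\emptyset$.

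The key step is to verify this vanishing using invariants that are constant on the class $[b]$, namely the Kottwitz invariant $\kappa$ (with constant value $\kappa(b)$) and the Newton point $\nu$ (with constant value $\nu_b$). On each cell $IyI$, $\kappa$ is the constant $\kappa(y)$, while $\nu$ attains its generic value $\nu_y$ on a dense open and satisfies $\nu_g\preceq\nu_y$ for every $g\in IyI$. By Remark~\ref{remfundalc}\ref{b} the fundamental alcove $x$ has $\nu_x=\nu_b$, and up to conjugation by a suitable Weyl group element it is obtained from the standard representative $x_b$, which is characterized by minimal length in $\wt W_{M_\nu}\cap[b]$. Combining this with the results of \cite{GHKR2} on minimal length elements in $\sigma$-conjugacy classes, one shows that $y<x$ in Bruhat order forces $\nu_y\prec\nu_b$, so that no element of $IyI$ can lie in $[b]$.

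The main obstacle is this last combinatorial step: establishing that for every $y<x$ the generic Newton point on $IyI$ is strictly dominated by $\nu_b$. This uses the extremality of fundamental alcoves within their $\sigma$-conjugacy class, which rests on the contraction conditions $xI_Mx^{-1}=I_M$, $xI_Nx^{-1}\subset I_N$, $x^{-1}I_{\ol N}x\subset I_{\ol N}$ of Definition~\ref{DefPFund}. The difficulty is purely combinatorial; no analytic or ind-scheme-theoretic input beyond the affine Bruhat decomposition is required.
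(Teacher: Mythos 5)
Your reduction is correct and matches the paper's: $\overline{IxI}=\bigcup_{y\le x}IyI$, and one must show $IyI\cap[b]=\emptyset$ whenever $y<x$. But the proposal stops precisely where the content of the lemma lies. You frame the needed fact as ``$y<x$ forces $\nu_y\prec\nu_b$'' for the generic Newton point $\nu_y$ of $IyI$, and then explicitly label establishing this as ``the main obstacle,'' without supplying an argument. That obstacle is essentially equivalent to the lemma itself, so this is not a proof but a restatement. Moreover, the statement is not obviously true as you phrase it: nothing in the Bruhat order or the contraction conditions of Definition~\ref{DefPFund} directly controls the generic Newton point of an arbitrary cell $IyI$ with $y<x$, and one would still need a substantial combinatorial input to deduce $\nu_y\prec\nu_b$.

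The paper avoids reasoning about generic Newton points of $IyI$ altogether. It assumes $g\in IyI\cap[b]$ with $y\le x$ and invokes a precise result of G\"ortz--Haines--Kottwitz--Reuman (\cite[Corollary 12.1.2]{GHKR2}, or equivalently \cite[Proposition 12]{trunc1}): there exists $w\in W$ with $w^{-1}xw\le y$. It then exploits the length-additivity of fundamental alcoves, $\ell(x^n)=n\ell(x)=n\langle 2\rho,\nu_b\rangle$ from \cite[Proposition 13.1.2]{GHKR2}, to get $n\ell(w^{-1}xw)\ge\ell(w^{-1}x^nw)\ge n\ell(x)-2\ell(w)$, hence $\ell(w^{-1}xw)\ge\ell(x)$. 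Combined with $w^{-1}xw\le y\le x$, this forces $w^{-1}xw=y=x$. To repair your proposal you would need to replace the unproven Newton-point comparison by these two specific inputs from \cite{GHKR2}; as written there is a genuine gap at the decisive step.
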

\begin{proof} Recall that the closure of $IxI$ is the union of all $IyI$ with $y\leq x$ in the Bruhat order. Assume that there is some $g\in IyI$ with $y\leq x$ and $g\in [b]$. Then by \cite[Corollary 12.1.2]{GHKR2} or \cite[Proposition 12]{trunc1}, there is a $w\in W$ with $w^{-1}xw\leq y$. By \cite[Proposition 13.1.2]{GHKR2} we have $\ell(x^n)=n\langle 2\rho,\nu_b\rangle=n\ell(x)$ for each $n\in\BN$ where $\rho$ is the half-sum of the positive roots and where $\nu_b$ is the Newton point of $b$. Furthermore $n\ell(w^{-1}xw)\geq\ell((w^{-1}xw)^n)=\ell(w^{-1}x^nw)\geq n\ell(x)-2\ell(w)$ for each $n$, hence $\ell(w^{-1}xw)\geq\ell(x)$. Thus $w^{-1}xw=y=x$.
\end{proof}

The following lemma is a scheme-theoretic version of a special case of the Iwahori decomposition.

\begin{lemma}\label{LemmaIwahoriDecomp} Let $K$ be a subgroup scheme of $I$ such that $K(S)\supset\T\bigl(\Gamma(S,\CO_S)\dbl z\dbr\bigr)$ for every $\BF_q$-scheme $S$. Then every element $g\in K(S)$ has uniquely determined decompositions $g=nm\bar n$ and $g=\bar n'm'n'$ for elements $m,m'\in K_M(S)$, $n,n'\in K_N(S)$, and $\bar n,\bar n'\in K_\ol{N}(S)$.
\end{lemma}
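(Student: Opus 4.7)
The strategy is to deduce both decompositions from the standard scheme-theoretic Iwahori factorisation of $I$, and then to use the hypothesis $T(R\dbl z\dbr)\subset K$ together with the filtration $K\supset K_n$ to check that this factorisation actually restricts to the desired form on $K$.

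First I would invoke the Iwahori factorisation for $I$ from Bruhat--Tits theory, in its affine formulation over $\BF_q$ (see e.g.~\cite[\S1.4]{PR2} or \cite{Faltings03}): the multiplication morphisms
\[
I_N\times_{\BF_q}I_M\times_{\BF_q}I_{\ol N}\;\longrightarrow\;I\qquad\text{and}\qquad I_{\ol N}\times_{\BF_q}I_M\times_{\BF_q}I_N\;\longrightarrow\;I
\]
are isomorphisms of pro-algebraic $\BF_q$-schemes, where $I_M:=I\cap LM$. By definition $K_N=I_N$ and $K_{\ol N}=I_{\ol N}$, so these isomorphisms immediately give, for every $g\in I(S)$, unique decompositions $g=nm\bar n$ and $g=\bar n'm'n'$ with $n,n'\in K_N(S)$, $\bar n,\bar n'\in K_{\ol N}(S)$ and $m,m'\in I_M(S)$. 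In particular uniqueness for $g\in K(S)\subset I(S)$ is immediate.

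The real content of the lemma is then to show that for $g\in K(S)$ the middle factors $m$ and $m'$ lie not merely in $I_M(S)$ but already in $K_M(S)=K(S)\cap M(R\dbl z\dbr)$, i.e.~that $m,m'\in K(S)$. Here the hypothesis $T(R\dbl z\dbr)\subset K$ is crucial: the subgroup scheme $K$ is stable under conjugation by $T$, and the Iwahori factorisation is $T$-equivariant because each of $I_N,I_M,I_{\ol N}$ is normalised by $T$. My plan is to pass to the finite-dimensional quotient $I/K_n$, where $n$ is chosen so that $K_n\subset K$, so that $K/K_n$ becomes a $T$-stable closed algebraic subgroup of $I/K_n$ containing the image of $T(R\dbl z\dbr)$. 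Working inductively on the successive pieces of the filtration $K_j/K_{j+1}$ (each of which decomposes under the $T$-action into affine root weight spaces indexed by the three disjoint families $\Phi_N$, $\Phi_M\cup\{0\}$, $\Phi_{\ol N}$), one checks that any $g\in K$ whose middle factor is known to lie in $K$ modulo $K_j$ automatically has middle factor in $K$ modulo $K_{j+1}$, so the filtration's exhaustiveness gives $m\in K$ in the limit.

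The main obstacle will be this last step: compatibility of the Iwahori decomposition with the subgroup $K$ at each finite level of the $K_n$-filtration. The key mechanism is that the three $T$-weight sets supporting $I_N/K_n$, $I_M/K_n$, $I_{\ol N}/K_n$ are pairwise disjoint, so that $T$-stability of $K/K_n$ together with $T(R\dbl z\dbr)\subset K$ forces $K/K_n$ to meet the three factors in a way compatible with the product decomposition; once this compatibility is established at each level, the passage to the limit is routine since all the groups involved are pro-algebraic with well-behaved filtrations. The second, opposite-order decomposition $g=\bar n'm'n'$ then follows by the identical argument applied to the opposite Iwahori factorisation.
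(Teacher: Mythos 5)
Your argument is sound in outline and arrives at the same result, but it is organised differently from the paper's proof, and it relies on an external ingredient that the paper proves along the way rather than assuming. The paper does not invoke the Iwahori factorisation of $I$ as a black box: instead it directly filters $K$ itself by a chain $K=G_1\supset G_2\supset\dotsm$ of normal subgroups coming from intersecting a Moy--Prasad filtration of $I$ with $K$, with $G_2$ the unipotent radical of $K$ and each $G_i/G_{i+1}\cong\BG_a$ an affine root subgroup. Starting from $g=g_1g_2$ with $g_1\in T(\Gamma(S,\CO_S)\dbl z\dbr)\subset K_M$ and $g_2$ in the unipotent radical, one then sorts the affine-root-group pieces into the $K_N$, $K_M$, $K_{\ol N}$ buckets inductively mod $G_i$ and passes to the limit; this simultaneously \emph{proves} the Iwahori factorisation of $I$ (the case $K=I$) and the general case, and all three factors are moreover produced inside $K$. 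Uniqueness is then the one-line observation $\ol N\cap MN=\{1\}$.

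Your route instead takes the scheme-theoretic Iwahori factorisation of $I$ as known, uses $K_N=I_N$ and $K_{\ol N}=I_{\ol N}$ to dispose of the outer factors for free, and reduces to showing $m\in K$ by a $T$-weight argument in the finite quotients $I/K_n$. This is a legitimate and somewhat slicker reduction. What it buys is conceptual transparency: the only thing left to prove is that the $T$-stable subgroup $K/K_n\subset I/K_n$ respects the weight-space decomposition, which is finite-dimensional and classical. What it costs: (i) you need the Iwahori factorisation of $I$ as a genuine isomorphism of ind-schemes, and your citation to [PR, \S1.4] and [Fa] is in the right neighbourhood but not obviously the precise scheme-theoretic statement you need -- the paper avoids this by proving it; (ii) your inductive step is phrased as tracking ``the middle factor mod $K_j$'', which needs to be unwound into the precise claim that $K/(K\cap K_j)$ itself factors compatibly as a product of its $N$-, $M$-, and $\ol N$-weight parts inside $I/K_j$ (the classical statement about split-torus-stable subgroups of unipotent groups), plus a remark that the induction terminates once $K_j\subset K$; and (iii) you should note that the base and terminal cases of the induction are finite since $K\supset K_n$ for some $n$, so no separate limit argument is needed -- in contrast to the paper, which works with an infinite filtration of $K$ and genuinely needs the limit. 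None of these is a fatal gap, but they are the places where a careful write-up would need to be filled in.
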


\begin{proof}
As $K(k)\supset\T(k\dbl z\dbr)$, the group scheme $K$ is generated by $\T(k\dbl z\dbr)$ and all affine root subgroups in $K$. There is a filtration $K=G_1\supset G_2\supset G_3\supset\dotsm$ such that $G_2$ is the unipotent radical of $K$, each $G_i$ is a normal subgroup of $K$ defined over $\BF_q$ and for $i\geq 2$, we have that $G_i/G_{i+1}\cong \mathbb{G}_a$ is generated by one of the affine root subgroups. Indeed, it can be obtained by intersecting a suitable filtration of $I$ by Moy-Prasad subgroups with $K$. We first decompose $g$ as $g_1g_2$ with $g_1\in \T\bigl(\Gamma(S,\CO_S)\dbl z\dbr\bigr)\subset K_M(S)$ and $g_2\in G_2\bigl(\Gamma(S,\CO_S)\dbl z\dbr\bigr)$. As each affine root subgroup is contained in one of the subgroups $K_M, K_N, K_{\ol N}$, one can then inductively construct decompositions of $g$ modulo some $G_i$. By passing to the limit one obtains the decompositions above. The uniqueness follows from the fact $\ol{N}\cap MN=\{1\}$.
\end{proof}

\begin{definition}\label{DefSlopeFilt1}
A bounded local $G$-shtuka with $I$-structure $\ul\CG$ over an $\BF_q$-scheme $S$ is called \emph{completely slope divisible} if there exists a parabolic subgroup $P$ of $G$ containing $\T$, a local $\ol P$-shtuka with $I\cap L\ol P$-structure $\ul{\bar\CP}=(\bar\CP,\phi)$ over $S$, and a $P$-fundamental $x$ such that
\begin{itemize}
\item $\ul\CG\cong i_\ast\ul{\bar\CP}$ where $i:\ol P\hookrightarrow G$ is the inclusion morphism,
\item \'etale-locally on $S$ the Frobenius $\phi$ is represented by an element of $x\cdot(L\ol P\cap I)(S)$.
\end{itemize}
\end{definition}
Note that by Remark~\ref{remfundalc} \ref{c} one can always enlarge $M,P$ and $\ol P$ and assume that $M$ equals the centralizer of the $M$-dominant Newton point of $x$.
See Section \ref{secgln} for a comparison to the corresponding (but slightly weaker) notion for $p$-divisible groups defined in \cite[Def 1.2]{OortZink}. 

\begin{proposition}\label{PropSlopeFilt}
Let $x$ be $P$-fundamental for a parabolic $P=MN\supset\T$ in which $M$ is the centralizer of the $M$-dominant Newton point of $x$. Let $g\in I(S)xI(S)$ for an $\BF_q$-scheme $S$. Then there is an element $h\in I(S)$ with $h^{-1}g\s(h)=xp$ for $p\in I_M(S)I_\ol{N}(S)$. In other words, the local $G$-shtuka with $I$-structure defined by $g$ is completely slope divisible.
\end{proposition}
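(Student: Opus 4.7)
The plan is to reduce via Lemma~\ref{lemprep2} to the case $g = xj$ with $j\in I(S)$, and then solve $h^{-1}g\s(h) = xp$ by a $z$-adic successive-approximation whose convergence is powered by the $P$-fundamentality of $x$.

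Since $g\in I(S) x I(S)$, write $g = i_1 x i_2$ with $i_1,i_2\in I(S)$; $\s$-conjugating $g$ by $i_1$ reduces to $g = xj$ with $j\in I(S)$ (any $h$ solving the new equation is then left-multiplied by $i_1$ to solve the original). Apply the Iwahori decomposition $I = I_M\cdot I_{\ol N}\cdot I_N$ of Lemma~\ref{LemmaIwahoriDecomp} to write $j = m_0\ol n_0 n_0$, and more generally any $h_k^{-1}g\s(h_k) = x\cdot m_k\ol n_k n_k$ with $n_k\in I_N(S)\cap I_{a(k)}(S)$. The target is to drive $a(k)\to\infty$.

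Inductively, starting from $h_0 = 1$, correct $h_{k+1} = h_k\epsilon_k$ where $\epsilon_k = x n_k' x^{-1}\in x I_N x^{-1}\subset I_N$ for some $n_k'\in I_N(S)$. A direct computation using $\s(x)=x$ and that $I_M$ normalizes both $I_N$ and $I_{\ol N}$ gives
\[
h_{k+1}^{-1}g\s(h_{k+1}) \;=\; x\cdot m_k\cdot\bigl(m_k^{-1}{n_k'}^{-1}m_k\bigr)\ol n_k\, n_k\,\s(\epsilon_k).
\]
The $I_N$-factor $m_k^{-1}{n_k'}^{-1}m_k$ can be swapped past $\ol n_k\in I_{\ol N}$ at the cost of a commutator lying in a strictly deeper congruence subgroup (standard commutator estimates in the affine BN-pair $I$). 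Thus the new $I_N$-component equals $m_k^{-1}{n_k'}^{-1}m_k\cdot n_k\cdot\s(\epsilon_k)$ modulo a deeper error, and demanding that this vanish at leading order amounts to the twisted equation
\[
\s(n_k') \;=\; x^{-1}\,n_k^{-1}\,m_k^{-1}\,n_k'\,m_k\,x
\]
for $n_k'\in I_N(S)$.

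This equation is solvable by $z$-adic iteration precisely because $x$ contracts $I_N$: by Remark~\ref{remfundalc}\,\ref{d} and the hypothesis that $M$ equals the centralizer of the $M$-dominant Newton point of $x$, for every $d\in\BN$ there is an $\ell$ with $x^\ell I_N x^{-\ell}\subset I_d$. After at most $\ell$ applications of the recursive step the $z$-adic depth of the residual $n_k$ therefore increases by at least one level; letting $k\to\infty$ yields a Cauchy sequence $\{h_k\}$ in $I(S)\subset G(\Gamma(S,\CO_S)\dbl z\dbr)$, which is $z$-adically complete, converging to some $h\in I(S)$ satisfying $h^{-1}g\s(h) = xp$ with $p\in I_M(S)I_{\ol N}(S)$. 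The main obstacle will be the detailed bookkeeping in the inductive step: one must verify carefully that, after each Iwahori rearrangement and each $x$-iteration, the net gain of depth in the $I_N$-component is at least one, so that the approximation process does not stall.
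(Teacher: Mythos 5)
The paper's proof and your proposal share the same engine (a $z$-adic successive approximation powered by the fact that $x$ contracts $I_N$), but the way the correction term is extracted at each step is genuinely different, and the route you chose creates difficulties that the paper sidesteps.

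The paper keeps the running state as $h_\ell^{-1}g\s(h_\ell)=xp_\ell r_\ell$ with $p_\ell\in I_M(S)I_{\ol N}(S)$ and $r_\ell\in x^\ell I_N(S)x^{-\ell}$, and then applies Lemma~\ref{LemmaIwahoriDecomp} \emph{in the group $K=I\cap x^\ell Ix^{-\ell}=(x^\ell I_N x^{-\ell})I_M I_{\ol N}$ and in the order $n m\ol n$}, writing $p_\ell r_\ell=n_\ell m_\ell\ol n_\ell$ with $n_\ell\in x^\ell I_N(S)x^{-\ell}$. The correction is just $\epsilon_\ell=x n_\ell x^{-1}$: then $\epsilon_\ell^{-1}x=xn_\ell^{-1}$ cancels $n_\ell$ exactly from the left, yielding $h_{\ell+1}^{-1}g\s(h_{\ell+1})=xm_\ell\ol n_\ell\cdot x\s(n_\ell)x^{-1}$ with the new residual $r_{\ell+1}=x\s(n_\ell)x^{-1}\in x^{\ell+1}I_N(S)x^{-(\ell+1)}$ automatically one level deeper in the $x$-filtration. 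No equation is solved and no commutator is rearranged — the control comes entirely from decomposing in the right subgroup.

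You instead keep the decomposition in the order $m\ol n n$, with the $I_N$-part $n_k$ on the right. Since $\s$-conjugation by $\epsilon_k$ introduces $\epsilon_k^{-1}x=xn_k'^{-1}$ on the left and $\s(\epsilon_k)$ on the right, cancelling something on the right forces you to commute $m_k^{-1}n_k'^{-1}m_k$ across $\ol n_k$ \emph{and} solve the resulting twisted equation $\s(n_k')=x^{-1}n_k^{-1}m_k^{-1}n_k'm_kx$. Two concrete problems arise. First, as written that equation is not directly iterable: $\s$ has no inverse over a general $S$, so you must first rearrange it to the genuine fixed-point form $n_k'=m_k n_k\,x\s(n_k')x^{-1}\,m_k^{-1}$ (whose convergence does follow from $x$-contraction of $I_N$). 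Second, and more seriously, the claim that the commutator picked up when swapping $m_k^{-1}n_k'^{-1}m_k\in I_N$ past $\ol n_k\in I_{\ol N}$ lands in a \emph{strictly} deeper congruence subgroup is not justified in the generality of the proposition. Here $P\supset\T$ need not contain $B$ (cf.\ Example~\ref{ExFundAlcoveGLn}), so $I_{\ol N}$ has depth-$0$ root components; the Chevalley commutator of a depth-$a(k)$ element of $I_N$ with such an element produces $I_M$-, $I_N$- and $I_{\ol N}$-components only of depth $\geq a(k)$, not $>a(k)$, and moreover the $I_M$-component cannot be absorbed into the residual $n_{k+1}$ at all. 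Thus your ``net gain of depth by at least one level per $\ell$ steps'' is exactly the place where the argument may stall, and it is not a matter of bookkeeping but of the filtration you chose: progress is measured in the filtration by $x^\ell I_N x^{-\ell}$, not by $K_d$, and your decomposition is not being performed inside $I\cap x^\ell Ix^{-\ell}$, which is what guarantees that the $N$-part you want to cancel lives at the right depth. If you switch to the paper's $n m\ol n$ order inside $I\cap x^\ell Ix^{-\ell}$, the commutator and the twisted equation both disappear.
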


\begin{proof}
By induction on $\el $ we prove the following

\smallskip

\noindent
{\it Claim.} For any $\el \in\BN$ there exists an element $h_\el \in I(S)$ with $h_\el ^{-1}g\s(h_\el )=xp_\el  r_\el $ for elements $p_{\el}\in I_M(S)I_\ol{N}(S)$ and $r_{\el}\in x^{\el} I_N(S)x^{-{\el}}$. We can choose the $h_{\el}$ such that $h_{{\el}-1}^{-1}h_{\el}\in x^{\el} I_N(S)x^{-{\el}}$.

\smallskip

To prove the claim for ${\el}=0$ we write $g=h_0xh'_0$ with $h_0,h'_0\in I(S)$. Then $h_0^{-1}g\s(h_0)=xh'_0\s(h_0)$ with $h'_0\s(h_0)\in I(S)=I_M(S)I_\ol{N}(S)I_N(S)$ by Lemma~\ref{LemmaIwahoriDecomp}. Now assume that the claim holds for ${\el}$. By Lemma~\ref{LemmaIwahoriDecomp} for the group $K=I\cap x^{\el} Ix^{-{\el}}=(x^{\el} I_N x^{-{\el}}) I_MI_{\ol N}$ we write $p_{\el} r_{\el}=n_{\el} m_{\el} \bar n_{\el}$ with $n_{\el}\in x^{\el} I_N(S)x^{-{\el}}$, $m_{\el}\in I_M(S)$, and $\bar n_{\el}\in I_\ol{N}(S)$. Setting $h_{{\el}+1}:=h_{\el} xn_{\el} x^{-1}$ we compute
\[
h_{{\el}+1}^{-1}g\s(h_{{\el}+1})\;=\;xn_{\el}^{-1}x^{-1}h_{\el}^{-1}g\s(h_{\el})x\s(n_{\el})x^{-1}\;=\;xm_{\el}\bar n_{\el} x\s(n_{\el})x^{-1}\;=\;xp_{{\el}+1}r_{{\el}+1}
\]
for $p_{{\el}+1}=m_{\el}\bar n_{\el}\in I_M(S)I_\ol{N}(S)$ and $r_{{\el}+1}=x\s(n_{\el})x^{-1}\in x^{{\el}+1}I_N(S)x^{-({\el}+1)}$. This proves the claim.

By Remark \ref{remfundalc} \ref{d}, we have that for every $d\in\BN$ there is an ${\el}\in\BN$ such that for all integers ${\el}'\ge {\el}$
\[
h_{\el}^{-1}h_{{\el}'}\;\in\; x^{{\el}}I_N(S)x^{-{\el}}\subset K_d.
\]
This implies that the sequence $h_{\el}$ converges to an element $h\in I(S)$ in the $z$-adic topology. Also $M\ol N\cap N=\{1\}$ implies that $r_{{\el}'}\equiv r_{\el}\equiv 1\pmod{z^d}$ and $p_{{\el}'}\equiv p_{{\el}}$. Hence the $p_{\el}$ converge to an element $p\in I_M(S)I_\ol{N}(S)$ and the $r_{\el}$ converge to $1$. Altogether we find $h^{-1}g\s(h)=xp$ as desired.
\end{proof}

The proposition implies the following characterization of complete slope divisibility (use Remark~\ref{remfundalc} \ref{d}).

\begin{corollary}\label{CorCSD}
A bounded local $G$-shtuka with $I$-structure $\ul\CG$ over an $\BF_q$-scheme $S$ is completely slope divisible if and only if there is a $P$-fundamental element $x\in\wt W$ for some parabolic $P\supset\T$ such that \'etale-locally on $S$, $\ul\CG$ is isomorphic to $(I_S,g\s)$ with $g\in I(S)xI(S)$.\qed
\end{corollary}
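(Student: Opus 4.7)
The equivalence is essentially a reformulation of Proposition~\ref{PropSlopeFilt}. I would split the argument into the two implications, treating the globalization of the $\ol P$-reduction as the main technical point.

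\emph{Direction $(\Rightarrow)$.} Assume $\ul\CG \cong i_\ast\ul{\bar\CP}$ is completely slope divisible, so that by Definition~\ref{DefSlopeFilt1} the Frobenius of $\ul{\bar\CP}$ is \'etale-locally of the form $xp\s$ with $p\in (L\ol P\cap I)(S)$. After passing to an \'etale cover trivializing $\bar\CP$, and hence $\CG = i_\ast\bar\CP$, the Frobenius of $\ul\CG$ is represented by the element $xp \in L\ol P(S) \subseteq LG(S)$. Since $p \in (L\ol P\cap I)(S)\subseteq I(S)$ we have $xp = 1\cdot x\cdot p \in I(S)xI(S)$, proving the ``if'' side.

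\emph{Direction $(\Leftarrow)$.} Suppose \'etale-locally on $S$ one has $\ul\CG\cong (I_S,g\s)$ with $g\in I(S)xI(S)$. By Proposition~\ref{PropSlopeFilt}, after a further \'etale cover if necessary, there exists $h\in I(S)$ with $h^{-1}g\s(h) = xp$ where $p\in I_M(S)\cdot I_{\ol N}(S) = (L\ol P\cap I)(S)$. Replacing the trivialization of $\CG$ by $h$, the Frobenius becomes $xp\s$ and takes its values in $L\ol P(S)$. This exhibits $\ul\CG$ \'etale-locally as $i_\ast$ of the local $\ol P$-shtuka $\bigl((I\cap L\ol P)_S,\,xp\s\bigr)$, whose Frobenius is of the form required by Definition~\ref{DefSlopeFilt1}.

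\emph{Globalization, the main obstacle.} To upgrade the \'etale-local reductions to a global local $\ol P$-shtuka $\ul{\bar\CP}$ on $S$ one has to check that they are canonically identified on overlaps, so that \'etale descent applies. Two choices $h_1,h_2$ of the element provided by Proposition~\ref{PropSlopeFilt} yield \'etale-local reductions whose difference is $u := h_1^{-1}h_2\in I(S)$, and the intertwining relation reads $u\cdot xp_1 = xp_2\cdot\s(u)$ with $p_i\in(L\ol P\cap I)(S)$. Decomposing $u = nm\bar n$ via the (unique) Iwahori decomposition of Lemma~\ref{LemmaIwahoriDecomp} and iterating conjugation by $x$ in the same manner as in the proof of Proposition~\ref{PropSlopeFilt}, the contraction property of Remark~\ref{remfundalc} \ref{d} forces the $N$-component $n$ to converge to $1$, so $u \in (L\ol P\cap I)(S)$. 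Hence the local $(I\cap L\ol P)$-reductions agree canonically, \'etale descent produces a global local $\ol P$-shtuka $\ul{\bar\CP}$ with $\ul\CG\cong i_\ast\ul{\bar\CP}$, and the Frobenius has the required form by construction, completing the proof.
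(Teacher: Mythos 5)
Your proposal is correct and follows exactly the route the paper indicates in the sentence preceding the corollary: both directions come down to Proposition~\ref{PropSlopeFilt} together with the contraction property of Remark~\ref{remfundalc}~\ref{d}. The one place you go beyond what the paper writes is the descent step, which the authors treat as immediate; your instinct to address it is sound, since the definition of complete slope divisibility requires a global local $\ol P$-shtuka over $S$.

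Two small remarks on the write-up. First, in the $(\Leftarrow)$ direction no extra \'etale covering is needed to invoke Proposition~\ref{PropSlopeFilt}: it already produces $h\in I(S)$ over the given $S$. Second, your gluing argument can be tightened and made less dependent on tracking the $N$-component through conjugation by $p_i\in I_MI_{\ol N}$ (which is delicate, since $\ol P$ does not normalize $N$). From the intertwining relation one gets $x^{-1}ux = p\,\s(u)\,p'^{-1}$ with $p,p'\in (L\ol P\cap I)(S)$. Since $I_M$ and $I_{\ol N}$ are contained in $x^\el Ix^{-\el}$ for every $\el\ge0$ and $\s$ preserves $x^\el Ix^{-\el}$, an induction on $\el$ gives $u\in x^\el Ix^{-\el}\cap I$ for all $\el\ge0$. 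One then uses
\[
\bigcap_{\el\ge0}\bigl(x^\el Ix^{-\el}\cap I\bigr)\;=\;\Bigl(\bigcap_{\el\ge0}x^\el I_Nx^{-\el}\Bigr)\,I_M I_{\ol N}\;=\;I_M I_{\ol N}\;=\;L\ol P\cap I,
\]
where the middle equality is exactly Remark~\ref{remfundalc}~\ref{d} (after enlarging $P$ via Remark~\ref{remfundalc}~\ref{c} so that $M$ is the centralizer of the $M$-dominant Newton point, as the paper does before Proposition~\ref{PropSlopeFilt}). This yields $u\in(L\ol P\cap I)(S)$ directly, without needing to analyze the $I_N$-component of $p\,\s(n)\,p^{-1}$.
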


\begin{remark}
Let $\ul\CG$ be a completely slope divisible local $G$-shtuka with $I$-structure over an algebraically closed field $k$ and let $x$ be the associated fundamental alcove. Then by Remark \ref{remfundalc} \ref{e}, there is a trivialization of the $I$-torsor such that the Frobenius is given by $x\s$.
\end{remark}

\begin{lemma}\label{LemmaTrivN}
Let $x\in\wt W_M$ be $P$-fundamental for $P=MN$ where $M$ is the centralizer of the $M$-dominant Newton point of $x$. Let $\bar n\in L\ol N(S)$ for a noetherian $\BF_q$-scheme $S$ and fix some $d\in\BN$. Then $\bar n$ is bounded, i.e.~contained in $x^{-{\el_0}}I_{\ol N}(S)x^{\el_0}$ for some $\el_0\in\BZ$, and there is a finite surjective radicial morphism $S'\to S$ and an element $h\in x^{-{\el_0}}I_{\ol N}(S')x^{\el_0}$ with $h^{-1}x\bar n\s(h)=x\bar n'$ for $\bar n'\in I_{\ol N}(S')\cap I_d(S')$. If $S$ is local with closed point $s$ and if the reduction of $\bar n$ in $s$ is equal to $1$, then $h$ can be chosen such that its reduction in $s$ is also equal to $1$.
\end{lemma}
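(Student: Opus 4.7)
The boundedness of $\bar n$ and the existence of $\el_0\in\BZ$ with $\bar n\in x^{-\el_0}I_{\ol N}(S)x^{\el_0}$ are exactly Lemma~\ref{LemmaBoundedN}. Setting $V_j:=x^{-j}I_{\ol N}x^j$ for $j\in\BZ$, one has a decreasing chain $V_j\supset V_{j+1}$ (because $x$ is $P$-fundamental), and Remark~\ref{remfundalc}\ref{d} furnishes an $N\in\BN$ with $V_N\subset I_d\cap I_{\ol N}$; after enlarging $|\el_0|$ we may assume $\el_0\le 0\le N$. The plan is to construct $h$ by successive approximation along the chain $V_{\el_0}\supset V_{\el_0+1}\supset\cdots\supset V_N$: we will build a tower of finite surjective radicial morphisms $S_N\to S_{N-1}\to\cdots\to S_{\el_0}=S$ together with elements $h_j\in V_{\el_0}(S_j)$ (with $h_{\el_0}=1$) such that $\bar n^{(j)}:=\phi_x(h_j)^{-1}\,\bar n\,\s(h_j)\in V_j(S_j)$, where $\phi_x(u):=x^{-1}ux$. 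Then $h:=h_N$ and $S':=S_N$ will do the job: $\bar n':=\bar n^{(N)}\in V_N(S')\subset I_{\ol N}(S')\cap I_d(S')$, the composition $S'\to S$ is finite surjective radicial as a composition of such, and $h_N\in V_{\el_0}(S')$ because $V_{\el_0}$ is a subgroup containing every $V_j$ with $j\ge\el_0$.

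At the inductive step we take $h_{j+1}:=h_j g_j$ for $g_j\in V_j(S_{j+1})$ to be determined, giving $\bar n^{(j+1)}=\phi_x(g_j)^{-1}\,\bar n^{(j)}\,\s(g_j)$. The crucial feature coming from the $P$-fundamentality of $x$ is that $\phi_x$ carries $V_j$ into $V_{j+1}$, so modulo $V_{j+1}$ the factor $\phi_x(g_j)^{-1}$ drops out. Working in an abelian subquotient of a suitable refinement of $V_j\supset V_{j+1}$ (discussed in the next paragraph), the condition $\bar n^{(j+1)}\in V_{j+1}$ then linearises to an equation of the form $\s(g_j)\equiv -\bar n^{(j)}$. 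On each such vector-group quotient $\s$ acts as the $q$-th power Frobenius on finitely many $\CO_{S_j}$-coefficients, so the equation is solved by adjoining the corresponding $q$-th roots --- which is precisely a finite surjective radicial cover $S_{j+1}\to S_j$. The solution is then lifted back to $g_j\in V_j(S_{j+1})$ using the pro-unipotent structure as in Lemma~\ref{lemsecunip}.

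The hardest point, the obstacle alluded to above, is that $V_j/V_{j+1}$ need not itself be abelian: a commutator of two affine root subgroups contributing to $V_j$ may lie in $V_j$ rather than descend into $V_{j+1}$. The standard remedy is to interleave the filtration $\{V_j\}$ with the Moy--Prasad-type filtration of $L\ol N$ by $z$-adic depth, whose successive quotients are finite direct sums of copies of $\mathbb{G}_a$ and for which commutators strictly increase depth; a single step $V_j\to V_{j+1}$ then decomposes into finitely many substeps in abelian vector groups, each handled by the previous paragraph. Finally, for the local variant, if $S$ is local with closed point $s$ and $\bar n|_s=1$, a straightforward induction yields $\bar n^{(j)}|_s=1$ for all $j$: at each step the linearised equation admits the trivial solution $g_j|_s=1$, which we retain, and this ensures $h|_s=1$ as required.
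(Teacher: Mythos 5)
Your inductive scheme is essentially the one the paper uses: filter by $V_j:=x^{-j}I_{\ol N}x^j$, proceed one step at a time, and at each step pass to a finite surjective radicial cover obtained by ``inverting $\s$'' on the quotient $V_j/V_{j+1}$, then assemble the $g_j$'s into $h$ and check the reduction statement at the closed point. The computation $\bar n^{(j+1)}=\phi_x(g_j)^{-1}\bar n^{(j)}\s(g_j)$ and the observation that $\phi_x(g_j)\in V_{j+1}$ is also exactly the key identity in the paper (there written multiplicatively as $h_{\el+1}^{-1}x\bar n\s(h_{\el+1})=x\bigl(x^{-(\el+1)}\tilde y_{\el}f_{\el}x^{\el+1}\bigr)$).

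Where you diverge is the ``hardest point'' you flag --- the possible non-commutativity of $V_j/V_{j+1}$ --- and the fix you propose for it. This is in fact a non-issue and the extra Moy--Prasad interleaving is unnecessary. The equation you need to solve at each step is $\bar n^{(j)}\s(g_j)\in V_{j+1}$ with $g_j\in V_j$; this is just the condition that the class $\bar g_j\in V_j/V_{j+1}$ maps, under the relative $q$-Frobenius $\sigma$ of the finite-type $\BF_q$-scheme $V_j/V_{j+1}$, to the prescribed class of $(\bar n^{(j)})^{-1}$. You do not need a group structure on the quotient, let alone an abelian one: the $q$-Frobenius of \emph{any} finite-type $\BF_q$-scheme is finite, surjective and radicial, so you can simply pull back $S_j$ along $\sigma:V_j/V_{j+1}\to V_j/V_{j+1}$ and read off $g_j$ from the upper horizontal arrow of the resulting cartesian square (lifting from the quotient to $V_j$ via Lemma~\ref{lemsecunip}, as you say). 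This is what the paper does, working with the single scheme $I_{\ol N}/x^{-1}I_{\ol N}x$ after conjugating by $x^{\el}$. Your ``linearisation'' via vector-group subquotients would also work --- commutators in a pro-unipotent group do increase depth, and each abelian piece is then just coordinate-wise $q$-th root extraction --- but it buys nothing, introduces a second (finer) filtration that must be kept compatible with $\{V_j\}$ and with $\phi_x$, and obscures the fact that the relevant finiteness/radiciality is a property of Frobenius alone. The rest of your argument (finitely many steps suffice by Remark~\ref{remfundalc}\ref{d}; $h\in V_{\el_0}$ since each $g_j\in V_j\subset V_{\el_0}$; the local variant by tracking that the unique solution over each cover reduces to $1$ at the closed point) agrees with the paper.
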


\begin{proof}
The boundedness of $\bar n$ and the existence of $\el_0\in\BZ$ with $\bar n\in x^{-{\el_0}}I_{\ol N}(S)x^{\el_0}$ follows from Lemma~\ref{LemmaBoundedN}. We use induction on $l\geq l_0$ and construct finite coverings $S_{\el}\to S$ and elements $h_{\el}\in x^{-{\el_0}}I_{\ol N}(S_\el)x^{\el_0}$ with $h_{\el}^{-1}x\bar n\s(h_{\el})=x\bar n_{\el}$ such that $\bar n_{\el}=x^{-{\el}}y_{\el} x^{\el}\in x^{-{\el}}I_{\ol N}(S_{\el})x^{\el}$. Choosing ${\el}$ big enough suffices for our purpose by Remark \ref{remfundalc} \ref{d}. We start with $S_{l_0}=S$, $h_{l_0}=1$, and $\bar n_{l_0}=\bar n$. Assume we already found $S_{\el}$ and $h_{\el}$. Since $I_{\ol N}/x^{-1}I_{\ol N}x$ is a scheme of finite type over $\BF_q$, its $q$-Frobenius endomorphism $\sigma$ is finite surjective. Consider the cartesian diagram in which the lower horizontal morphism is given by the element $y_{\el}^{-1}\in I_{\ol N}(S_{\el})$
\[
\xymatrix @C+3pc @R+1pc {
S_{{\el}+1} \ar[r]^{\TS \tilde y_{{\el}}^{-1}\quad} \ar[d] & I_{\ol N}/x^{-1}I_{\ol N}x \ar[d]^{\TS \sigma}\\
S_{\el} \ar[r]^{\TS y_{{\el}}^{-1}\quad} & I_{\ol N}/x^{-1}I_{\ol N}x
}
\]
Then $S_{{\el}+1}\to S_{\el}$ is also a finite surjective radicial morphism. The upper horizontal morphism is given by an element $\tilde y_{\el}^{-1}\in I_{\ol N}(S_{{\el}+1})$ with $\s(\tilde y_{\el}^{-1})=y_{\el}^{-1}(x^{-1}f_{\el} x)$ for an $f_{\el}\in I_{\ol N}(S_{{\el}})$. Set $h_{{\el}+1}=h_{\el} x^{-{\el}}\tilde y_{\el}^{-1}x^{{\el}}$. Then
\begin{eqnarray*}
h_{{\el}+1}^{-1}x\bar n\s(h_{{\el}+1})&=&x^{-{\el}}\tilde y_{\el} x^{\el} xx^{-{\el}}y_{\el} x^{\el} x^{-{\el}}\s(\tilde y_{\el}^{-1})x^{\el}\\[2mm]
&=& x(x^{-({\el}+1)}\tilde y_{\el}f_{\el} x^{{\el}+1}).
\end{eqnarray*}
Let $y_{{\el}+1}:=\tilde y_{\el}f_{\el}\in I_{\ol N}(S_{{\el}+1})$. Then $$\bar n_{{\el}+1}=x^{-({\el}+1)}y_{{\el}+1}x^{{\el}+1}\in x^{-({\el}+1)}I_{\ol N}(S_{{\el}+1})x^{{\el}+1}$$ as desired. Let $\el$ be big enough such that $x^{-l}I_{\ol N}x^l\subset I_d$. We stop the iteration at $l$ to find a finite surjective radicial morphism $S':=S_\el\to S$, an element $h:=h_l\in x^{-{\el_0}}I_{\ol N}(S')x^{\el_0}$ with $h^{-1}x\bar n\s(h)=x\bar n'$ for $\bar n':=\bar n_l\in I_{\ol N}(S')\cap I_d(S')$. The last assertion also follows from the explicit construction of the elements $h_l$.
\end{proof}

Over a perfect field $k$ the proof has the following

\begin{corollary}\label{Cor4.10'}
Let $x\in\wt W_M$ be $P$-fundamental for $P=MN$ where $M$ is the centralizer of the $M$-dominant Newton point of $x$. Let $\bar n\in x^{-{\el_0}}I_{\ol N}(k)x^{\el_0}$ for a perfect field $k$ and some $\el_0\in\BZ$. Then there is an element $h\in x^{-{\el_0}}I_{\ol N}(k)x^{\el_0}$ with $h^{-1}x\bar n\s(h)=x$. 
\end{corollary}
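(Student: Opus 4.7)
The plan is to imitate the iterative construction used in the proof of Lemma~\ref{LemmaTrivN}, exploiting two special features of the perfect-field setting: first, that the $q$-Frobenius $\sigma$ induces a bijection on $I_{\ol N}(k)$, so every equation of the form $\sigma(\tilde y^{-1})=z$ with $z\in I_{\ol N}(k)$ has a solution $\tilde y\in I_{\ol N}(k)$, meaning the finite surjective radicial coverings $S_{\el+1}\to S_\el$ from the lemma collapse to identities; and second, that because we are not asking $\bar n'\equiv 1\pmod{z^d}$ for a fixed $d$ but rather wish to obtain $\bar n'=1$, we can let the iteration run to infinity.

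Concretely, I would construct inductively, for each $\el\ge\el_0$, an element $h_\el\in x^{-\el_0}I_{\ol N}(k)x^{\el_0}$ together with an element $\bar n_\el=x^{-\el}y_\el x^{\el}\in x^{-\el}I_{\ol N}(k)x^{\el}$ such that $h_\el^{-1}x\bar n\sigma(h_\el)=x\bar n_\el$. Starting from $h_{\el_0}=1$ and $\bar n_{\el_0}=\bar n$, at the inductive step one writes $\bar n_\el$ as the image of $y_\el^{-1}$ in $I_{\ol N}/x^{-1}I_{\ol N}x$, picks $f_\el\in I_{\ol N}(k)$ and then uses the bijectivity of $\sigma$ on $I_{\ol N}(k)$ to produce $\tilde y_\el\in I_{\ol N}(k)$ with $\sigma(\tilde y_\el^{-1})=y_\el^{-1}(x^{-1}f_\el x)$, and sets $h_{\el+1}:=h_\el x^{-\el}\tilde y_\el^{-1}x^{\el}$, exactly as in the proof of Lemma~\ref{LemmaTrivN}. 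This yields $\bar n_{\el+1}\in x^{-(\el+1)}I_{\ol N}(k)x^{\el+1}$, which inductively remains inside $x^{-\el_0}I_{\ol N}(k)x^{\el_0}$ because $x^{-1}I_{\ol N}x\subset I_{\ol N}$ by $P$-fundamentality, so the containment $x^{-\el}I_{\ol N}x^{\el}\subset x^{-\el_0}I_{\ol N}x^{\el_0}$ holds for every $\el\ge\el_0$. In particular every $h_\el$ lies in $x^{-\el_0}I_{\ol N}(k)x^{\el_0}$.

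Next I would pass to the limit. By Remark~\ref{remfundalc}\ref{d}, for every $d\in\BN$ there exists some $\el_1$ with $x^{-\el}I_{\ol N}x^{\el}\subset I_d$ for all $\el\ge\el_1$. Hence $h_\el^{-1}h_{\el+1}=x^{-\el}\tilde y_\el^{-1}x^{\el}\in I_d$ for $\el$ large, so $(h_\el)$ is Cauchy in the $z$-adic topology and converges to some $h\in LG(k)$. Since $x^{-\el_0}I_{\ol N}(k)x^{\el_0}$ is closed in this topology (being the $k$-points of a bounded subscheme of $L\ol N$), we have $h\in x^{-\el_0}I_{\ol N}(k)x^{\el_0}$. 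Likewise $\bar n_\el\to 1$ in $LG(k)$, so passing to the limit in the identity $h_\el^{-1}x\bar n\sigma(h_\el)=x\bar n_\el$ gives $h^{-1}x\bar n\sigma(h)=x$.

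The step I expect to require the most care is verifying convergence and the closedness of $x^{-\el_0}I_{\ol N}(k)x^{\el_0}$ inside $LG(k)$, since these are the only points where one leaves the purely algebraic setting of the proof of Lemma~\ref{LemmaTrivN}; once these are in place, everything else is a direct transcription of that proof with each radicial covering $S_{\el+1}\to S_\el$ replaced by $\Spec k$ thanks to the perfectness of $k$.
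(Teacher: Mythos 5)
Your proposal is correct and is essentially the paper's own proof: both arguments run the iteration of Lemma~\ref{LemmaTrivN}, observing that over a perfect field the $q$-Frobenius is an automorphism so the radicial coverings $S_{\el+1}\to S_\el$ become $\Spec k\to\Spec k$, and then pass to the limit using that $x^{-\el}\tilde y_\el^{-1}x^\el$ and $\bar n_\el$ converge to $1$. Your extra remarks on closedness of $x^{-\el_0}I_{\ol N}(k)x^{\el_0}$ are a harmless elaboration of the same convergence argument.
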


\begin{proof}
In the proof of Lemma~\ref{LemmaTrivN} we have $S_\el=\Spec k$ for all $\el$ with the morphisms $S_{\el+1}\to S_\el$ being the Frobenius automorphism. Here we do not stop the iteration. Instead we let $h$ be the limit of the $h_l$. It exists because the $x^{-l}\tilde y_l^{-1}x^l\in x^{-l}I_{\ol N}(k)x^l$ and the $\ol n_\el\in x^{-l}I_{\ol N}(k)x^l$ converge to 1. 
\end{proof}

\begin{proposition}\label{PropOZ2.5}
Let $\ul\CH$ be a local $G$-shtuka with $K$-structure over an algebraically closed field $k$. Let $\mu\in X_\ast(\T)$ be dominant. Then up to isomorphism there exist only finitely many quasi-isogenies $\rho:\ul\CG\to\ul\CH$ bounded by $\mu$ with $\ul\CG$ a completely slope divisible local $G$-shtuka with $K$-structure. Here we say that $\rho:\ul\CG\to\ul\CH$ and $\rho':\ul\CG'\to\ul\CH$ are isomorphic if there is an isomorphism $\alpha:\ul\CG\isoto\ul\CG'$ with $\rho=\rho'\circ\alpha$.
\end{proposition}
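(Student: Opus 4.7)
The plan is to enumerate isomorphism classes of pairs by first controlling the source $\ul\CG$ (of which only finitely many isomorphism classes can occur), and then, for each fixed source, counting bounded quasi-isogenies modulo automorphisms.

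Trivialise $\ul\CH=(K_k,b\s)$ for some $b\in LG(k)$. By Corollary~\ref{CorCSD} together with Remark~\ref{remfundalc}\ref{e}, every completely slope divisible local $G$-shtuka with $K$-structure over $k$ whose quasi-isogeny class equals $[b]$ admits, in a suitable trivialisation, the form $(K_k,x\s)$ for some $P$-fundamental alcove $x$ in $[b]$. By Remark~\ref{remfundalc}\ref{b}, the set of such $x$ (as $P$ varies) is finite, so only finitely many isomorphism classes of $\ul\CG$ need to be considered.

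Fix one such $x$ and choose any $g_0\in LG(k)$ with $g_0^{-1}b\s(g_0)=x$. A quasi-isogeny $\rho\colon(K_k,x\s)\to\ul\CH$ corresponds in the chosen trivialisations to the unique element $g\in LG(k)$ with $g^{-1}b\s(g)=x$, that is, $g\in Y_x:=g_0\cdot J_x^\sigma$ where $J_x^\sigma:=\{h\in LG(k):h^{-1}x\s(h)=x\}$. The group $\Aut((K_k,x\s))=Z_x^K:=K(k)\cap J_x^\sigma$ acts freely on $Y_x$ by right multiplication, and isomorphism classes of pairs $(\ul\CG,\rho)$ with $\ul\CG=(K_k,x\s)$ are in bijection with $Y_x/Z_x^K$. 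Using Definition~\ref{DefBounded}, the condition that $\rho$ be bounded by $\mu$ means that $g$ lies in the bounded subset $B:=\bigcup_{\mu'\preceq\mu}K_0(k)z^{\mu'}K_0(k)$ of $LG(k)$. Via the inner-twist isomorphism $J_b\isoto J_x^\sigma$, $j\mapsto g_0^{-1}jg_0$, supplied by Proposition~\ref{PropFieldOfDefQIS}, the iso classes of bounded $\rho$'s for this $x$ are in bijection with
\[
\bigl(J_b\cap Bg_0^{-1}\bigr)\,\big/\,\bigl(J_b\cap g_0K(k)g_0^{-1}\bigr).
\]

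Finiteness of this quotient follows from the local compactness of $J_b$. By Corollary~\ref{PropQISGpDecent}, after possibly replacing $\BF_q$ by a finite extension $\BF_{q^s}$, the group $J_b$ is the set of $F$-points of a reductive algebraic group over the local field $F:=\BF_{q^s}\dpl z\dpr$, hence a locally compact topological group in the $z$-adic topology. The denominator $J_b\cap g_0K(k)g_0^{-1}$ is an open subgroup: since $K\supset K_n$ for some $n$ and $g_0$ is bounded, there exists $n'$ with $g_0K_ng_0^{-1}\supset K_{n'}$, so this subgroup contains the open congruence subgroup $J_b\cap K_{n'}$. The numerator $J_b\cap Bg_0^{-1}$ is relatively compact: the translate $Bg_0^{-1}$ is bounded in the ind-scheme $LG$, so its intersection with the closed subgroup $J_b$ is bounded in $J_b$, hence pre-compact in the $z$-adic topology. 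Since a compact subset of a topological group is covered by finitely many cosets of any open subgroup, the quotient is finite, completing the argument. The chief technical hurdle is the comparison between the ind-scheme topology on $LG$ and the $z$-adic topology on $J_b$, namely the assertions that bounded subsets of $LG$ intersect $J_b$ in relatively compact subsets and that congruence subgroups $K_n$ induce open subgroups of $J_b$.
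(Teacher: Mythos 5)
Your proof is correct and follows the same overall strategy as the paper's: reduce, via Corollary~\ref{CorCSD} and Remark~\ref{remfundalc}, to $\ul\CG\cong(K_k,x\s)$ for one of the finitely many fundamental alcoves $x$ in $[b]$, and then count bounded quasi-isogenies to $\ul\CH$ up to automorphism of the source. The main presentational difference is that the paper first replaces $\ul\CH$ itself by $(K_k,x\s)$ via a fixed auxiliary quasi-isogeny $\eta$ (absorbing $\mu_\eta$ into the bound), so that quasi-isogenies live directly in $J_x=\QIsog(K_k,x\s)$; you keep $\ul\CH=(K_k,b\s)$ and use the inner twist $J_b\isoto J_x^\sigma$ at the end, which is the same computation. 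Where your write-up adds value is the explicit double-coset quotient $(J_b\cap Bg_0^{-1})/(J_b\cap g_0K(k)g_0^{-1})$: the paper's final sentence invokes decency and Corollary~\ref{PropQISGpDecent} and says there are ``only finitely many such quasi-isogenies,'' but the set of bounded elements of $J_x$ is not finite (it contains $K(k)\cap J_x$), so the claim is only true after dividing by $\Aut(\ul\CG)=K(k)\cap J_x$ — precisely the compact-modulo-open count you spell out. Conversely, the ``chief technical hurdle'' you flag at the end is already resolved by Corollary~\ref{PropQISGpDecent} itself: since $x$ is decent, $J_x$ sits as a closed subgroup inside $LG(\BF_{q^s})=G\bigl(\BF_{q^s}\dpl z\dpr\bigr)$, a genuine $p$-adic group where a finite union of $K_0(\BF_{q^s})$-double cosets is compact (Cartan decomposition over $\BF_{q^s}\dpl z\dpr$) and $K_n\cap J_x$ is open, so the comparison of topologies costs nothing once you work over $\BF_{q^s}\dpl z\dpr$ rather than $k\dpl z\dpr$. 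One small citation slip: the inner-form description of $J_b$ that you use is Corollary~\ref{PropQISGpDecent}, while Proposition~\ref{PropFieldOfDefQIS} only gives representability and descent of the quasi-isogeny scheme.
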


\begin{proof}
By Remark \ref{remfundalc} \ref{a} there is a quasi-isogeny $\eta:\ul{\CH}\to\ul{\wt\CH}$ with $\ul{\wt\CH}=(K_k,x\s)$ for some $P$-fundamental $x$. By \cite[Lemma 3.11]{HV1} the quasi-isogeny $\eta$ is bounded by its Hodge point $\mu_\eta$. If $\rho:\ul\CG\to\ul\CH$ is a quasi-isogeny bounded by $\mu$ then $\eta\circ \rho$ is bounded by $\mu+\mu_\eta$. Hence we may assume that $\ul\CH=\ul{\wt \CH}$. Note that there are only finitely many possibilities for the parabolic subgroup $P$. By Corollary~\ref{CorCSD} and Remark \ref{remfundalc} \ref{e} and \ref{c} we may assume that $\ul\CG=(K_k,x'\s)$ for some $P'$-fundamental $x'$ such that $P'$ is the centralizer of the $M'$-dominant Newton point of $x'$. As $\ul\CG$ and $\ul\CH$ are isogenous, Remark \ref{remfundalc} \ref{b} and \ref{c} imply that we may assume that $x=x'$ and $P=P'$. Then the quasi-isogeny is given by an element $h\in \QIsog(K_k,x\s)$ which is bounded by $\mu$. Since $x$ is decent, Corollary~\ref{PropQISGpDecent} shows that there are only finitely many such quasi-isogenies.
\end{proof}

\begin{theorem}\label{ThmOZ2.4}
Let $S$ be an integral noetherian $\BF_q$-scheme and let $g\in LG(S)$ such that the $\sigma$-conjugacy classes of $g_s$ in the geometric points $s$ of  $S$ all coincide. Let $x$ be a $P$-fundamental alcove associated with this $\sigma$-conjugacy class as in Remark \ref{remfundalc} \ref{a}. Then there are morphisms $\wt S\xrightarrow{\beta} S'\xrightarrow{\alpha} S$  with $S'$ integral, $\alpha$ finite surjective, and $\beta$ an \'etale covering, and a bounded element $\tilde h\in LG(\wt S)$ satisfying $\tilde h^{-1}g_{\wt S}\s(\tilde h)\in I(\wt S)xI(\wt S)$.
\end{theorem}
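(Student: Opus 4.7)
The plan is to interpret $\tilde h$ as an $LG/I$-valued point, after a finite and then an étale base change, of the moduli space of quasi-isogenies into $\ul\CH:=(I_S,g\s)$ furnished by Proposition \ref{PropHV5.2}, and to exploit projectivity of this moduli space to perform both base changes. Concretely, for each dominant $\mu'\in X_\ast(\T)$ consider the subscheme $\CR_{\mu'}\subset(LG/I)\times_{\BF_q}S$ of those $\bar h$ bounded by $\mu'$ with $\bar h^{-1}g\s(\bar h)\in IxI$; by Proposition \ref{PropHV5.2} it represents the functor of quasi-isogenies $\rho\colon\ul\CG\to\ul\CH_T$ bounded by $\mu'$ with $\ul\CG$ of affine Weyl type $x$, and sits inside a projective-over-$S$ Schubert variety. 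The key geometric input is that $\CR_{\mu'}\to S$ is in fact \emph{projective}: although $IxI\subset LG$ is only locally closed, $\bar h^{-1}g\s(\bar h)$ always lies in the constant $\sigma$-conjugacy class $[g]=[b]$, and Lemma \ref{lemprep1} asserts that $IxI$ is closed in $[b]$, so the preimage of $IxI$ in our bounded piece equals the (closed) preimage of its Bruhat closure $\overline{IxI}$.

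Surjectivity of $\CR_{\mu'}\to S$ for $\mu'$ sufficiently large follows because Remark \ref{remfundalc} \ref{a} supplies at each geometric point $s$ an $h_s\in LG(k(s))$ with $h_s^{-1}g_s\s(h_s)=x$, which lies in $\CR_{\mu'}(k(s))$ once $\mu'$ dominates the Hodge point of $h_s$; noetherianness of $S$ together with the filtered system of constructible images $\mathrm{Image}(\CR_{\mu'}\to S)$ covering $S$ then gives a single such $\mu'$. Fix it. To produce $\alpha\colon S'\to S$, select a closed point $p$ of the non-empty finite-type $k(\eta)$-scheme $(\CR_{\mu'})_\eta$, with residue field $L/k(\eta)$ finite, and form its reduced closure $S_1\subset\CR_{\mu'}$: this $S_1$ is integral and proper over $S$ with generic fibre $\Spec L$. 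Apply Stein factorisation $S_1\to S'\to S$; then $S'=\Spec_S f_\ast\CO_{S_1}$ is integral (since $f_\ast\CO_{S_1}$ is a torsion-free coherent $\CO_S$-algebra whose generic stalk is the field $L$) and $\alpha\colon S'\to S$ is finite and surjective. The remaining morphism $S_1\to S'$ is proper and birational with geometrically connected fibres, and after replacing $S'$ by a suitable further integral finite cover (harmless for the conclusion, obtained for instance by normalisation) one may apply Zariski's connectedness theorem to identify $S_1$ with $S'$. Composing the inclusion $S_1\hookrightarrow\CR_{\mu'}$ with the projection to $LG/I$ then yields an $\bar h\in(LG/I)(S')$, bounded by $\mu'$ and satisfying $\bar h^{-1}g_{S'}\s(\bar h)\in IxI$.

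Finally, since $K_1\subset I$, Lemma \ref{LemmaTildeKEtale} applied to $\bar h\colon S'\to LG/I$ produces an étale covering $\beta\colon\wt S\to S'$ and a lift $\tilde h\in LG(\wt S)$ of $\bar h|_{\wt S}$; by construction $\tilde h$ is bounded by $\mu'$ and $\tilde h^{-1}g_{\wt S}\s(\tilde h)\in I(\wt S)xI(\wt S)$, as required. The principal obstacle is the production of the finite surjective integral cover $\alpha\colon S'\to S$ in the second paragraph: it relies crucially on the projectivity of $\CR_{\mu'}\to S$, which in turn rests on the closedness of $IxI$ inside the Newton stratum (Lemma \ref{lemprep1}). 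It is this projectivity of the bounded moduli of quasi-isogenies that substitutes, in the reductive setting, for the combinatorial \cite[Lemma 9]{Zink} available only for $\GL_n$.
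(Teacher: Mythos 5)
Your argument follows the paper's own proof quite closely in outline: take the closure of a suitable point inside a bounded Schubert variety, apply Stein factorization, and lift by an \'etale cover via Lemma~\ref{LemmaTildeKEtale}; Lemma~\ref{lemprep1} (closedness of $IxI$ in the Newton stratum $[b]$) is used in both to make the relevant locus closed in the projective Schubert variety. But there is a genuine gap at exactly the step the paper flags as the hard one.

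Your claim that the proper piece $S_1\to S'$ of the Stein factorization can be made an isomorphism ``after replacing $S'$ by a suitable further integral finite cover'' by appeal to ``Zariski's connectedness theorem'' is incorrect. A proper birational morphism with geometrically connected fibers, even onto a normal target, need not be finite --- the blowup of a smooth surface at a point is proper, birational, has connected fibers, has normal (even smooth) target, and is not an isomorphism. No finite base change on the target repairs a positive-dimensional fiber of $S_1\to S'$. Nothing you say rules out such fibers: what is missing is any bound on the fiber dimension of $\CR_{\mu'}\to S$, i.e.\ on the affine Deligne--Lusztig variety $X_x(b_s)$ inside a bounded piece of $\Flag$. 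Projectivity of $\CR_{\mu'}\to S$ (which you correctly deduce from Lemma~\ref{lemprep1}) gives properness, not quasi-finiteness.

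This is precisely where the paper requires a real geometric input, namely Proposition~\ref{PropOZ2.5}: a local $G$-shtuka over a field admits only finitely many quasi-isogenies bounded by a fixed $\mu$ from completely slope divisible sources. Via Proposition~\ref{PropHV5.2} and Corollary~\ref{CorCSD}, a geometric point of $\CR_{\mu'}$ over $s\in S$ is exactly such a quasi-isogeny, so the fibers of $\CR_{\mu'}\to S$ are finite; combined with \cite[Lemma 2.6]{OortZink} and the fact that $S'$ sits as a monomorphism inside $\ol C\times_{\BF_q}S$, this forces $S'$ to coincide with its Stein factorization. Without this finiteness input (equivalently, without knowing a priori that $X_x(b)$ is $0$-dimensional for a $P$-fundamental $x$ --- a fact that is only established later, in Corollary~\ref{appladlv}, as a \emph{consequence} of this theorem), your argument does not close.
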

The Theorem is an analog of \cite[Lemma 2.4]{OortZink}.

\begin{proof}
Let $\ol L$ be an algebraic closure of the function field $L$ of $S$. By Remark \ref{remfundalc} \ref{a} there is an $\bar h\in LG(\ol L)$ with $\bar h^{-1}g_{\ol L}\,\s(\bar h)=x$. The image of $\bar h$ in the affine flag variety lies inside a closed Schubert cell $\ol C\subset\Flag$. Note that $\ol C$ is a scheme which is projective over $\BF_q$. Hence by multiplying $\bar h$ with an element of $I(\ol L)$ on the right we may assume that there is a finite field extension $L'/L$ and an $h'_{L'}\in LG(L')$ with $(h'_{L'})^{-1}g_{L'}\,\s(h'_{L'})\in I(L')xI(L')$. Let $ S'$ be the scheme theoretic closure of the $L'$-valued point $h'_{L'}$ inside $\ol C\times_{\BF_q}S$, and let $\alpha:S'\to S$ and $h':S'\to\ol C\subset\Flag$ be the projections. Then $S'$ is irreducible and reduced. Over an \'etale covering $\beta:\wt S\to S'$ the map $h'$ is represented by an element $\tilde h\in LG(\wt S)$ due to Lemma~\ref{LemmaTildeKEtale}.

Consider the $\wt S$-valued point $\tilde h^{-1}g_{\wt S}\,\s(\tilde h)$ of the affine flag variety $\Flag$. By construction it lies generically in $IxI/I$. So it lies inside the closed Schubert cell $\ol{IxI/I}\subset \Flag$. By assumption it lies in the Newton stratum of $x$ inside $\ol{IxI/I}$. By Lemma \ref{lemprep1} $IxI/I$ is closed in this Newton stratum. Hence the point lies in $IxI$. By Lemma \ref{lemprep2} we find that $\tilde h^{-1}g_{\wt S}\,\s(\tilde h)$ lies in $I(\wt S)xI(\wt S)$.

It remains to show that the morphism $\alpha:S'\to S$ is finite. For this consider the Stein factorization $S'\to S''\to S$ with $S''=\Spec\alpha_\ast\CO_{S'}$. Then $S''\to S$ is finite and $\gamma:S'\to S''$ has connected fibers \cite[III, Corollaire 4.3.2]{EGA}. We reduce to the following

\medskip

\noindent
{\it Claim.} For any point $s\in S''$ the fiber $S'_s=\gamma^{-1}(s)$ is mapped under $h':S'\to\ol C$ to a single point in $\ol C$.

\medskip

\noindent
Indeed, by \cite[Lemma 2.6]{OortZink} the claim implies that the morphism $h':S'\to\ol C$ factors through $S''$, and thus $S'=S''$ by definition of $S'$. Let us prove the claim. Consider the local $G$-shtuka with $I$-structure $\ul\CH=(I_S,g\s)$ over $S$. By Proposition~\ref{PropHV5.2} the map $h':S'\to\Flag$ uniquely determines a local $G$-shtuka with $I$-structure $\ul\CG$ over $S'$ together with a quasi-isogeny $\rho:\ul\CG\to\ul\CH_{S'}$ up to isomorphism. Its pullback to $\wt S$ is trivialized $\CG_{\wt S}\cong(I_{\wt S},\tilde h^{-1}g_{\wt S}\,\s(\tilde h)\s\bigr)$ and $\rho$ is given by multiplication with $\tilde h$; compare the proof of \cite[Theorem 6.2]{HV1}. The quasi-isogeny $\rho$ is bounded by some dominant $\mu\in X_\ast(\T)$ because $h'$ lies in the Schubert cell $\ol C(S')$. Since $\tilde h^{-1}g_{\wt S}\,\s(\tilde h)\in I(\wt S)xI(\wt S)$, Corollary~\ref{CorCSD} tells us that $\ul\CG$ is completely slope divisible. Now fix a point $s\in S''$ and consider the base changes of $\ul\CH$ to the residue field $k(s)$, and of $\ul\CG$ to the fiber $S'_s$ over $s$. Then we are in the situation of Proposition~\ref{PropOZ2.5} and we conclude that the image of $h':S'_s\to\ol C$ is a finite set of points. Since $S'_s$ is connected, this image must be a single point in $\ol C$. This proves the claim and the theorem.
\end{proof}

\begin{corollary}\label{CorSlopeFilt}
Let $R$ be an integral noetherian complete local ring with algebraically closed residue field $k$. Let $\ul\CG$ be a  local $G$-shtuka over $\Spec R$ with $K$-structure whose quasi-isogeny class in all geometric points of $R$ is constant. Let $x$ be a $P$-fundamental alcove corresponding to this quasi-isogeny class. Let $\rho_0$ be a quasi-isogeny between the fiber of $\ul\CG$ over $k$ and $(K_k,x\s)$. Then for each fixed $d\in \BN$ there exists a finite integral extension $R''\supset R$ and a bounded quasi-isogeny $\rho:\ul\CH\to\ul\CG_{R''}$ of local $G$-shtukas with $K$-structure over $\Spec R''$ with reduction $\rho_k=\rho_0$, where $\ul\CH$ has a trivialization $\ul\CH\cong (K_{R''}, x\ol n\s)$ with $\ol n\in I_{d,\ol N}(R'')$ and reduction $\ol n_k=1$. 
\end{corollary}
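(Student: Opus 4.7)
The strategy is to chain together Theorem \ref{ThmOZ2.4}, Proposition \ref{PropSlopeFilt}, a Hensel-type lift trivialising the $M$-factor, and Lemma \ref{LemmaTrivN}, while carefully matching the reduction at $k$ with $\rho_0$. First trivialise $\ul\CG\cong(K_R,g\s)$ for some $g\in LG(R)$; this is possible because $R$ is strictly Henselian. Apply Theorem \ref{ThmOZ2.4} to $\Spec R$, and take a section of the resulting \'etale covering (which exists since the finite integral extension $R\subset R_1$ produced is complete local with algebraically closed residue field): this yields an element $\tilde h_1\in LG(R_1)$ with $\tilde h_1^{-1}g\s(\tilde h_1)\in I(R_1)xI(R_1)$. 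Proposition \ref{PropSlopeFilt} then gives $h_2\in I(R_1)$ so that, setting $\tilde h_0:=\tilde h_1h_2$, we have $\tilde h_0^{-1}g\s(\tilde h_0)=xm\bar n$ with $m\in I_M(R_1)$ and $\bar n\in I_{\ol N}(R_1)$.

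Next I kill the factor $m$. The local $M$-shtuka $(K_M,xm\s)$ over $R_1$ has constant $\sigma$-conjugacy class $[x]_M$: at each geometric point $s$, $(xm)_s\in I_M(k(s))xI_M(k(s))$ is $M$-$\sigma$-conjugate to $x$ by Remark \ref{remfundalc}(e) applied inside $M$, where $x$ is $M$-fundamental with $P_M=M$ (its Newton point $\nu_x$ is central in $M$). Over $k$ the same remark produces $h_{M,0}\in I_M(k)$ with $h_{M,0}^{-1}(xm)_k\s(h_{M,0})=x$. After bounding by a sufficiently large dominant coweight $\mu$, the scheme $Y_\mu:=\{h\in LM_\mu:h^{-1}(xm)\s(h)=x\}$ over $\Spec R_1$ is a $J_x^M$-torsor of finite type and is smooth over $R_1$ in a neighbourhood of $h_{M,0}$: this combines smoothness of $J_x^M$ from Proposition \ref{PropFieldOfDefQIS} with the surjectivity (over $k$) of the Lang-type differential $v\mapsto\s(v)-\Ad(x^{-1})v$ of the twisted $M$-conjugation map at $h_{M,0}$. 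The strict Henselianness of $R_1$ then lifts $h_{M,0}$ to $h_M\in LM(R_1)$. Replacing $\tilde h_0$ by $\tilde h_0h_M$ we obtain $\tilde h_0^{-1}g\s(\tilde h_0)=x\bar n'$ with $\bar n':=\s(h_M)^{-1}\bar n\s(h_M)\in I_{\ol N}(R_1)$, using that $I_M$ normalises $I_{\ol N}$.

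Finally I match $\rho_0$ at $k$ and apply Lemma \ref{LemmaTrivN}. Any two quasi-isogenies $(K_k,x\s)\to\ul\CG_k$ differ by an element of the quasi-isogeny group $J_x\subset LG(\BF_{q^s})$ for some $s$ by Corollary \ref{PropQISGpDecent}, so after a further finite integral extension of $R_1$ to contain $\BF_{q^s}$, I may, by exploiting the $J_x^M$-freedom in the choice of $h_{M,0}$ and applying Corollary \ref{Cor4.10'} over the perfect field $k$ to $x\bar n'_k$, simultaneously arrange that the reduction of $\tilde h_0$ equals $\rho_0$ and $\bar n'_k=1$. Lemma \ref{LemmaTrivN} applied to $\bar n'$ with the fixed $d$ then yields a finite surjective radicial (hence finite integral) extension $R_1\subset R''$ and $h_{\ol N}\in x^{-\el_0}I_{\ol N}(R'')x^{\el_0}$ with $h_{\ol N,k}=1$ and $h_{\ol N}^{-1}(x\bar n')\s(h_{\ol N})=x\bar n''$ for some $\bar n''\in I_{d,\ol N}(R'')$. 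Setting $\ul\CH:=(K_{R''},x\bar n''\s)$ and taking $\rho$ to be the bounded quasi-isogeny corresponding to $\tilde h_0h_{\ol N}\in LG(R'')$ completes the construction. The main obstacle is the Henselian lift in the middle step: it requires smoothness of the torsor $Y_\mu$ of $M$-conjugators, which rests on smoothness of $J_x^M$ (Proposition \ref{PropFieldOfDefQIS}) together with surjectivity of the Lang-type differential at the basic point, a consequence of $x$ being basic in $M$.
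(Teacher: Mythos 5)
Your proposal follows the same overall skeleton as the paper's proof: trivialize over the strictly Henselian $R$, apply Theorem~\ref{ThmOZ2.4} and observe that the \'etale covering $\beta$ is already an isomorphism, invoke Proposition~\ref{PropSlopeFilt} to reach $xm\bar n$ with $m\in I_M$, $\bar n\in I_{\ol N}$, kill the $M$-factor, use Corollary~\ref{Cor4.10'} plus pro-unipotence of $I_{\ol N}$ to arrange $\bar n'_k=1$, match $\rho_0$ via the description of $J_x$ from Corollary~\ref{PropQISGpDecent}, and finally apply Lemma~\ref{LemmaTrivN}. That is the correct chain.

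The one substantive deviation is the step that $\sigma$-conjugates $xm$ to $x$ over $R_1$. The paper simply cites \cite[Proposition~8.1]{HV1}, which asserts exactly this (that a basic element of $LM$ over a strictly Henselian local ring can be $\sigma$-conjugated in $M$ to any $\sigma$-conjugate element of the residue field). You instead try to reprove this on the spot by a Hensel/torsor argument: you form $Y_\mu=\{h:h^{-1}(xm)\s(h)=x\}$, assert it is a $J_x^M$-torsor of finite type that is smooth over $R_1$, and lift. This is plausible in spirit but the assertions are left unjustified, and they are not innocent: (i) you do not explain why $Y_\mu$ is a \emph{scheme of finite type} over $R_1$ --- $LM_\mu$ is not of finite type (only $LM_\mu/K_{0,M}$ is), and cutting by the $\sigma$-conjugation equation does not visibly reduce to finite type without further argument; (ii) the ``Lang-type differential'' $v\mapsto\s(v)-\Ad(x^{-1})v$ is additive but not $\CO$-linear, and in fact the term $\s(v)$ has vanishing differential since $\s$ is the $q$-power Frobenius, so the relevant observation is that the Jacobian equals $-\Ad(x^{-1})$, which you do not make; (iii) showing $Y_\mu$ is an actual torsor (nonempty fibers) rather than a pseudo-torsor is precisely what Remark~\ref{remfundalc}\ref{e} gives over algebraically closed fields, but uniformity of the bound $\mu$ across all points of $\Spec R_1$ is not addressed. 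None of these is obviously fatal, but as written the middle step is a genuine gap; the paper avoids it entirely by quoting the dedicated result. Two smaller remarks: the ``further finite integral extension of $R_1$ to contain $\BF_{q^s}$'' is unnecessary, since $R_1$ is Henselian with residue field containing $\ol{\BF}_q$ and hence already contains $\BF_{q^s}$; and the boundedness of $\rho$ at the end is asserted rather than justified, while the paper notes it is automatic by \cite[Lemma~3.13]{HV1} because $R''$ is integral.
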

Note that for $K=I$, the local $G$-shtuka with $I$-structure $\ul\CH$ is completely slope divisible, compare Corollary~\ref{CorCSD}.
\begin{proof}
Since $R$ has no non-trivial \'etale coverings, there is a trivialization $\ul\CG=(K_R,g\s)$. Thus the assertion is equivalent to the existence of $R''$ and an element $h\in LG(R'')$ with reduction $\rho_0$ over the special point such that $h^{-1}g\s(h)=x\ol n$ with $\ol n\in I_{d,\ol N}(R'')$. Note that by \cite[Lemma 3.13]{HV1} the boundedness of $h$ and $\rho$ is automatic because $R''$ is integral.

Theorem~\ref{ThmOZ2.4} yields a finite surjective morphism $S'=\Spec R'\to S=\Spec R$ with $R'$ integral and an element $h_1\in LG(R')=G\bigl(R'\dpl z\dpr\bigr)$ with $b':=(h_1)^{-1}g_{R'}\s(h_1)\in I(R')xI(R')$. Indeed, observe that the \'etale morphism $\beta:\wt S\to S'$ from Theorem~\ref{ThmOZ2.4} is an isomorphism because also $R'$ is complete with algebraically closed residue field. By Proposition~\ref{PropSlopeFilt} we may further assume that $b'=xm\bar n_1\in x\,I_M(R')I_{\ol N}(R')$ where $M$ is the centralizer of the $M$-dominant Newton point of $x$ and $P\subset MN$. Since $xm$ is basic in $M$, \cite[Proposition 8.1]{HV1} yields an element $h_2\in M(R'\dpl z\dpr)$ which $\sigma$-conjugates $xm$ to $x$. Hence $h_2^{-1}b'\s(h_2)=x\bar n'$ for $\bar n':=\s(h_2)^{-1}\bar n_1\s(h_2)\in\ol N(R'\dpl z\dpr)$. Let $ \bar n'_k$ be the reduction of $\bar n'$ in the special point. Then $\bar n'_k\in x^{-l_0}I_{\ol N}(k)x^{l_0}$ for some $l_0\in\BZ$ by Lemma~\ref{LemmaBoundedN}. Corollary~\ref{Cor4.10'} shows that there is an $h_{3,k}\in x^{-l_0}I_{\ol N}(k)x^{l_0}$ with $h_{3,k}^{-1}x\bar n'_k \s(h_{3,k})=x$. We lift the element $h_{3,k}$ to an element $h_3\in x^{-l_0}I_{\ol N}(R')x^{l_0}$. This is possible since $I_{\ol N}$ is pro-unipotent. Replacing $h_2$ by $h_2h_3$ we see that we may in addition assume that $\bar n'_k=1$. The given quasi-isogeny $\rho_0$ corresponds to an element $h\in LG(k)$ with $j_0=h^{-1}(h_1h_2)_k\in J_{x}(k)$. There is a lift $j\in J_x\cap LG(R')$ of $j_0$ because $x\in LG(\BF_q)$ is decent, hence $j_0\in LG(\BF_{q^s})$ by Corollary~\ref{PropQISGpDecent}, and $\BF_{q^s}\subset R'$ since $k$ contains $\BF_q^\alg$ and $R'$ is henselian. Replacing $h_1h_2$ by $h_1h_2j^{-1}$ we see that we may in addition assume that $(h_1h_2)_k$ yields the given isogeny $\rho_0$ between $\ul\CG_k$ and $(K_k,x\s)$. We now apply Lemma \ref{LemmaTrivN} for $\bar n'$ (with some possibly different $l_0$) and the constant $d$ and obtain a finite extension $R''$ of $R'$ and an element $h_4\in\ol N(R''\dpl z\dpr)$ with $(h_4)_k=1$ such that $h_4^{-1}x\bar n'\s(h_4)=x\ol n$ with $\ol n\in I_{d,\ol N}(R'')$. Setting $h=h_1h_2h_4$ proves the assertion.
\end{proof}

%
%

\section{The general linear group}\label{secgln}

In this section we compare our notion of completely slope divisible local $G$-shtukas to Zink's notion of completely slope divisible $p$-divisible groups. To make the parallel more visible we also explain the relation to local shtukas (which are described via locally free sheaves and correspond to local $G$-shtukas for $G=\GL_\rr$). We denote by $\s$ the endomorphism of $\CO_S\dbl z\dbr$ and $\CO_S\dpl z\dpr$ that acts as the identity on the variable $z$, and as $b\mapsto b^q$ on local sections $b\in \CO_S$. For a sheaf $V$ of $\CO_S\dbl z\dbr$-modules on $S$ we set $\s V:=V\otimes_{\CO_S\dbl z\dbr,\s}\CO_S\dbl z\dbr$. 

\begin{definition}
A \emph{local shtuka} over an $\BF_q$-scheme $S$ (of rank $\rr$) is a pair $\ulV=(V,\phi)$ where $V$ is a sheaf of $\CO_S\dbl z\dbr$-modules on $S$ which Zariski-locally is free of rank $\rr$, together with an isomorphism of $\CO_S\dpl z\dpr$-modules $\phi:\s V\otimes_{\CO_S\dbl z\dbr}\CO_S\dpl z\dpr\isoto V\otimes_{\CO_S\dbl z\dbr}\CO_S\dpl z\dpr$.

A \emph{quasi-isogeny} between local shtukas $(V,\phi)\to(V',\phi')$ is an isomorphism of $\CO_S\dpl z\dpr$-modules $f:V\otimes_{\CO_S\dbl z\dbr}\CO_S\dpl z\dpr\isoto V'\otimes_{\CO_S\dbl z\dbr}\CO_S\dpl z\dpr$ with $\phi'\s (f)=f\phi$.
\end{definition}

\begin{lemma}[{\cite[Lemma 4.2]{HV1}}] \label{LemmaHV3.2}
The quasi-isogeny categories of local $\GL_\rr$-shtukas with $K_0$-structure over $S$ and of local shtukas over $S$ are equivalent. In particular the trivialized local $\GL_\rr$-shtuka with $K_0$-structure $(K_{0,S},g\s)$ over $S$ corresponds to the trivialized local shtuka $(\CO_S\dbl z\dbr^{\oplus \rr},\phi=g\s)$. \qed
\end{lemma}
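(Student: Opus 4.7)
The plan is to establish the equivalence via the standard Tannakian dictionary for $\GL_\rr$, which identifies $\GL_\rr$-torsors with vector bundles of rank $\rr$.

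First, I would set up the dictionary on objects. A $K_0$-torsor $\CG$ on $S$ for the étale topology gives rise to the locally free $\CO_S\dbl z\dbr$-module of rank $\rr$
\[
V(\CG)\;:=\;\CG\times^{K_0}\CO_S\dbl z\dbr^{\oplus\rr},
\]
using the tautological representation $K_0\hookrightarrow \GL_\rr\bigl(\CO_S\dbl z\dbr\bigr)$; conversely, given a locally free rank-$\rr$ module $V$, the sheaf of $\CO_S\dbl z\dbr$-linear frames $\underline{\Isom}\bigl(\CO_S\dbl z\dbr^{\oplus\rr},V\bigr)$ is a $K_0$-torsor (étale-locally, in fact Zariski-locally, trivial because $V$ is locally free). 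This sets up an equivalence between $K_0$-torsors and locally free $\CO_S\dbl z\dbr$-modules of rank $\rr$. The analogous statement holds verbatim for $LG$-torsors and locally free $\CO_S\dpl z\dpr$-modules of rank $\rr$, and the two dictionaries are compatible in the sense that the $LG$-torsor $\CL\CG$ associated to $\CG$ corresponds to $V(\CG)\otimes_{\CO_S\dbl z\dbr}\CO_S\dpl z\dpr$. Under this identification, a Frobenius isomorphism $\phi:\s\CL\CG\isoto\CL\CG$ of $LG$-torsors translates into an isomorphism $\s V(\CG)\otimes\CO_S\dpl z\dpr\isoto V(\CG)\otimes\CO_S\dpl z\dpr$ of $\CO_S\dpl z\dpr$-modules, which is the Frobenius datum of a local shtuka. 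In particular, the trivialized shtuka $(K_{0,S},g\s)$ is sent to $(\CO_S\dbl z\dbr^{\oplus\rr},g\s)$ as claimed in the second sentence.

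Next, I would check that the functor is compatible with quasi-isogenies. A quasi-isogeny on the torsor side is, by Definition~\ref{DefQIsog}, an isomorphism $f:\CL\CG\isoto\CL\CG'$ of $LG$-torsors commuting with Frobenius, i.e.~$\phi'\s(f)=f\phi$. Under the dictionary for $LG$-torsors, such an $f$ corresponds precisely to an $\CO_S\dpl z\dpr$-linear isomorphism of the localized modules $V(\CG)\otimes\CO_S\dpl z\dpr\isoto V(\CG')\otimes\CO_S\dpl z\dpr$ intertwining the Frobenii — which is exactly the definition of a quasi-isogeny between the corresponding local shtukas. This makes the functor fully faithful at the level of quasi-isogenies, and essentially surjective by the construction of the inverse functor $V\mapsto\underline{\Isom}\bigl(\CO_S\dbl z\dbr^{\oplus\rr},V\bigr)$.

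The main point requiring care — and the only nontrivial ingredient — is the equivalence between étale $K_0$-torsors and locally free $\CO_S\dbl z\dbr$-modules of rank $\rr$. For this, one uses that $K_0$-torsors for the fpqc topology are already étale-locally trivial (cited from \cite[Proposition 2.1]{HV1} in the introduction), together with fpqc descent of quasi-coherent sheaves along the surjection $\CG\to S$, so that $V(\CG)$ is well-defined as an $\CO_S\dbl z\dbr$-module; its local freeness of rank $\rr$ follows by passing to a trivialization of $\CG$. The remaining verifications (compatibility of contracted product with associated $LG$-torsor, and naturality of the Frobenius translation) are then formal.
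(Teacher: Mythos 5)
The paper gives no proof of this lemma; it is cited verbatim from \cite[Lemma 4.2]{HV1}, as the \qed immediately after the statement indicates. So there is no argument in the present paper to compare against, and the only assessment I can make is of your proof on its own merits.

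Your proof is the standard Tannakian dictionary for $\GL_\rr$, and it is the right approach; this is almost certainly the same argument that appears in \cite{HV1}. The assignment of $V(\CG)$ to $\CG$ by contracted product, the inverse via the frame sheaf $\underline{\Isom}(\CO_S\dbl z\dbr^{\oplus\rr},V)$, and the compatibility of the two dictionaries on the $LG$/$\CO_S\dpl z\dpr$ level are exactly what is needed, and the translation of $\phi$ and of quasi-isogenies is formal once the torsor--bundle dictionary is in place. One point you slide over a little too quickly: the definition of a local shtuka in Section \ref{secgln} requires the sheaf $V$ to be \emph{Zariski}-locally free of rank $\rr$, whereas ``passing to a trivialization of $\CG$'' only exhibits $V(\CG)$ as \emph{\'etale}-locally free, since $\CG$ is only trivial over an \'etale cover. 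To close the gap you need the further observation that an \'etale-locally free $\CO_S\dbl z\dbr$-module of rank $\rr$ is in fact Zariski-locally free. One way to see this: its reduction $V/zV$ is \'etale-locally free over $\CO_S$, hence Zariski-locally free by usual descent for vector bundles; over an affine open $U$ where $V/zV$ trivializes, the induced $K_0$-torsor is trivial modulo each $K_n$ (because $H^1_{\text{\'et}}(U,K_1/K_n)$ vanishes, $K_1/K_n$ being an iterated extension of $\BG_a$'s and $U$ affine), and taking the limit gives a trivialization of the $K_0$-torsor over $U$, so $V|_U$ is free. This is a small technical supplement rather than a conceptual deficiency; the rest of your argument is sound.
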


\begin{definition}\label{DefSlopeFilt2}
For general $G$ a local $G$-shtuka with $K_0$-structure $\ul\CG=(\CG,\phi)$ over an $\BF_q$-scheme $S$ is called \emph{completely slope divisible in the sense of Zink} if there exists a standard parabolic subgroup $P\subset G$ with opposite parabolic $\ol P$, a local $\ol P$-shtuka with $L\ol P\cap K_0$-structure $\ul{\bar\CP}=(\bar\CP,\phi)$ over $S$, an integer $s>0$, and a central cocharacter $\mu:\BG_m\to M$ which is $G$-dominant ($M$ is the Levi component of $P$ containing $\T$) such that
\begin{enumerate}
\item $\ul\CG\cong i_\ast\ul{\bar\CP}$ where $i:\ol P\hookrightarrow G$ is the inclusion morphism,
\item If $\ul{\bar\CP}_{S'}$ is trivializable for some $S'\rightarrow S$ then with respect to every trivialization, $z^{-\mu}\phi^s$ is represented by an element of $\ol P\bigl(\Gamma(S',\CO_{S'})\dbl z\dbr\bigr)$. 
\end{enumerate}
\end{definition}
Assume that the second condition holds for some trivialization of $\ul\CG_{S'}$. The element $b$ obtained from any other trivialization differs from the old by $\sigma$-conjugation with an element of $\ol P\bigl(\Gamma(S',\CO_{S'})\dbl z\dbr\bigr)$. Thus it automatically also satisfies the second condition. Also, being completely slope divisible in Zink's sense is a local property.

\begin{proposition}
Let $G=\GL_\rr$. Then a local $\GL_\rr$-shtuka with $K_0$-structure $\ul\CG$ over $S$ is completely slope divisible in the sense of Zink if and only if its associated local shtuka $\ulV=(V,\phi)$ satisfies the following condition:

\noindent
There is a sequence of local quotient-shtukas $\ulV=\ulV_e\onto\ulV_{e-1}\onto\ldots\onto\ulV_1\onto\ulV_0=(0)$ and integers $t_1>\ldots>t_e$ and $s>0$ such that for all $1\le i\le e$
\begin{enumerate}
\item $z^{-t_i}\phi^s\bigl((\sigma^s)^\ast V_i\bigr)\subset V_i$ and
\item on $W_i:=\ker(V_i\onto V_{i-1})$ the map $z^{-t_i}\phi^s:(\sigma^s)^\ast W_i\isoto W_i$ is an isomorphism.
\end{enumerate}
\end{proposition}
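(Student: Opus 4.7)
The plan is to pass to an étale-local trivialization and match both conditions in explicit block-matrix form. Using Lemma~\ref{LemmaHV3.2} I first rewrite Zink's definition for $G=\GL_\rr$: a standard parabolic $P$ is block upper triangular with block sizes $(r_1,\ldots,r_e)$ from top-left to bottom-right, its opposite $\ol P$ is block lower triangular, and a reduction of the $K_0$-torsor underlying $\ul\CG$ to $\ol P\cap K_0$ corresponds to a filtration of $V$ by sub-$\CO_S\dbl z\dbr$-modules $0=U_0\subset U_1\subset\dotsm\subset U_e=V$ with $\rank(U_j/U_{j-1})=r_{e-j+1}$ and whose $\CO_S\dpl z\dpr$-localization is $\phi$-stable. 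A central $G$-dominant cocharacter of the Levi $M=\prod_i\GL_{r_i}$ has the shape $\mu=(t_1^{r_1},\ldots,t_e^{r_e})$ with $t_1\ge\dotsm\ge t_e$, and by merging blocks of equal weight I may assume these inequalities are strict.

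For the direction ``Zink $\Rightarrow$ filtration'' I will set $V_i:=V/U_{e-i}$, obtaining $V=V_e\onto\dotsm\onto V_0=(0)$ with $W_i\cong U_{e-i+1}/U_{e-i}$ of rank $r_i$. The $\ol P$-action on $V_i$ factors through the projection $\ol P\onto\ol P_i$ onto the block lower triangular group of the top $i$ blocks, and Zink's integrality $z^{-\mu}\phi^s\in\ol P(R\dbl z\dbr)$ (in a local trivialization with $R=\Gamma(S',\CO_{S'})$) descends to $z^{-\mu_i}\phi^s\in\ol P_i(R\dbl z\dbr)$ with $\mu_i=(t_1^{r_1},\ldots,t_i^{r_i})$. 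Writing $z^{-t_i}\phi^s=z^{\mu_i-t_i}\cdot(z^{-\mu_i}\phi^s)$ and noting that $\mu_i-t_i$ has coordinates $t_j-t_i\ge0$ for $j\le i$, the correction factor is integral, yielding condition (i). Restricted to $W_i$, where $\mu_i$ is the single scalar $t_i$, the map $z^{-t_i}\phi^s$ coincides with the $i$-th Levi factor of $z^{-\mu}\phi^s$ and lies in $\GL_{r_i}(R\dbl z\dbr)$, hence is an isomorphism -- condition (ii).

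For the converse I start from the filtration and set $U_{e-i}:=\ker(V\onto V_i)$. Each $U_{e-i}$ is a sub-$\CO_S\dbl z\dbr$-module whose $\CO_S\dpl z\dpr$-localization is $\phi$-stable, since the $V_i$ are shtuka quotients; the resulting flag gives a reduction to $\ol P\cap K_0$ for $\ol P$ the standard parabolic with block sizes $r_i:=\rank W_i$. With $\mu=(t_1^{r_1},\ldots,t_e^{r_e})$, verification of Zink's condition reduces to the following: in an étale-local splitting $V\cong\bigoplus_k W_k$ the matrix $(\phi^s_{k,l})$ must satisfy $\phi^s_{k,l}=0$ for $k<l$ (which is the $\ol P$-reduction, automatic) and $\phi^s_{k,l}((\sigma^s)^\ast W_l)\subset z^{t_k}W_k$ for $k\ge l$. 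This last inclusion, for a fixed row $k$ and simultaneously in all $l\le k$, is precisely condition (i) applied to the quotient $V_k=W_1\oplus\dotsm\oplus W_k$; running $k=1,\ldots,e$ covers every entry below the diagonal. Condition (ii) then places the diagonal blocks in $\GL_{r_k}(R\dbl z\dbr)$, so $z^{-\mu}\phi^s$ belongs to the group $\ol P(R\dbl z\dbr)$, as required.

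The main obstacle is really just combinatorial bookkeeping: one must keep track of the passage between the sub-filtration encoding the $\ol P$-reduction and the quotient filtration used in the proposition, and reconcile the single scalar $t_i$ in condition (i) with the full cocharacter $\mu$. The key observation making both directions work is the strict ordering $t_1>\dotsm>t_e$: on $V_i$ every weight of $\mu$ dominates $t_i$, so $z^{-t_i}$-integrality on $V_i$ is strictly weaker than $z^{-\mu}$-integrality there, and yet \emph{bundling} condition (i) across all $i$ recovers exactly the global $z^{-\mu}\phi^s\in\ol P(R\dbl z\dbr)$ of Zink.
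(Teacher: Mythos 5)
Your argument is correct and follows essentially the same route as the paper's proof: reduce (by enlarging $M$) to $\mu$ with strictly decreasing weights, trivialize locally so that the Frobenius lies in block lower-triangular form with respect to the $\ol P$-reduction, identify $V_i$ as the quotient by the bottom sub-bundle, and translate the $z^{-\mu}$-integrality of $\phi^s$ into conditions (a) and (b) by comparing $z^{-t_i}$ with $z^{-\mu}$ row-block by row-block, using the central and invertible diagonal blocks for condition (b). The only cosmetic difference is that you keep the sub-filtration $U_\bullet$ and the quotient filtration $V_\bullet$ separate and dual to one another, whereas the paper works directly with the quotients.
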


Note that the latter condition is indeed the same as complete slope divisibility in \cite[Definition 1.2]{OortZink} when one views local shtukas as analogs of the contravariant Dieudonn\'e modules of $p$-divisible groups. 

\begin{proof}
Assume that $\ul\CG$ is completely slope divisible in the sense of Zink and let $P,M,\mu,s,\ul{\bar\CP}$ be as in Definition~\ref{DefSlopeFilt2}. We may even assume that $M$ is the centralizer of $\mu$ after possibly enlarging $M$ and $P$. We pass to an \'etale covering $S'\to S$ in order to choose trivializations of $\ul{\bar\CP}$ and $\ul\CG$ such that $z^{-\mu}\phi^s=\bar p\s$ with $\bar p\in\ol P\bigl(\Gamma(S',\CO_{S'})\dbl z\dbr\bigr)$. By our choice of $B$ as the upper triangular matrices, $M$ consists of all block diagonal matrices of fixed block sizes $d_1,d_2,\ldots,d_e$ and $\ol P$ consists of the corresponding lower triangular block matrices. The successive projections onto the upper left blocks uniquely defines a sequence of local quotient-shtukas $\ulV_{S'}=\ulV_e\onto\ulV_{e-1}\onto\ldots\onto\ulV_1\onto\ulV_0=(0)$ over $S'$. Namely $V_i$ is spanned by the first $d_1+\ldots+d_i$ basis vectors. By its uniqueness this sequence descends to $S$. The $\GL_\rr$-dominant cocharacter $\mu$ can be written as 
\[
\mu\;=\;(t_1,\ldots,t_1\,,\,t_2,\ldots,t_2\,,\,\ldots\,,\,t_e,\ldots,t_e)
\]
with $t_1>t_2>\ldots>t_e$ since $M$ is the centralizer of $\mu$. Then $z^{-t_i}\phi^s=z^{\mu-t_i}\bar p$ and this implies (a) and (b) because $\mu$ is central in $M$. 

Conversely assume that $\ul\CG$ satisfies the condition of the proposition. Over a Zariski-covering $S'\to S$ we can successively choose compatible trivializations of all $W_i$ and $V_i$ as free $\CO_{S'}\dbl z\dbr$-modules. We order the obtained basis vectors of $V$ such that the first $d_1+\ldots+d_i$ of them project onto a basis of $V_i$, where $d_i=\rk W_i$. With respect to this basis the Frobenius of $\ulV$ is given by an element $\bar p'\in L\ol P(S')$ for $\ol P$ the parabolic consisting of the lower triangular block matrices whose Levi component $M$ contains all block diagonal matrices with block sizes $d_1,\ldots,d_e$. The opposite parabolic $P$ is standard. The local $\ol P$-shtuka with $L\ol P\cap K_0$-structure $\ul{\bar\CP}'=\bigl((L\ol P\cap K_0)_{S'},\,\bar p'\s\bigr)$ over $S'$ descends to a local $\ol P$-shtuka $\ul{\bar\CP}$ over $S$ which satisfies $\ul\CG\cong i_\ast\ul{\bar\CP}$. Let $\mu$ be the $\GL_\rr$-dominant cocharacter with weights $t_i$ of multiplicity $d_i$. Then $\mu:\BG_m\to\T\subset M$ is central in $M$. By (a) the Frobenius $z^{-\mu}\phi^s=z^{-\mu}\bar p'\s(\bar p')\cdot\ldots\cdot(\sigma^{s-1})^\ast(\bar p')$ lies in $\Gamma(S',\CO_{S'})\dbl z\dbr^{\rr\times \rr}$ By (b) its Levi part, that is its semi-simplification, lies in $M\bigl(\Gamma(S',\CO_{S'})\dbl z\dbr\bigr)$. Altogether this implies that $z^{-\mu}\phi^s\in\ol P\bigl(\Gamma(S',\CO_{S'})\dbl z\dbr\bigr)$ and the proposition is proved.
\end{proof}

\begin{lemma}
A local $G$-shtuka with $K_0$-structure $\ul\CG$ over $S$ which is completely slope divisible (in the sense of Definition \ref{DefSlopeFilt1}) is completely slope divisible in the sense of Zink.
\end{lemma}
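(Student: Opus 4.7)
The plan is to verify Zink's two conditions from Definition \ref{DefSlopeFilt2} by starting from an \'etale-local normal form for complete slope divisibility in the sense of Definition \ref{DefSlopeFilt1} and then computing $z^{-\mu}\phi^s$ directly.

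First I will use Corollary \ref{CorCSD} together with Proposition \ref{PropSlopeFilt} to arrange, \'etale-locally on $S$, a trivialization in which $\phi=g\s$ with $g=xp$, $x$ a $P$-fundamental alcove, and $p\in I_M(S)I_{\ol N}(S)$; by Remark \ref{remfundalc}\ref{c} I may enlarge $P$ so that its Levi $M$ equals the centralizer of the $M$-dominant Newton point of $x$. Since Zink's definition demands a \emph{standard} parabolic, I next use Remark \ref{remfundalc}\ref{b} to write $x=w^{-1}x_b w$ with $x_b\in\wt W_{M_\nu}$ the standard representative of $[x]$ and $w\in W$, so that $P_0:=wPw^{-1}$ is standard with Levi $M_\nu=wMw^{-1}$. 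Changing the $K_0$-trivialization by a lift of $w$ to $N_{\T}(\BF_q)\subset K_0(\BF_q)$ (a change that would break the $I$-structure but is harmless for the $K_0$-structure) replaces $g$ by $x_b p'$ with $p':=wpw^{-1}\in wIw^{-1}\cap L\ol P_0$; transporting $\ul{\bar\CP}$ along the isomorphism $\ol P\isoto\ol P_0$, $q\mapsto wqw^{-1}$, and then inducing the structure group from $L\ol P_0\cap wIw^{-1}$ up to $L\ol P_0\cap K_0$ supplies the $\ol P_0$-shtuka required by Definition \ref{DefSlopeFilt2}.

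Then I will choose $s>0$ with $x_b^s=z^\mu\in X_*(\T)$, which exists because the image of $x_b$ in $W_{M_\nu}$ has finite order. The resulting cocharacter $\mu=s\nu$ (with $\nu$ the Newton point of $x_b$) is central in $M_\nu$ and $G$-dominant by the defining property of $M_\nu$. A short telescoping computation using $\s(x_b)=x_b$ then gives
\[
\phi^s\;=\;g'\s(g')\cdots\s^{s-1}(g')\cdot\s^s\;=\;z^\mu\cdot C_s\cdot\s^s,\qquad C_s\;:=\;\prod_{j=0}^{s-1}\bigl(x_b^{-(s-1-j)}\,\s^j(p')\,x_b^{s-1-j}\bigr),
\]
so that $z^{-\mu}\phi^s$ is represented by $C_s\in L\ol P_0$, and the whole question reduces to showing $C_s\in K_0$.

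The hard part is exactly this last step: Zink's condition lives at the hyperspecial level, whereas Definition \ref{DefSlopeFilt1} lives at the Iwahori level, and $x_b$ does not normalize $K_{0,M_\nu}$ in general (as already suggested by Example \ref{ExFundAlcoveGLn}). My argument will be that each factor of $C_s$ rewrites as $x_b^{-i}\s^j(p')x_b^i=w\cdot\bigl(x^{-i}\s^j(p)x^i\bigr)\cdot w^{-1}$ via $\s(w)=w$ and the definitions of $x_b,p'$. The fundamentality of $x$ supplies $xI_Mx^{-1}=I_M$ and $x^{-1}I_{\ol N}x\subset I_{\ol N}$, whence by induction $x^{-i}I_{\ol P}x^i\subset I_M\cdot I_{\ol N}=I_{\ol P}\subset I\subset K_0$; combined with $wK_0w^{-1}=K_0$ (which holds because $w$ lifts to $K_0(\BF_q)$), this places each factor, and therefore $C_s$, inside $L\ol P_0\cap K_0=\ol P_0\bigl(\Gamma(S',\CO_{S'})\dbl z\dbr\bigr)$, completing the verification.
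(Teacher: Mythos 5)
Your proof is correct and runs on the same engine as the paper's proof: trivialize so that $\phi=xp\,\s$ with $p\in I_MI_{\ol N}$, telescope $\phi^s$ into $z^\mu$ times a product of $x$-conjugates of $\s^j(p)$, and invoke the $P$-fundamentality relations $xI_Mx^{-1}=I_M$, $x^{-1}I_{\ol N}x\subset I_{\ol N}$ to place that product in $I_MI_{\ol N}\subset K_0\cap L\ol P$. The genuine value you add is the explicit conjugation by $w$ to the standard representative $x_b=wxw^{-1}$ and the standard parabolic $\ol P_0=w\ol Pw^{-1}$, so that the resulting $\mu=x_b^s=z^{s\nu}$ is central in $M_\nu$ and $G$-dominant as Definition \ref{DefSlopeFilt2} actually requires. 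The paper's own proof just sets $\mu:=x^s$ and works with $\ol P$ directly; but since $\ol P$ need not be standard and $\mu=x^s=z^{s\nu_x}$ with $\nu_x=w^{-1}\nu$ need not be $G$-dominant (Example \ref{ExFundAlcoveGLn} exhibits exactly this), that step is, at best, being left implicit. Your version makes it explicit, and does so correctly: $w$ lifts to $N_\T(\BF_q)\subset K_0(\BF_q)$ so changing the $K_0$-trivialization is legitimate, the conjugated factors $w(x^{-i}\s^j(p)x^i)w^{-1}$ land in $wK_0w^{-1}=K_0$, and $L\ol P_0\cap K_0=\ol P_0\bigl(\Gamma(S,\CO_S)\dbl z\dbr\bigr)$ since $\ol P_0$ is closed in $G$ over $\BF_q$. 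In short: same approach, with the standard-parabolic bookkeeping that the paper skips filled in correctly.
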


\begin{proof}
As both conditions are local on $S$ we may replace $S$ by an \'etale covering and assume that $\CG$ is trivializable. Since $\ul\CG$ is completely slope divisible we may choose a trivialization $\ul\CG\cong(K_{0,S},b\s)$ with $b=xm\bar n$ and $m\in I_M(S),\,\bar n\in I_{\ol N}(S)$. Then
\[
\phi^s\;=\;x^s\cdot x^{-(s-1)}m\bar nx^{s-1}\cdot\ldots\cdot x^{-1}(\sigma^{s-2})^\ast(m\bar n)x\cdot (\sigma^{s-1})^\ast(m\bar n)\;=:\;x^s\cdot\bar p\,.
\]
Since $x$ is $P$-fundamental we obtain $\bar p\in I_M(S)I_{\ol N}(S)\subset \ol P\bigl(\Gamma(S,\CO_{S})\dbl z\dbr\bigr)$. Taking $s>0$ such that $\mu:=x^s\in X_\ast(\T)\subset\wt W$ the assertion follows.
\end{proof}

%
%

\section{Central leaves and the product structure on Newton strata}\label{secadlv}

\begin{definition}\label{DefCentralLeaf}
Let $\BF/\BF_q$ be a field extension and let $\ul\BG$ be a local $G$-shtuka with $K$-structure over $\BF$. Let $S$ be a noetherian $\BF$-scheme and let $\ul\CG$ be a bounded local $G$-shtuka with $ K$-structure over $S$. Consider the subset
$$
\CC_{\ul\BG,S}=\bigl\{s\in S: \ul\BG_{k}\cong\ul\CG_s\otimes_{k(s)}k \text{ over an algebraically closed field } k/k(s)\bigr\}.$$
By Corollary~\ref{ThmCentralLeaf} below it is a locally closed subscheme. Any geometrically irreducible component of $\CC_{\ul\BG,S}$ with the induced reduced subscheme structure is called a \emph{central leaf corresponding to $\ul\BG$ in $S$}. Note that possibly  the central leaves are only defined over a finite extension of $\BF$.
\end{definition}

\begin{lemma}\label{LemmaCLeafConstructible}
Let $\BF\supset\BF_q$ be a field and let $S$ be a reduced $\BF$-scheme. Let $\ul\BG=\bigl( K_\BF,b\s\bigr)$ and $\ul\CG=\bigl( K_S,g\s\bigr)$ be local $G$-shtukas with $K$-structure over $\BF$, respectively over $S$. Then the central leaf $\CC_{\ul\BG,S}$ is constructible in $S$. In particular it contains a subset $U\subset\CC_{\ul\BG,S}$ which is open and dense in the closure of $\CC_{\ul\BG,S}$.
\end{lemma}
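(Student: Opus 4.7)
The plan is to realize $\CC_{\ul\BG,S}$ as the preimage of a constructible subset under a scheme morphism, and then invoke Chevalley's theorem on images of morphisms of finite type. First I would reduce to the stratum of $S$ on which $\mu_{\ul\CG}(\cdot)$ equals the Hodge point $\mu_b$ of $\ul\BG$ (respectively, to the affine Weyl type locus of $\ul\BG$ when $K=I$). This stratum is locally closed, hence constructible, in $S$ by \cite[Proposition~3.4, 3.16, 5.9]{HV1}, and since an isomorphism of local $G$-shtukas preserves the Hodge point, no point outside this stratum can lie in $\CC_{\ul\BG,S}$. After this reduction, the morphism $g:S\to LG$ factors through the Schubert cell $C:=K_0\,z^{\mu_b}K_0\subset LG$, and $b\in C(\BF)$.

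Next, a point $s\in S$ lies in $\CC_{\ul\BG,S}$ if and only if $g(s)\in C(\ol{k(s)})$ lies in the $K$-twisted-$\sigma$-conjugacy orbit
\[
\CO_b\;:=\;\bigl\{\,h^{-1}\,b\,\s(h):\ h\in K(\ol k)\,\bigr\}\;\subset\; C,
\]
so that $\CC_{\ul\BG,S}=g^{-1}(\CO_b)$. It therefore suffices to prove that $\CO_b$ is constructible in $C$: then the preimage under the scheme morphism $g$ is constructible in $S$, and the ``in particular'' assertion follows from the standard topological fact that a constructible subset of a noetherian space contains an open dense subset of its own closure.

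To show $\CO_b\subset C$ is constructible, I would truncate. The orbit morphism $\psi:K\to C$, $h\mapsto h^{-1}b\s(h)$, satisfies $\psi(hk)=k^{-1}\psi(h)\s(k)$ for every $k\in K$, so modulo the twisted right action $x\cdot k:=k^{-1}x\s(k)$ of $K_N$ on $C$ the map $\psi$ descends, for $N\gg 0$, to a morphism of finite-type $\BF_q$-schemes $\ol\psi_N:K/K_N\to C/K_N$, where ``$C/K_N$'' denotes this twisted $K_N$-quotient. Applying Chevalley's theorem to $\ol\psi_N$ shows that its image is constructible in $C/K_N$; pulling back via the quotient map $C\twoheadrightarrow C/K_N$ yields $\CO_b$ as a constructible subset of $C$.

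The main obstacle is verifying that for $N\gg 0$ the twisted $K_N$-action on $C$ admits a geometric quotient of finite type and that $\ol\psi_N$ captures the orbit $\CO_b$ faithfully. This amounts to a Hensel-lifting statement: an approximate solution mod $K_N$ of $h^{-1}g(s)\s(h)=b$ must lift to an exact solution over $\ol{k(s)}$. The lifting is carried out iteratively, solving at each stage $m\ge N$ an Artin-Schreier-type equation $\s X-\Ad(b^{-1})X=\epsilon_m$ in the pro-unipotent graded piece $K_m/K_{m+1}$, which is a vector group on which the operator is surjective on $\ol k$-points by Lang-Steinberg. The delicate point is that $\Ad(b^{-1})$ does not preserve the naive filtration $\{K_m\}$ but shifts it by an amount controlled by $\mu_b$; to make the iteration well-posed one replaces $\{K_m\}$ by a filtration adapted to the Newton cocharacter of $b$, in the spirit of the slope-filtration constructions of Section~\ref{seccsd}.
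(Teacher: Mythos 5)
Your overall strategy --- truncate the group of isomorphisms, apply Chevalley's theorem, and supply a lifting lemma showing that approximate solutions of $h^{-1}g_s\sigma^\ast(h)=b$ lift to exact ones --- is exactly the strategy of the paper, and you have correctly located the key difficulty. The lifting lemma you sketch in your last paragraph is precisely \cite[Theorem 10.1]{HV1}, which the paper cites as Lemma~\ref{ThmHV9.1}, so you are essentially re-deriving an ingredient the paper already has available.

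Where your write-up has a genuine gap is in the middle step: you want to realize $\CO_b$ as the preimage of a constructible set under a projection $C\to C/K_N$, where $C/K_N$ is a quotient of the Schubert cell $C=K_0z^{\mu_b}K_0$ by the \emph{twisted} action $x\cdot k = k^{-1}x\,\sigma^\ast(k)$ of $K_N$. You assert this quotient ``admits a geometric quotient of finite type,'' but this is not obvious and you do not justify it: $C$ is an infinite-dimensional ind-scheme, the twisted action is not an ordinary group action (it depends on Frobenius), and one would have to show the action is free with a representable, finite-type quotient. This existence question is logically prior to and independent of the Hensel-lifting you discuss, and your proposal conflates the two under the heading ``the main obstacle.'' The paper avoids the issue entirely: instead of forming a quotient of $C$, it defines a functor $Y$ on $S$-schemes by
\[
Y(X)\;=\;\bigl\{\,f:X\to K/I_{c+e}:\; b_X^{-1}f^{-1}g_X\sigma^\ast(f)\in I_c(X)\,\bigr\},
\]
shows (using the boundedness estimate Lemma~\ref{LemmaConjBd}) that this condition is independent of the lift of $f$ to $K$ and hence defines a closed subscheme of the finite-type $S$-scheme $(K/I_{c+e})\times_{\BF_q}S$, and then proves $\CC_{\ul\BG,S}$ is exactly the image of $Y\to S$ using Lemma~\ref{ThmHV9.1}. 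That is, the truncation is performed on the group $K$ alone and the residual ambiguity is absorbed into a condition modulo $I_c$, so no quotient of the ambient Schubert cell is ever needed. If you want to repair your argument, replace the quotient $C/K_N$ by the paper's $Y$-construction; your reduction to the Hodge stratum and to the Schubert cell, while correct, is then also unnecessary.
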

Before proving this Lemma we provide two boundedness results.
\begin{lemma}\label{LemmaConjBd}
Let $\mu\in X_\ast(\T)$ be dominant. Then there is a positive integer $e=e(\mu)\in\BN$ such that for every $\BF_q$-scheme $S$ and for every $h\in LG(S)$ which is bounded by $\mu$
\[
hI_{n+e}(S)h^{-1}\;\subset\;I_n(S)\qquad\text{for all }n\,.
\]
\end{lemma}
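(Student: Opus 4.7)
The plan is to reduce the claim to a pole-order bound for the adjoint action of $h$ on a faithful linear representation. First I would fix a faithful representation $\rho\colon G\hookrightarrow\GL(V)$ realising $V$ as a subquotient of a direct sum of Weyl modules $\bigoplus_i V_{\lambda_i}$; such a $\rho$ exists since every $G$-module is built from Weyl modules, and a faithful one can be obtained by enlarging the family $\{\lambda_i\}$.

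Next I would consider $W:=\End(V)$ equipped with the adjoint $G$-action $g\cdot X=\rho(g)X\rho(g)^{-1}$. As a finite-dimensional $G$-module, $W$ again admits a finite filtration whose successive quotients are quotients of Weyl modules $V_{\nu_j}$ for finitely many dominant weights $\nu_j$ determined by $V$. Applying Definition~\ref{DefBounded}(a) to $h\in LG(S)$ (viewed as the left-multiplication isomorphism $LG_S\to LG_S$) and to each Weyl module $V_{\nu_j}$, boundedness propagates along the filtration to yield
\[
h\cdot\bigl(W\otimes\CO_S\dbl z\dbr\bigr)\;\subset\;z^{-c}\,W\otimes\CO_S\dbl z\dbr\qquad\text{where }c:=\max_j\langle(-\nu_j)_\dom,\mu\rangle.
\]
In matrix terms this says $\rho(h)X\rho(h)^{-1}\in z^{-c}\End(V)(\CO_S\dbl z\dbr)$ for every $X\in\End(V)(\CO_S\dbl z\dbr)$, with $c$ depending only on $\mu$ and $V$.

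I would then set $e:=c+1$. For $g\in K_{n+e}(S)$, the closed immersion $\rho$ gives $\rho(g)=I+z^{n+e}X$ for some $X\in\End(V)(\CO_S\dbl z\dbr)$. Consequently $\rho(hgh^{-1})=I+z^{n+e}\rho(h)X\rho(h)^{-1}\in I+z^{n+1}\End(V)(\CO_S\dbl z\dbr)$, so $hgh^{-1}$ reduces to the identity modulo $z^{n+1}$ in $\GL(V)$; by faithfulness of $\rho$ this forces $hgh^{-1}\in K_{n+1}(S)\subset I_n(S)$. Since $I_{n+e}(S)\subset K_{n+e}(S)$, the conclusion $hI_{n+e}(S)h^{-1}\subset I_n(S)$ follows, uniformly in $n$.

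The main obstacle is the passage from the boundedness hypothesis, which is phrased for the modules $\CG_\lambda$ attached to Weyl modules, to the adjoint representation on $\End(V)$. One must verify that the pole-order bound is preserved (or improved) along the filtration of $\End(V)$ by Weyl-module quotients, which is routine but needs an explicit induction. An alternative route avoids the adjoint picture by bounding $\rho(h)$ and $\rho(h)^{-1}$ separately using the Weyl modules $V_{\lambda_i}$ and their duals $V_{-w_0\lambda_i}$, and then combining the two pole orders additively; this requires checking that the inverse of an element bounded by $\mu$ is bounded by $-w_0\mu$, which follows from the behaviour of Hodge points under inversion.
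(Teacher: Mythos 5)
You take a genuinely different route from the paper. Hartl--Viehmann first reduce to the generic point of the Schubert cell $K_0z^\mu K_0$ (legitimate because the stronger conclusion $hK_{n+e}h^{-1}\subset K_{n+1}$, which they in fact prove, depends only on the image of $h$ in $\Gr$ and is a closed condition), then write $h=g_1z^\mu g_2$ with $g_1,g_2\in K_0$, use that each $K_i$ is normal in $K_0$ to discard $g_1,g_2$, and finally pick $e$ with $z^\mu K_{n+e}z^{-\mu}\subset K_{n+1}\subset I_n$. You instead stay on the representation-theoretic side of Definition~\ref{DefBounded} and quantify how the adjoint action of $h$ on $\End(V)$, for a faithful $\rho\colon G\hookrightarrow\GL(V)$, can lower the $z$-adic order; both proofs then conclude via $K_{n+1}\subset I_n$. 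The mechanism is really the same (both bound a conjugation), but the paper's argument is shorter and needs no representation theory beyond what is implicit in the Cartan decomposition, while yours makes the origin of the constant $e$ transparent and applies directly over an arbitrary base $S$ without the reduction to a generic point.

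There is, however, a gap in the step you label ``routine.'' Boundedness in the sense of Definition~\ref{DefBounded} passes to $G$-submodules, to $G$-module quotients, and to finite direct sums, but it does \emph{not} pass through extensions. If $0\to W'\to W\to W''\to 0$ is exact and you split it $\BF_q$-linearly, the matrix of $h$ on $W\otimes\CO_S\dpl z\dpr$ is block upper triangular
\[
\begin{pmatrix} A & B \\ 0 & C \end{pmatrix},
\]
and boundedness of $W'$ and $W''$ controls $A$ and $C$ but says nothing about the off-diagonal block $B$. So the inductive step of your filtration argument fails as stated. The fix is to drop the filtration entirely: every finite-dimensional $G$-module is a \emph{quotient} of a finite direct sum of Weyl modules $\bigoplus_j V_{\nu_j}$ (take $B$-eigenvector generators and map a $V_{\nu_j}$ onto each cyclic submodule). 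Boundedness for $\bigoplus_j V_{\nu_j}$ holds with $c=\max_j\langle(-\nu_j)_{\dom},\mu\rangle$ because the action is block diagonal, and it passes to $W=\End(V)$ by lifting an element of $W\otimes\CO_S\dbl z\dbr$ along the surjection $\bigoplus_j V_{\nu_j}\otimes\CO_S\dbl z\dbr\onto W\otimes\CO_S\dbl z\dbr$ and applying $G$-equivariance. (The alternative you sketch at the end --- bounding $\rho(h)$ and $\rho(h)^{-1}$ separately on $V$, using that the inverse of an element bounded by $\mu$ is bounded by $-w_0\mu$, and adding the pole orders --- sidesteps the $\End(V)$ discussion altogether and is cleaner still.) With this repair your argument is complete.
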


\begin{proof}
It is enough to consider the case that $h$ corresponds to the generic point of $K_0 z^{\mu}K_0$ (as every $h$ as in the assertion lies in the closure of that point). Especially $S=\Spec k'$ for a field $k'$. We decompose $h$ as $g_1z^{\mu}g_2$ for $g_1,g_2\in K_0(k')$. As $K_i$ is a normal subgroup of $K_0$ for each $i$ we have $hI_{n+e}(S)h^{-1}\subset hK_{n+e}(S)h^{-1}=g_1z^{\mu}K_{n+e}(S)z^{-\mu}g_1^{-1}$. Choosing $e$ so large that $z^{\mu}K_{n+e}z^{-\mu}\subset K_{n+1}\subset I_{n}$  the lemma follows.
\end{proof}

We call a subset of $LG(k)=G\bigl(k\dpl z\dpr\bigr)$ \emph{bounded} if it is contained in a finite union of $K_0(k)$-double cosets $K_0(k)z^\mu K_0(k)$ for dominant $\mu\in X_\ast(\T)$. 

\begin{lemma}\label{ThmHV9.1}
Let $\CB$ be a bounded subset of $LG(k)$ for an algebraically closed field $k$ and let $K$ be an open subgroup of $LG$. Then there is a constant $c=c(\CB,K)\in \BN$ such that for each $b\in\CB$ and each $h\in I_{c}(k)$ there is an $f\in K(k)$ with $bh = f^{-1}b\s(f)$.
\end{lemma}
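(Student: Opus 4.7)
My plan is successive approximation along the $I$-adic filtration, with each step reducing to a linear-semilinear Lang--Artin-Schreier equation that is solvable over the algebraically closed field $k$. Since $\CB$ is bounded it lies in a finite union $\bigcup_i K_0(k)z^{\mu_i}K_0(k)$, and Lemma \ref{LemmaConjBd} applied to these $\mu_i$ and to coweights bounding their inverses yields a single $e=e(\CB)\in\BN$ with $bI_{n+e}(k)b^{-1},\,b^{-1}I_{n+e}(k)b\subset I_n(k)$ for all $b\in\CB$ and $n\geq0$. Openness of $K$ in $LG$ gives $I_N(k)\subset K(k)$ for some $N$, so it suffices to produce $f\in I_N(k)$.

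\textbf{Iteration and linearization.} Setting $f_0=1$ and $\eta_0:=b^{-1}\!\cdot\!bh=h\in I_c(k)$, I would inductively put $f_{n+1}=f_ng_n$; writing $A_n:=f_n^{-1}b\s(f_n)=(bh)\eta_n^{-1}$ and $\eta_n:=A_n^{-1}\!\cdot\!bh$, a direct computation gives
\[
\eta_{n+1}\;=\;\s(g_n)^{-1}\cdot(A_n^{-1}g_nA_n)\cdot\eta_n.
\]
For $\eta_n\in I_{c+n}(k)$ one has $A_n\equiv b\pmod{I_c(k)}$, and the commutator estimate $[I_c(k),I_{c+n-e}(k)]\subset I_{2c+n-e}(k)\subset I_{c+n+1}(k)$ (valid once $c\geq e+1$) shows $A_n^{-1}g_nA_n\equiv b^{-1}g_nb\pmod{I_{c+n+1}(k)}$. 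Identifying the abelian quotient $I_{c+n}(k)/I_{c+n+1}(k)$ with the degree-$(c+n)$ graded piece of $\Lie G\otimes_{\BF_q}k$ and writing $X_n,Y_n$ for the images of $g_n,\eta_n$, the iteration step becomes the linear-semilinear equation
\[
\s(X_n)\,-\,b^{-1}X_nb\;\equiv\;Y_n\pmod{I_{c+n+1}(k)}.
\]
The quadratic nonlinear corrections land in $I_{2(c+n)-e}(k)\subset I_{c+n+1}(k)$ provided $c>e+1$, and hence do not interfere.

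\textbf{Lang-solvability, convergence, main obstacle.} Over the algebraically closed $k$ the displayed equation is solvable with $X_n\in I_{c+n}(k)$. I would decompose $\Lie G$ into weight spaces for the semisimple part of $b$: on weight spaces lying in the Lie algebra of the centralizer of this part, $b^{-1}(\cdot)b$ acts trivially and the equation reduces coefficient-wise to Artin--Schreier $X^q-X=Y$, solvable over $k$ without filtration loss; on the remaining weight spaces $b^{-1}(\cdot)b$ shifts $z$-valuation by a nonzero integer, and rearranging the equation exhibits $X_n$ as the fixed point of a $z$-adic contraction with an explicit power-series solution. The $\GL_2$ case $b=\operatorname{diag}(z,1)$ illustrates both mechanisms: the off-diagonal equations $\s(X_{12})-z^{-1}X_{12}=Y_{12}$ and $\s(X_{21})-zX_{21}=Y_{21}$ have the closed-form solutions $X_{12}=-\sum_{k\geq0}z^{k+1}\s^k(Y_{12})$ and $X_{21}=\sum_{k\geq0}z^k\s^{-(k+1)}(Y_{21})$, both lying in the filtration of $Y$. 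Choosing $c\geq\max(N,e+2)$ once and for all, each $g_n$ then lies in $I_{c+n}(k)\subset I_N(k)\subset K(k)$; the partial products $f_n=g_0g_1\cdots g_{n-1}$ converge $z$-adically to some $f\in I_N(k)$, and $f^{-1}b\s(f)=bh$ by continuity because $\eta_n\to 1$. The main obstacle I foresee is guaranteeing the solvability uniformly in $b\in\CB$ and without filtration loss when $b$ is non-regular, as one must then combine the Artin--Schreier direction on the Lie algebra of the centralizer of the semisimple part of $b$ with the contractive direction on the complementary root spaces inside a single graded layer, and verify that the resulting filtration shift is bounded only in terms of $e$ and the $\mu_i$, hence only in terms of $\CB$ and $K$.
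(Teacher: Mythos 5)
The paper's proof of this lemma is a one-line citation: \cite[Theorem 10.1]{HV1} established a stronger statement for $K=I_n$, and one passes to general open $K$ by noting that the $I_n$ are cofinal among open subgroups of $LG$. Your proposal correctly isolates this cofinality step (reducing to $K=I_N$), but then attempts a from-scratch proof of the cited ingredient rather than invoking it. This is a genuinely different route, so the comparison is really between your argument and the (unseen here) proof of \cite[Theorem 10.1]{HV1}.

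Your iteration scheme, linearization, and commutator estimates are correct in spirit, and the $\GL_2$ example correctly illustrates the Artin--Schreier and contraction mechanisms. However, there is a genuine gap precisely where you flag one. To solve $\s(X_n)-\mathrm{Ad}(b^{-1})(X_n)=Y_n$ you propose a decomposition of $\Lie G$ into weight spaces for ``the semisimple part of $b$.'' For a general loop-group element $b\in G\bigl(k\dpl z\dpr\bigr)$ this is not available in the form you need: the Jordan decomposition of $b$ lives over $k\dpl z\dpr$ (or an extension thereof, possibly involving fractional powers of $z$), its semisimple part need not lie in $T\bigl(k\dpl z\dpr\bigr)$, and the resulting eigenspace decomposition of $\Lie G\otimes k\dpl z\dpr$ under $\mathrm{Ad}(b^{-1})$ is generally not compatible with the $z$-adic filtration $(I_m)$ that your iteration is tracking. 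Writing $b=k_1 z^{\mu}k_2$ one does get a clean weight decomposition for $\mathrm{Ad}(z^{-\mu})$, but the conjugations by $k_1,k_2$ do not commute with $\s$ and so cannot simply be transported through the semilinear equation; some further argument is needed to combine the Artin--Schreier direction, the contracting direction, and the expanding direction (the latter two of which are only canonically defined after this reduction) into a single solvability statement with a filtration loss bounded uniformly over $\CB$. Until that is supplied, the heart of the lemma --- that the loss constant $L$ is finite and depends only on $\CB$ (through the finitely many $\mu_i$) and on $K$ (through $N$) --- is asserted rather than proved, which is exactly the content the paper outsources to \cite[Theorem 10.1]{HV1}.
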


\begin{proof}
In \cite[Theorem 10.1]{HV1} a slightly stronger statement was proved for the case $K=I_n$ for some $n$. As the subgroups $I_n$ are cofinal in the set of open subgroups of $LG$, the lemma follows.
\end{proof}

\begin{proof}[Proof of Lemma \ref{LemmaCLeafConstructible}]
Let $\mu$ be the Hodge point of $b^{-1}$ and let $e=e(\mu)\in\BN$ be the constant from Lemma~\ref{LemmaConjBd}. Also let $\CB=\{b\}$ and let $c=c(\CB,K)$ be the constant from Lemma~\ref{ThmHV9.1}. We consider the functor on the category of $S$-schemes $X$
\[
Y(X) \;=\; \bigl\{\,f:X\to K/I_{c+e}:\;b_X^{-1}f^{-1}g_X\s(f)\in I_{c}(X)\,\bigr\} 
\]
Here the condition $b_X^{-1}f^{-1}g_X\s(f)\in I_{c}(X)$ means the following. Over an \'etale covering $\wt X$ of $X$ the morphism $f$ is represented by an element $f\in K(\wt X)$. Then the condition is independent of the covering $\wt X$ and the choice of the element $f$. Indeed if $f=\tilde f h$ with $h\in I_{c+e}(\wt X)$ then 
\[
b_{\wt X}^{-1}f^{-1}g_{\wt X}\s(f)\;=\; (b_{\wt X}^{-1}h^{-1}b_{\wt X})b_{\wt X}^{-1}\tilde f^{-1}g_{\wt X}\s(\tilde f)\s(h)\,.
\]
By Lemma~\ref{LemmaConjBd} the terms $b_{\wt X}^{-1}h^{-1}b_{\wt X}$ and $\s(h)$ lie in $I_{c}(\wt X)$. This shows that $Y$ is well defined and is representable by a scheme of finite type over $S$ which is a closed subscheme of $(K/I_{c+e})\times_{\BF_q}S$. We claim that $\CC_{\ul\BG,S}$ is the image of the morphism $Y\to S$. From this the lemma follows by Chevalley's theorem~\cite[$\mathrm{IV}_1$, Corollaire 1.8.5]{EGA} and \cite[$0_3$, Proposition 9.2.2]{EGA}.

To prove the claim let $s\in \CC_{\ul\BG,S}$, let $k$ be an algebraically closed field, and let $f_s\in K(k)$ be an isomorphism $\ul\BG_{k}\isoto\ul\CG_s\otimes_{k(s)}k$, that is $f_s^{-1}g_s\s(f_s)=b_s$. Then in particular $f_s\in Y(k)$ is a point mapping to $s\in\CC_{\ul\BG,S}$. Conversely let $s\in S$ lie in the image of a point $f_s\in Y(k)$ for an algebraically closed field $k$, that is $f_s\in K(k)$ with $b_s^{-1}f_s^{-1}g_s\s(f_s)\in I_{c}(k)$. Then Lemma \ref{ThmHV9.1} yields an $\el\in K(k)$ with $f_s^{-1}g_s\s(f_s)=\el^{-1}b_s\s(\el)$. Thus $f_s\el^{-1}$ is an isomorphism $\ul\BG_s\isoto\ul\CG_s$ and so $s$ belongs to $\CC_{\ul\BG,S}$.
\end{proof}

To state the result on the product structure on Newton strata we need a generalization to more general $K$ of affine Deligne-Lusztig varieties. Let $b\in LG(k)$. Let $\mu\in X_*(\T)$ be dominant. Then the affine Deligne-Lusztig variety associated with $(b,\mu,K)$ is the reduced subscheme $X_{\mu,K}(b)$ of $LG/K$ defined by
\begin{equation}\label{defgenadlv}
X_{\mu,K}(b)(k)=\{g\in LG/K(k): g^{-1}b\s (g)\in K_0(k)z^{\mu}K_0(k)\},
\end{equation}
i.e.~it is simply the reduced inverse image of $X_{\mu}(b)=X_{\mu,K_0}(b)$ under $LG/K\rightarrow LG/K_0$. The closed affine Deligne-Lusztig variety is $X_{\preceq\mu,K}(b)=\bigcup_{\mu'\preceq\mu}X_{\mu,K}(b)$. Note that one could replace the condition in \eqref{defgenadlv} by any condition of the form $g^{-1}b\s (g)\in \CB$ for some bounded subset $\CB$ of $LG$ which a union of $K$-double cosets. In this way one obtains finer notions of affine Deligne-Lusztig varieties. Of particular interest is the case $K=I$ where the set of double cosets is parametrized by $\wt W$. For $y\in \wt W$, the affine Deligne-Lusztig variety $X_y(b)$ associated with $(y,b)$ is defined as the reduced subscheme of the affine flag variety with
\begin{equation}\label{glxx(b)}
X_{y}(b)(k)=\{g\in LG/I(k): g^{-1}b\s (g)\in I(k)yI(k)\}.
\end{equation}

\begin{theorem}\label{ThmProductStructure}
Let $\ul{\BG}=\bigl(K_k,b\s\bigr)$ be a local $G$-shtuka with $K$-structure over an algebraically closed field $k$ bounded by a dominant $\mu\in X_*(\T)$ with Newton point $\nu$. Let $\CN_\nu$ be the Newton stratum of $[b]$ in the universal deformation space of $\ul{\BG}$ bounded by $\mu$ (Proposition~\ref{PropDefo}). Let $x$ be a $P$-fundamental alcove associated with $[b]$ where the Levi subgroup $M$ of $P$ equals the centralizer of the $M$-dominant Newton point of $x$. Then there is a reduced scheme $S$ and a finite surjective morphism $S\twoheadrightarrow\CN_{\nu}$ which factors into finite surjective morphisms $S\overset{\beta}{\rightarrow} X_{\preceq\mu,K}(b)^\wedge\,\whtimes_k\, (I_{\ol N}/x^{-1}I_{\ol N}x)^{\wedge}\overset{\alpha}{\rightarrow} \mathcal{N}_{\nu}$. Here $X_{\preceq\mu,K}(b)^\wedge$ and $(I_{\ol N}/x^{-1}I_{\ol N}x)^{\wedge}$ denote the completions of $X_{\preceq\mu,K}(b)$, respectively $I_{\ol N}/x^{-1}I_{\ol N}x$ at $1$. Furthermore, $\CC_{\ul{\BG},\mathcal{N}_{\nu}}$ is geometrically irreducible and equal to the image of $\{1\}\whtimes_k(I_{\ol N}/x^{-1}I_{\ol N}x)^{\wedge}$ in $\CN_{\nu}$.
\end{theorem}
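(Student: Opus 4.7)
The plan is to construct the morphisms $\alpha$ (from the product to $\CN_\nu$) and $\beta$ (from $S$ to the product) using the slope filtration machinery of Section~\ref{seccsd}, verify finite surjectivity of each, and identify the image of $\{1\}\whtimes_k(I_{\ol N}/x^{-1}I_{\ol N}x)^\wedge$ with $\CC_{\ul\BG,\CN_\nu}$. By Remark~\ref{remfundalc} \ref{e} I may replace $\ul\BG$ by the isomorphic local $G$-shtuka $(K_k,x\s)$ without changing $\CN_\nu$ or the central leaf, and I do so throughout.

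The morphism $\alpha$ is defined on a formal neighborhood of $(1,1)$ by sending a pair $(h,\bar n)$ to the local $G$-shtuka $\ul\CG_{h,\bar n}:=\bigl(K,\,h^{-1}x\bar n\,\s(h)\,\s\bigr)$ together with the canonical identification with $\ul\BG$ at the special fiber obtained by specializing $h$ and $\bar n$ to $1$. This shtuka is bounded by $\mu$ because $h^{-1}x\s(h)$ is bounded by some $\mu'\preceq\mu$ (as $h\in X_{\preceq\mu,K}(b)$) and $\bar n$ lies in a small Iwahori neighborhood of $1$; its Newton point is constantly $\nu$, because $x\bar n$ has Newton point $\nu$ by $P$-fundamentality (Remark~\ref{remfundalc}) and $\sigma$-conjugation preserves Newton points. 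Well-definedness after passing to the quotient $I_{\ol N}/x^{-1}I_{\ol N}x$ follows by solving, for each $n_1\in x^{-1}I_{\ol N}x$, the $\sigma$-conjugation equation $f^{-1}\cdot x\bar n n_1\cdot\s(f)=x\bar n$ with $f\in I_{\ol N}$ satisfying $f|_{sp}=1$, via a Lang-type iteration exploiting the pro-unipotence of $I_{\ol N}$ in the spirit of the proof of Proposition~\ref{PropSlopeFilt}.

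For the morphism $\beta$, I apply Corollary~\ref{CorSlopeFilt} to $\ul\CG^{univ}|_{\CN_\nu}$ at each geometric point of $\CN_\nu$ and globalize via Theorem~\ref{ThmOZ2.4}: this produces an integral scheme $S$, a finite surjective morphism $S\to\CN_\nu$ factoring through an étale covering, an element $\bar n\in I_{d,\ol N}(S)$ for some large $d$, and, via Proposition~\ref{PropHV5.2}, an element $h\in LG(S)$ representing a quasi-isogeny $(K_S,x\bar n\,\s)\to\ul\CG^{univ}_S$, both reducing to $1$ at the special fiber by the normalization in Corollary~\ref{CorSlopeFilt}. The pair $(h,\bar n)$ defines $\beta:S\to X_{\preceq\mu,K}(b)^\wedge\whtimes_k(I_{\ol N}/x^{-1}I_{\ol N}x)^\wedge$, and by construction $\alpha\circ\beta$ agrees with the given finite surjection $S\twoheadrightarrow\CN_\nu$. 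Finite surjectivity of $\alpha$ then follows because $\alpha\circ\beta$ is surjective and the fibers of $\alpha$ are finite by Proposition~\ref{PropOZ2.5}, which bounds the number of pairs (completely slope divisible shtuka, bounded quasi-isogeny) with a given target. Surjectivity of $\beta$ onto the full product is more delicate; I expect it to follow from a dimension count showing both sides have the same dimension, combined with the reversibility of the $(h,\bar n)\mapsto\ul\CG_{h,\bar n}$ construction at individual points.

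For the central leaf claim, the image of $\{1\}\whtimes_k(I_{\ol N}/x^{-1}I_{\ol N}x)^\wedge$ consists of shtukas of the form $(K,x\bar n\,\s)$, which are completely slope divisible and hence at every geometric point isomorphic to $(K_k,x\s)\cong\ul\BG$ by the remark following Corollary~\ref{CorCSD}; it is therefore contained in $\CC_{\ul\BG,\CN_\nu}$. Conversely, at any point of $\CC_{\ul\BG,\CN_\nu}$ Corollary~\ref{CorSlopeFilt} applied with $\rho_0$ the given isomorphism to $\ul\BG$ forces $h\equiv 1$ up to $\Aut(\ul\BG)$, so the point lies in this image; geometric irreducibility of $\CC_{\ul\BG,\CN_\nu}$ then follows from that of the pro-unipotent formal scheme $(I_{\ol N}/x^{-1}I_{\ol N}x)^\wedge$. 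The main obstacle I expect will be a rigorous verification of well-definedness and finite surjectivity of $\alpha$: both require careful handling of the interplay between $\sigma$-conjugation equivalence of shtukas and the quotient and completion structures on $I_{\ol N}/x^{-1}I_{\ol N}x$ and on $X_{\preceq\mu,K}(b)$ at $1$.
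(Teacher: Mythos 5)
Your outline captures the general shape of the argument (construct $\alpha$ from pairs $(h,\bar n)$, use Corollary~\ref{CorSlopeFilt} and Theorem~\ref{ThmOZ2.4} to produce $S$ and $\beta$, then compare), but several of the key technical steps are either incorrect or missing an essential idea. First, you assert finiteness of $\alpha$ ``because the fibers of $\alpha$ are finite by Proposition~\ref{PropOZ2.5}''; finite fibers do not imply a finite morphism for a map between complete local schemes, and a dimension count does not imply surjectivity of $\beta$ either. The paper's actual Step~7 is a genuine commutative algebra argument: write $\CN_\nu=\Spec A$, the product $=\Spec B$, $S=\Spec C$, use that $C$ is module-finite over $A$, reduce to showing $\ker(B\to C)$ is contained in every minimal prime $\Fp$ of $B$ by exhibiting a preimage of $\Fp$ under $\beta$, and deduce simultaneously that $\alpha$ is finite and $\beta$ is surjective. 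This requires a uniqueness statement (given $h$, the second coordinate $\delta$ is determined) which your proposal never addresses. Second, your $\beta$ cannot be surjective as written: the paper must replace $S$ by a \emph{disjoint union} $\coprod_{j\in\Gamma} S_j$ indexed by $\Gamma=(J_b\cap K)/(J_b\cap K_r)$ precisely so that the $J_b$-translates of the canonical point are all hit; without this your image is a proper closed subscheme. Third, the second coordinate of $\beta$ has to land in a \emph{fixed section} $\CI_{x,n}\hookrightarrow x^{-n}I_{\ol N}x^n$ of the quotient, and making the two definitions compatible ($\alpha\circ\beta$ equal to the projection) requires the iterative $\sigma$-conjugation process of the paper's Step~6, which brings $\tilde\epsilon$ into the image of the section; taking ``an element $\bar n\in I_{d,\ol N}(S)$ for some large $d$'' is not sufficient.

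Relatedly, your claim that $\alpha$ is well-defined on the genuine quotient $(I_{\ol N}/x^{-1}I_{\ol N}x)^\wedge$ via a ``Lang-type iteration'' is doubtful over a non-perfect base: solving $f^{-1}\cdot x\bar n n_1\cdot\s(f)=x\bar n$ requires Frobenius roots (as in Lemma~\ref{LemmaTrivN}, one must pass to finite radicial covers), so the shtukas $\ul\CG_{h,\bar n}$ and $\ul\CG_{h,\bar n n_1}$ need not be isomorphic over the base itself. The paper side-steps this entirely by fixing a section and defining $\alpha$ only on its image. Your description of the central-leaf computation is also too coarse: it is not that ``$\rho_0$ forces $h\equiv 1$ up to $\Aut(\ul\BG)$'' but rather that the analysis in Step~7 shows the first coordinate of any point giving a shtuka isomorphic to $\ul\BG$ must equal some $j\in J_b$ viewed in $LG/K$, and then boundedness plus the fact that $X_{\preceq\mu,K}(b)^\wedge$ contains only the closed point $1$ among those discrete $j$'s forces $j=1$. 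In summary, the architecture is recognizable, but the proof of finite surjectivity --- the heart of the theorem --- is not there, and the $J_b$-indexed disjoint union and the section $\CI_{x,n}$ are indispensable ingredients that the proposal omits.
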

In particular, $X_{\preceq\mu,K}(b)^\wedge\,\whtimes_k\,(I_{\ol N}/x^{-1}I_{\ol N}x)^{\wedge}$ and $\mathcal{N}_{\nu}$ are isomorphic up to finite surjective morphisms and there is a unique central leaf containing the special point of $\mathcal{N}_{\nu}$. Oort~\cite{Oort04} proves a similar product decomposition of the Newton strata in the deformation space of a $p$-divisible group and in the Siegel moduli space. Theorem \ref{ThmProductStructure} is a generalization to arbitrary split reductive groups and level subgroups $K$ of the translation of his result to the function field case.

Note that we do not claim that $S$ or the morphisms occurring in the theorem are canonical. They will depend on various choices made in the course of the proof.
\begin{proof}
{\it Step 1.} Let $g\in LG(k)$ with $g^{-1}x\s(g)=b$. In order to make the proof easier to digest let us explain its structure. The key observation is the following. Let $T$ be a $k$-scheme, let $n$ be a positive integer and consider $T$-valued points $h\in LG(T)$ and $\delta\in x^{-n}I_{\ol N}(T)x^n\subset I(T)$ such that the induced morphism $h:T\to LG/K$ factors through $X_{\preceq\mu,K}(b)$. Then we can construct out of $(h,\delta)$ the local $G$-shtuka $\bigl(K_T,(gh)^{-1}x\delta\s(gh)\bigr)$ which has two desired properties. Firstly, it has constant Newton point $\nu$ since all elements of $xI(T)$ have constant Newton point $\nu$ by Remark \ref{remfundalc} \ref{e}. Secondly, from $h\in X_{\preceq\mu,K}(b)(T)$ we obtain that $(gh)^{-1}x\s(gh)=h^{-1}b\s(h)$ is bounded by $\mu$. If we choose $n$ big enough, we may hope that $\s(gh)^{-1}\delta \s(gh)\in K_0(T)$. Then $\bigl(K_T,(gh)^{-1}x\delta\s(gh)\bigr)$ will be bounded by $\mu$. This already indicates how we will construct the morphism $\alpha: X_{\preceq\mu,K}(b)^\wedge\,\whtimes_k\, (I_{\ol N}/x^{-1}I_{\ol N}x)^{\wedge}\rightarrow \mathcal{N}_{\nu}$. The proof now proceeds in several steps. We construct $S$ in step 2, estimate the $n$ needed for the above in step 3, define the morphism $\alpha$ in step 4, the morphism $\beta:S\to X_{\preceq\mu,K}(b)^\wedge\,\whtimes_k\, (I_{\ol N}/x^{-1}I_{\ol N}x)^{\wedge}$ in steps 5 and 6, prove the finiteness and surjectivity of $\alpha$ and $\beta$ in step 7, and compute the central leaf $\CC_{\ul{\BG},\mathcal{N}_{\nu}}$ in step 8. See the beginning of each step for more details.

\smallskip
{\it Step 2. Construction of a first finite surjective morphism $S\to\CN_\nu$ over which the universal local $G$-shtuka is isogenous to a completely slope divisible one, and computation of two bounds $\mu'$ and $\mu''$.} 
Consider the quasi-isogeny groups $J_b$ of $\ul\BG$ and $J_x$ of $(K_k,x\s)$. Then $gJ_bg^{-1}=J_x\subset M\bigl(k\dpl z\dpr\bigr)$ for the Levi subgroup $M$ by Corollary~\ref{PropQISGpDecent}. Let $\mu'$ be such that all elements of $gj X_{\preceq\mu,K}(b)^{\wedge}$ are bounded by $\mu'$ for all $j\in J_b\cap K$. Such a $\mu'$ exists by \cite[Lemma 3.13]{HV1} on a quasi-compact connected neighborhood of $1\in X_{\preceq\mu,K}(b)$ since $X_{\preceq\mu,K}(b)$ is reduced, all $gj$ have the same image in $\pi_1(G)$, and one only has to consider a finite set of representatives $j$ for $(J_b\cap K)/(J_b\cap K\cap g^{-1}Kg)$. Now let $\ul\CG$ be the universal local $G$-shtuka over $\CN_{\nu}$. We consider the normalization $\wt\CN_{\nu}\rightarrow \CN_{\nu}$ of $\CN_{\nu}$. Since $\CN_\nu$ is excellent this is a finite morphism by \cite[IV, Scholie 7.8.3(iii) and (vi)]{EGA}. Hence $\wt\CN_\nu$ is normal and its connected components are integral by \cite[II, Corollary 6.3.8]{EGA}.
Applying Corollary \ref{CorSlopeFilt} to each such component we see that there is a finite surjective morphism $S=\Spec R_S\rightarrow\CN_{\nu}$ with $R_S$ a direct sum of integral domains, such that over $S$ the local $G$-shtuka $\ul\CG$ is isomorphic to $(K_S,l^{-1}x\epsilon\s(l)\s)$ for some $l\in LG(S)$ and some $\epsilon\in I_{\ol N}(S)$ whose reductions in the finitely many maximal ideals of $R_S$ are $g$ and $1$, respectively. Since $S$ is quasi-compact and reduced, and the image of $l$ in $\pi_1(G)$ is trivial, $l$ is bounded by some dominant $\mu''$ according to \cite[Lemma 3.13]{HV1}.

\smallskip
{\it Step 3. Computation of the bound on $n$ needed in step 1, first replacement of $S$, and choice of a section $(I_{\ol N}/x^{-1}I_{\ol N}x)^{\wedge}\cong\CI_{x,n}\hookrightarrow x^{-n}I_{\ol N}x^n$.}
Let $n_K\in \BN$ with $I_{n_K}\subset K$. Let $n\in \BN$ with $h^{-1}x^{-n}I_{\ol N}x^nh\subset I_{n_K}$ for all $h\in LG$ bounded by $\mu'$ or by $\mu''$ (which exists by Remark \ref{remfundalc} \ref{d} and Lemma \ref{LemmaConjBd}). By Lemma~\ref{LemmaBoundedN} there exists an $n_1\ge n$ with $\s(\tilde j)^{-1}x^{-n_1}I_{\ol N}x^{n_1}\s(\tilde j)\subset x^{-n}I_{\ol N}x^n$ for all $\tilde j$ contained in the bounded set $g(J_b\cap K)g^{-1}\subset M\bigl(k\dpl z\dpr\bigr)$. By Lemma \ref{LemmaTrivN} we can replace $S$ by a finite extension and multiply $l$ by an element of $I_{\ol N}(S)$ to achieve that $\epsilon$ as above is even in $x^{-n_1}I_{\ol N}(S)x^{n_1}\subset x^{-n}I_{\ol N}(S)x^n$. Let $\CI_{x,n}$ be the completion of $(x^{-n}I_{\ol N}x^n)/(x^{-(n+1)}I_{\ol N}x^{n+1})$ at $1$. Then $\CI_{x,n}\cong (I_{\ol N}/x^{-1}I_{\ol N}x)^{\wedge}$. We choose a section $\CI_{x,n}\rightarrow x^{-n}I_{\ol N}x^{n}$ using Lemma \ref{lemsecunip} and from now on identify $\CI_{x,n}$ with its image. Note that as $I_{\ol N}$ is pro-unipotent we may lift $\{1\}\rightarrow x^{-(n+1)}I_{\ol N}x^{n+1}$ (where $1$ is the special point of $\CI_{x,n}$ and the map is the restriction of the section) to a morphism $\CI_{x,n}\rightarrow x^{-(n+1)}I_{\ol N}x^{n+1}$. Multiplying the original section by the inverse of this map, we see that we may assume that the section maps the special point of $\CI_{x,n}$ to $1\in x^{-n}I_{\ol N}x^{n}$. 

\smallskip
{\it Step 4. Definition of $\alpha:X_{\preceq\mu,K}(b)^\wedge\,\whtimes_k\,\CI_{x,n}=:\wh T\rightarrow \CN_{\nu}$.} We define $\alpha(h,\delta):= (gh)^{-1}x\delta\s(gh)$ where $h\in LG(\wh T)$ is a chosen, fixed representative (existing by Lemma~\ref{LemmaTildeKEtale}) of the universal element of $X_{\preceq\mu,K}(b)(\wh T)$ and where $\delta\in x^{-n}I_{\ol N}(\wh T)x^{n}$ is the image of the universal element. We consider the local $G$-shtuka $(K_{\wh T},(gh)^{-1}x\delta\s(gh)\s)$ over $\wh T$ together with the isomorphism at its closed point $h_k:(K_k,(gh)^{-1}x\s(gh)\s)\isoto\ul\BG$. It yields a deformation of $\ul\BG$ over $\wh T$ which is independent of the representative $h$ up to canonical isomorphism. By the observation in step 1, this deformation has constant Newton point $\nu$ and is bounded by $\mu$ because the choice of $n$ implies that $\s(gh)^{-1}\delta \s(gh)\in K(\wh T)\subset K_0(\wh T)$. This indeed defines $\alpha:\wh T\to\CN_\nu$.

\smallskip
{\it Step 5. Final definition of $S$ and construction of a morphism $S\to X_{\preceq\mu,K}(b)^\wedge$, i.e. of the first component of $\beta$.} We want to use the description of $\ul\CG_S$ by $l$ and $\epsilon$ to construct a finite surjective morphism $\beta:S\rightarrow X_{\preceq\mu,K}(b)^\wedge\,\whtimes_k\,\CI_{x,n}$. We first define the map $S\to X_{\preceq\mu,K}(b)^\wedge$. For this purpose we choose an $r\in\BN$ with $K_r\subset K\cap hKh^{-1}$ for all $h\in X_{\preceq\mu,K}(b)^\wedge$ (use Lemma \ref{LemmaConjBd}). In order to obtain the surjectivity of $\beta$ in step 8 below we rename our $S$ to $S_1$ and replace it by the finite disjoint union $\coprod_{j\in\Gamma}S_j$ with  $S_j=S_1$ for all $j\in\Gamma=(J_b\cap K)/(J_b\cap K_r)$. The first component of the morphism $\beta$ is given on $S_j$ by $h=j^{-1}g^{-1}l\in(LG/K)(S_j)$. To show that $h\in X_{\preceq\mu,K}(b)^{\wedge}(S_j)$ we consider it in every geometric generic point $\ol\eta$ of $S_j$. By Corollary~\ref{Cor4.10'} there is a $y\in x^{-n}I_{\ol N}(\ol\eta)x^n$ with $x\epsilon=y^{-1}x\s(y)$. Together with the choice of $\mu''$ and $n$ this implies that $h^{-1}b\s(h)=\kkk(l^{-1}x\epsilon\s(l))\s(\kkk)^{-1}$ with $\kkk=l^{-1}yl=(gjh)^{-1}ygjh\in K(\bar\eta)$. Hence $h_{\ol\eta}\in X_{\preceq\mu,K}(b)(\bar\eta)$. Since $X_{\preceq\mu,K}(b)\subset LG/K$ is closed, $S_j$ is reduced and complete, and the reduction of $h$ in all closed points of $S_j$ is $1\in LG/K$ we see that $h$ gives a morphism $S_j\to X_{\preceq\mu,K}(b)^{\wedge}$.

\smallskip
{\it Step 6. Definition of the second component $S\to\CI_{x,n}$ of $\beta$.} On $S_j$ we would like to choose $\delta\in\CI_{x,n}(S_j)$ equal to $\tilde\epsilon:=\s(gj^{-1}g^{-1})\epsilon\s(gjg^{-1})\in x^{-n}I_{\ol N}(S_j)x^n$ because then $(gh)^{-1}x\tilde\epsilon\s(gh)=l^{-1}x\epsilon\s(l)$ and $\alpha\beta$ equals the projection $S_j\to\CN_\nu$. However, $\tilde\epsilon$ is in general not in the image $\CI_{x,n}$ of the chosen section, and needs first to be modified by $\sigma$-conjugating $(gh)^{-1}x\tilde\epsilon\s(gh)$ by further elements of $K$. To do that and to define the second component $\delta$ we use induction on $m$ to define it modulo $(\s)^m(\mathfrak{m})$ where $\mathfrak{m}$ denotes the maximal ideal of any component of $R_{S_j}$. More precisely, we want to show that for every $m\in\BN$ there is a $g_m\in (gh)^{-1}x^{-n}I_{\ol N}(S_j)x^ngh$ such that $g_m^{-1}(gh)^{-1}x\tilde\epsilon\s(gh g_m)=(gh)^{-1}x\epsilon_m\delta_m\s(gh)$ for some $\epsilon_m\in x^{-(n+1)}I_{\ol N}(S_j)x^{n+1}$ which is congruent to $1$ modulo $(\s)^m(\mathfrak{m})$ and some $\delta_m\in\CI_{x,n}(S_j)$. In addition we choose $g_m$ and $\delta_m$ in such a way that $g_{m+1}\equiv g_m$ and $\delta_m\equiv \delta_{m-1}$ modulo $(\s)^m(\mathfrak{m})$.  Note that our choice of $\mu'$ and $n$ implies that $g_m\in (gh)^{-1}x^{-n}I_{\ol N}x^ngh\subset K$. Hence this iterative process of $\sigma$-conjugating $(gh)^{-1}x\tilde\epsilon\s(gh)$ only changes the trivialization of the local $G$-shtuka with $K$-structure. For $m=0$ let $g_0=1$, and let $\delta_0\in \CI_{x,n}(S_j)$ be the unique element with $\epsilon_0=\tilde\epsilon\delta_0^{-1}\in x^{-(n+1)}I_{\ol N}(S_j)x^{n+1}$. That is, $\delta_0^{-1}$ is the image of $\tilde\epsilon^{-1}$ under the map
\begin{equation}\label{Eq7.1}
x^{-n}I_{\ol N}x^n\;\to\;x^{-n}I_{\ol N}x^n/x^{-(n+1)}I_{\ol N}x^{n+1}\;\to\;\CI_{x,n}.
\end{equation} 
For the induction step we set $g_{m+1}=g_m(gh)^{-1}x\epsilon_mx^{-1}gh$. Then
\begin{eqnarray*}
g_{m+1}^{-1}(gh)^{-1}x\tilde\epsilon\s(gh g_{m+1})&=&((gh)^{-1}x\epsilon_mx^{-1}gh)^{-1}(gh)^{-1}x\epsilon_m\delta_m\s(gh)\s((gh)^{-1}x\epsilon_mx^{-1}gh)\\
&=&(gh)^{-1}x\delta_m\s(x\epsilon_mx^{-1})\s(gh).
\end{eqnarray*}
Note that $\s(x\epsilon_mx^{-1})\in x^{-n}I_{\ol N}(S_j)x^n$ is congruent to 1 modulo $(\s)^{m+1}(\mathfrak{m})$. We now decompose $\delta_m\s(x\epsilon_mx^{-1})$ as $\epsilon_{m+1}\delta_{m+1}$ with $\delta_{m+1}\in \CI_{x,n}(S_j)$ and $\epsilon_{m+1}\in x^{-(n+1)}I_{\ol N}(S_j)x^{n+1}$ by taking $\delta_{m+1}^{-1}$ as the image of $\s(x\epsilon_m^{-1}x^{-1})\delta_m^{-1}$ under the map (\ref{Eq7.1}). Then $\delta_{m+1}\equiv\delta_m\pmod{(\s)^{m+1}(\mathfrak{m})}$ and $\epsilon_{m+1}\equiv 1\pmod{(\s)^{m+1}(\mathfrak{m})}$. We define $\delta$ and $\hat g$ to be the limits of the $\delta_m$, respectively $g_m$ and replace $l$ by $l\hat g$ and $h$ by $h\hat g$. Note that this does not change the point $h\in X_{\preceq\mu}(b)(S_j)^{\wedge}$. Then the Frobenius of the universal local $G$-shtuka over $S_j$ is $\hat g^{-1}l^{-1}x\epsilon\s(l\hat g)=(gh\hat g)^{-1}x\tilde\epsilon\s(gh\hat g)=(gh)^{-1}x\delta\s(gh)$. This defines the second component $\delta$ of $\beta$ on $S_j$ and we put everything together to obtain the morphism $\beta:S=\coprod_{j\in\Gamma}S_j\to X_{\preceq\mu,K}(b)^\wedge\,\whtimes_k\,\CI_{x,n}$, such that the projection $S\rightarrow \CN_{\nu}$ factors through the morphisms $\beta$ and $\alpha:X_{\preceq\mu,K}(b)^\wedge\,\whtimes_k\,\CI_{x,n}\rightarrow \CN_{\nu}$.

\smallskip
{\it Step 7. Proof that $\alpha$ and $\beta$ are finite surjective.} Clearly, the morphism $\beta$ is finite and $\alpha$ is surjective because $S\to\CN_\nu$ is. To show that also $\alpha$ is finite and $\beta$ is surjective we write $\CN_\nu=\Spec A$, and $X_{\preceq\mu,K}(b)^\wedge\,\whtimes_k\,\CI_{x,n}=\Spec B$, and $S=\Spec C$. Since $A$ is noetherian by Proposition~\ref{PropDefo}, it suffices for the finiteness of $\alpha$ to show that the $A$-module $B$ is via $\beta^\ast$ contained in the finite $A$-module $C$. Let $\Fb=\ker(B\to C)$. Since $B$ is reduced we only need to show that $\Fb$ is contained in every minimal prime ideal $\Fp$ of $B$. We will show that $\Fp$ lies in the image of $\beta$. Then $\Fb\subset\Fp$. Also since $\beta$ is finite, hence closed, this will prove that $\beta$ is surjective. Consider the morphism $\Spec C\otimes_AB/\Fp\to\Spec B/\Fp$ which is finite surjective by base change from $\Spec C\to\Spec A$. Let $\FP\subset C\otimes_AB/\Fp$ be a prime ideal mapping to $\Fp$ and set $R=(C\otimes_AB/\Fp)/\FP$. Then $R$ is a complete local integral domain. It comes with two projections $\Spec R\to\Spec B$, the first of which factors through $\Spec C$, both mapping the maximal ideal of $R$ to the closed point of $\Spec B$. The projections yield two $R$-valued points $(h,\delta)$ and $(h',\delta')$ of $X_{\preceq\mu,K}(b)^\wedge\,\whtimes_k\,\CI_{x,n}$ with reduction $(1,1)$, which have the same image in $\CN_\nu(R)$, i.e.~$(gh)^{-1}x\delta\s(gh)=\tilde l^{-1}(gh')^{-1}x\delta'\s(gh')\s(\tilde l)$ for the representatives $h,h'\in LG(R)$ induced by the choice of representative in the definition of $\alpha$, and for some $\tilde l\in K(R)$ such that $h'\tilde lh^{-1}$ has reduction $1$. By construction the point $(h,\delta)$ lies in the image of $\beta$, and we have to show that $(h',\delta')$ likewise does. 

To compare $h,h'$ we consider them in each geometric point of $\Spec R$. We use again that over a perfect field $x\delta=y^{-1}x\s(y)$ and $x\delta'=(y')^{-1}x\s(y')$ for some $y,y'\in x^{-n}I_{\ol N}x^n$. Together with the choice of $n$ this implies that $h^{-1}b\s(h)= \kkk^{-1}(h')^{-1}b\s(h')\s(\kkk)$ for $\kkk=(gh')^{-1}y'(gh')\tilde l(gh)^{-1}y^{-1}(gh)\in K$, that is $h'\tilde fh^{-1}\in J_b$. Let $\xi=h'h^{-1}$. The computation just made shows that in every geometric point, $\xi\in J_b\cdot hKh^{-1}$. We consider the image of $\xi$ in $LG/(hKh^{-1})(R)$, which lies in the image of $J_b$ in this quotient (as this is the case in each geometric point and $R$ is reduced). The latter is a discrete subscheme, its intersection with each bounded subscheme of $LG/(hKh^{-1})$ is finite. Hence the image of $\xi$ is constant and equal to the image of some $j\in J_b$. Thus $j^{-1}\xi\in hK(R)h^{-1}$ and $h'=j h$ as elements of $LG/K(R)$. Considering the reduction in the special point of $R$ we see that $j\in K$. Furthermore $j$ is unique up to right multiplication by elements of $J_b\cap K\cap hKh^{-1}$. So by our choice of $S=\coprod_{j\in\Gamma}S_j$ with $S_j=S_1$ we can consider the $R$-valued point $\Spec R\to S$ which lies in some $S_{j_0}$ also as a point in $S_{jj_0}$. As such it is mapped under $\beta$ to a point $(jh,\tilde\delta)=(h',\tilde\delta)$ in $X_{\preceq\mu,K}(b)^\wedge\,\whtimes_k\,\CI_{x,n}$.

It thus remains to show that for $h=h'$ in $LG/K(R)$, also $\delta=\delta'$ in $\CI_{x,n}$. Then $(h',\delta')$ lies in the image of $\beta$ as desired. Let $\mathfrak{m}_ R$ be the maximal ideal of $R$. By definition $\delta,\delta'\in x^{-n}I_{\ol N}x^n(R)$. Let $m$ be minimal with $\delta'\delta^{-1}\notin x^{-(n+1)}I_{\ol N}(R/(\s)^m(\mathfrak{m}_R))x^{n+1}$. As $\delta,\delta'$ are both in the image of the section of $\CI_{x,n}$ this implies $\delta'\delta^{-1}=1\in LG(R/(\s)^{m-1}(\mathfrak{m}_R))$. Our assumption that the deformations associated with $(h,\delta)$ and $(h,\delta')$ are isomorphic as deformations (i.e.~via an isomorphism which is the identity on the special fiber) implies that there is an $f\in LG(R)$ with $f\equiv 1\pmod{\mathfrak{m}_R}$ and $f^{-1}x\delta\s(f)=x\delta'$. This is equivalent to $\delta^{-1}x^{-1}fx\delta'=\s(f)$. Considering the equation for $f$ modulo $(\s)^l(\mathfrak{m}_R)$ for all $l\in\BN$ we see that $f\in \ol N\bigl(R\dpl z\dpr\bigr)$ since $x$ normalizes $\ol N$. Considering the equation modulo $x^{-i}I_{\ol N}x^i$ for the maximal $i$ with $f\in x^{-i}I_{\ol N}x^i(R)$ we see that this $i$ is at least $n$, especially $x^{-1}f^{-1}x\in x^{-(n+1)}I_{\ol N}x^{n+1}$. Furthermore $f=x\delta\s(f)(\delta')^{-1}x^{-1}\equiv 1 \pmod{(\s)^{m-1}(\mathfrak{m}_R)}$ by induction, thus $\s(f)\equiv 1 \pmod{(\s)^{m}(\mathfrak{m}_R)}$. Hence over $R/(\s)^m(\Fm_R)$ we have $\delta'\delta^{-1}=\delta'\s(f)^{-1}\delta^{-1}=x^{-1}f^{-1}x\in x^{-(n+1)}I_{\ol N}(R/(\s)^m(\mathfrak{m}_R))x^{n+1}$. But this is a contradiction to our assumption $\delta'\delta^{-1}\notin x^{-(n+1)}I_{\ol N}(R/(\s)^m(\mathfrak{m}_R))x^{n+1}$. This proves $\delta=\delta'$ from which the finiteness of $\alpha$, the surjectivity of $\beta$, and hence the first assertion of the theorem follow. 

\smallskip
{\it Step 8. Computation of the central leaves.} To compute $\CC_{\ul\BG,\CN_\nu}$ consider two geometric points of $\CN_\nu$ and some preimages $(h,\delta)$ and $(h',\delta')$ in $X_{\preceq\mu,K}(b)^\wedge\,\whtimes_k\,\CI_{x,n}$. The preceding calculation shows that the local $G$-shtukas with $K$-structure above these points can only belong to the same isomorphism class if $h'=jh$ for some $j\in J_b$. We are interested in the special case $h'=1$. We use that over a perfect field each element $x\delta$ or $x\delta'$ with $\delta,\delta'\in x^{-n}I_{\ol N}x^n$ is $\sigma$-conjugate by an element of $x^{-n}I_{\ol N}x^n$ to $x$. Thus the set of points $(h,\delta)\in X_{\preceq\mu,K}(b)^\wedge\,\whtimes_k\,\CI_{x,n}$ such that the local $G$-shtuka with $K$-structure corresponding to the image in $\CN_{\nu}$ is isomorphic to $\ul\BG$ consists of all pairs $(j,\delta)$ for some $j\in (J_b/J_b\cap K)\cap X_{\preceq\mu,K}(b)^\wedge$ and arbitrary $\delta$. As $j\in X_{\preceq\mu,K}(b)^\wedge$, it is bounded, and hence only a finite set of elements of $J_b/J_b\cap K$ can occur. Furthermore, an element of this finite set of closed points of $LG/K$ only lies in $X_{\preceq\mu,K}(b)^\wedge$ if it is equal to 1. Hence $j=1$ and we see that $\CC_{\ul{\BG},\mathcal{N}_{\nu}}$ is of the form claimed in the theorem.
\end{proof}

An analogous proof shows
\begin{theorem}\label{ThmProductI}
Let $\ul{\BG}=\bigl(I_k,b\s\bigr)$ be a local $G$-shtuka with $I$-structure over $k$ which is of affine Weyl type $y$ and has Newton point $\nu$. Let $\CN_\nu$ be the Newton stratum of $b$ in the universal deformation space of $\ul{\BG}$ of Weyl type $y$. Let $x$ be a $P$-fundamental alcove associated with the isogeny class of $\ul{\BG}$ where $P$ is such that the Levi subgroup $M$ of $P$ equals the centralizer of the $M$-dominant Newton point of $x$. Then up to finite surjective morphisms, $\CN_{\nu}$ is isomorphic to the completion $X_{y}(b)^\wedge\whtimes_k (I_{\ol N}/x^{-1}I_{\ol N}x)^{\wedge}$ of $X_{y}(b)\times_k I_{\ol N}/x^{-1}I_{\ol N}x$ at $(1,1)$. Furthermore, $\CC_{\ul{\BG},\mathcal{N}_{\nu}}$ is geometrically irreducible and equal to the image of $\{1\}\whtimes_k(I_{\ol N}/x^{-1}I_{\ol N}x)^{\wedge}$ in $\CN_{\nu}$.\qed
\end{theorem}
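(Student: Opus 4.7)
The plan is to transcribe the eight-step proof of Theorem~\ref{ThmProductStructure} to the Iwahori setting, taking $K=I$ throughout and replacing ``bounded by $\mu$'' by ``of affine Weyl type $y$'' and $X_{\preceq\mu,K}(b)^\wedge$ by $X_y(b)^\wedge$ (as in \eqref{glxx(b)}). Since the architecture of that proof has been spelled out in full, the real work is to verify that each ingredient has an Iwahori analog and to isolate the one place where a substantive change is needed.

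First I would choose $g\in LG(k)$ with $g^{-1}x\s(g)=b$, so that $gJ_bg^{-1}=J_x\subset M\bigl(k\dpl z\dpr\bigr)$ by Corollary~\ref{PropQISGpDecent}. Passing to the normalization of $\CN_\nu$ (finite since Proposition~\ref{PropDefo} gives a complete noetherian local ring of dimension $\ell(y)$ in this case) and applying Corollary~\ref{CorSlopeFilt} with $K=I$, I obtain a finite surjective morphism $S\to\CN_\nu$ together with a trivialization $\ul\CG|_S\cong(I_S,l^{-1}x\epsilon\s(l)\s)$ in which $l\in LG(S)$ is bounded by some $\mu''$ and $\epsilon\in I_{\ol N}(S)$ reduces to $1$ at the closed points. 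I then fix $\mu'$ bounding $gj\cdot X_y(b)^\wedge$ for the finite set of representatives $j\in(J_b\cap I)/(J_b\cap I\cap g^{-1}Ig)$, and use Remark~\ref{remfundalc}\ref{d} together with Lemma~\ref{LemmaConjBd} to pick $n$ so large that conjugation by any $h$ bounded by $\mu'$ or $\mu''$ carries $x^{-n}I_{\ol N}x^n$ into $I$; by Lemma~\ref{LemmaBoundedN} and Lemma~\ref{LemmaTrivN} I may further arrange $\epsilon\in x^{-n_1}I_{\ol N}(S)x^{n_1}$ for a suitable $n_1\ge n$, and I fix a section $\CI_{x,n}:=(I_{\ol N}/x^{-1}I_{\ol N}x)^\wedge\hookrightarrow x^{-n}I_{\ol N}x^n$ sending $1$ to $1$ via Lemma~\ref{lemsecunip}.

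The morphism $\alpha\colon X_y(b)^\wedge\whtimes_k\CI_{x,n}\to\CN_\nu$ is then defined by $(h,\delta)\mapsto\bigl(I_{\wh T},(gh)^{-1}x\delta\s(gh)\s\bigr)$, after lifting $h$ via Lemma~\ref{LemmaTildeKEtale}. The one substantive difference with the proof of Theorem~\ref{ThmProductStructure} appears here: I have to check that the resulting local $G$-shtuka is of affine Weyl type $y$, not merely bounded. From the factorization
\[
(gh)^{-1}x\delta\s(gh)\;=\;\bigl(h^{-1}b\s(h)\bigr)\cdot\bigl(\s(gh)^{-1}\delta\s(gh)\bigr),
\]
the first factor lies in $I(\wh T)yI(\wh T)$ by the definition of $X_y(b)$ together with Lemma~\ref{lemprep2}, while the choice of $n$ puts the second factor in $I(\wh T)$; since $IyI\cdot I=IyI$ the product lies in $I(\wh T)yI(\wh T)$ as required. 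Constancy of the Newton point at $\nu$ follows from Remark~\ref{remfundalc}\ref{e}, which forces all of $xI$ to have Newton point $\nu$.

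The construction of $\beta\colon S\to X_y(b)^\wedge\whtimes_k\CI_{x,n}$ via $h:=j^{-1}g^{-1}l$ on the component $S_j$ of a disjoint-union replacement $S=\coprod_{j\in\Gamma}S_j$, the inductive adjustment of the $\CI_{x,n}$-coordinate $\delta$ by the iterative procedure of step~6, the proof that $\alpha$ is finite and $\beta$ is surjective via the generic-fiber comparison using discreteness of $J_b$ in $LG/(hIh^{-1})$, and the identification of $\CC_{\ul\BG,\CN_\nu}$ with the image of $\{1\}\whtimes_k\CI_{x,n}$, all transcribe verbatim from steps~5--8 of the proof of Theorem~\ref{ThmProductStructure}: they rely only on $\sigma$-conjugation bookkeeping in $LG$, on Corollary~\ref{Cor4.10'}, and on the decency properties of the $P$-fundamental $x$, none of which is sensitive to whether the boundedness comes from $\mu$ or from the Weyl type $y$. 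The main (and essentially the only) obstacle, once the proof of Theorem~\ref{ThmProductStructure} is in hand, is the verification in the previous paragraph that $\alpha$ indeed lands in the deformation space of Weyl type $y$.
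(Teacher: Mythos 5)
Your proposal takes exactly the approach the paper intends (the paper simply writes ``An analogous proof shows'' before the statement), and you correctly isolate and verify the one substantive point that must change: that $\alpha(h,\delta)$ lands in the deformation space of affine Weyl type $y$, via the factorization $(gh)^{-1}x\delta\s(gh)=(h^{-1}b\s(h))\cdot(\s(gh)^{-1}\delta\s(gh))$ together with $IyI\cdot I=IyI$.

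One small point where ``transcribe verbatim'' slightly overstates the case: step~5 of the proof of Theorem~\ref{ThmProductStructure} invokes that $X_{\preceq\mu,K}(b)$ is \emph{closed} in $LG/K$ in order to conclude from ``generic points of $S_j$ map into the ADLV and closed points map to $1$'' that $h$ factors through the completion of the ADLV at $1$; in the Iwahori/Weyl-type setting $X_y(b)$ is only locally closed in $\Flag$. The fix is standard but not identical: since each component of $S_j$ is $\Spec$ of a complete local ring, $h(S_j)$ lands in every open neighborhood of $1$ in $\overline{X_y(b)}$, and as $X_y(b)$ is open in its closure (it is cut out inside the closed $\{g:g^{-1}b\s(g)\in\overline{IyI}\}$ by removing the lower-dimensional strata $Iy'I$, $y'<y$), the preimage of the closed complement $\overline{X_y(b)}\setminus X_y(b)$ is a closed subset of $S_j$ missing the closed point, hence empty. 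With this substitution the rest of steps~5--8 goes through as you say.
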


\begin{corollary}\label{ThmCentralLeaf}
Using the notation of Definition~\ref{DefCentralLeaf} let $\nu$ be the Newton point of $\ul\BG$ and let $\CN_{\nu,S}$ be the Newton stratum of $\nu$ in $S$. Then $\CC_{\ul\BG,S}$ is closed in $\CN_{\nu,S}$. In particular $\CC_{\ul\BG,S}$ with the reduced structure is a locally closed subscheme of $S$.
\end{corollary}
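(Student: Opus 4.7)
Since $\CN_{\nu,S}$ is locally closed in $S$ by \cite[Theorem~7.3]{HV1}, the ``in particular'' clause reduces to closedness of $\CC_{\ul\BG,S}$ in $\CN_{\nu,S}$. By Lemma~\ref{LemmaCLeafConstructible} this set is constructible, so it suffices to prove stability under specialization inside $\CN_{\nu,S}$: given $s\leadsto s'$ with $s\in\CC_{\ul\BG,S}$ and $s'\in\CN_{\nu,S}$, the goal is $s'\in\CC_{\ul\BG,S}$. Standard valuative lifting of specializations in the noetherian scheme $S$ produces a discrete valuation ring $R$ and a morphism $\Spec R\to S$ sending the generic point $\eta$ to $s$ and the closed point $\tau$ to $s'$, and after enlarging $R$ I may assume its residue field $k$ is an algebraic closure of $k(s')$.

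I transfer the question to the universal deformation of $\ul\BG':=\ul\CG_\tau\otimes_{k(\tau)}k$ bounded by any dominant $\mu$ that bounds $\ul\CG$; let $\ol\CalD$ be the complete noetherian local ring pro-representing it (Proposition~\ref{PropDefo}). The pullback of $\ul\CG$ to $\Spec\hat R$ is a deformation of $\ul\BG'$ and thus induces by universality a morphism $\Spec\hat R\to\Spec\ol\CalD$, under which $\tau$ maps to the special point while $\eta$ maps to a point $t_\eta$ whose universal fiber is isomorphic to $\ul\CG_s\cong\ul\BG$. Hence $t_\eta$ lies in the central leaf $\CC_{\ul\BG,\CN_\nu}\subset\CN_\nu\subset\Spec\ol\CalD$ of $\ul\BG$ inside the universal deformation of the \emph{a priori} different local $G$-shtuka $\ul\BG'$. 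If I can show that this leaf is closed in $\CN_\nu$, then the image of $\tau$—contained in the closure of $\{t_\eta\}$ inside $\CN_\nu$—must itself lie in $\CC_{\ul\BG,\CN_\nu}$, forcing $\ul\BG'\cong\ul\BG$ and hence $s'\in\CC_{\ul\BG,S}$.

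The main step, and the chief obstacle, is thus showing that $\CC_{\ul\BG,\CN_\nu}$ is closed in $\CN_\nu$. Writing $\ul\BG'=(K_k,b'\s)$, Theorem~\ref{ThmProductStructure} provides a $P$-fundamental alcove $x$ and a finite surjective morphism $\pi\colon X_{\preceq\mu,K}(b')^{\wedge}\whtimes_k(I_{\ol N}/x^{-1}I_{\ol N}x)^{\wedge}\onto\CN_\nu$; since $\pi$ is finite hence closed, closedness of $\CC_{\ul\BG,\CN_\nu}$ will follow from closedness of its preimage $\pi^{-1}(\CC_{\ul\BG,\CN_\nu})$. Step~8 of the proof of Theorem~\ref{ThmProductStructure} shows that two points $(h,\delta),(h',\delta')$ of the source have isomorphic image local $G$-shtukas only if $h'\in J_{b'}\cdot h$ in $LG/K$. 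Hence $\pi^{-1}(\CC_{\ul\BG,\CN_\nu})$ is either empty or is the product of a single $J_{b'}$-orbit in $LG/K$ with the factor $(I_{\ol N}/x^{-1}I_{\ol N}x)^{\wedge}$. The discreteness argument of Step~8—only finitely many elements of $J_{b'}/(J_{b'}\cap K)$ lie in the bounded set $X_{\preceq\mu,K}(b')^{\wedge}$—applies to any such orbit, by translating by an orbit representative $h_0$ and replacing $K$ by $h_0Kh_0^{-1}$. Thus $\pi^{-1}(\CC_{\ul\BG,\CN_\nu})=\coprod_{i=1}^r\{h_i\}\whtimes(I_{\ol N}/x^{-1}I_{\ol N}x)^{\wedge}$ for a finite set $\{h_1,\dots,h_r\}\subset X_{\preceq\mu,K}(b')^{\wedge}$; this is closed, which completes the argument.
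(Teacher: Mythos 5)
Your proof takes essentially the same route as the paper's: reduce via constructibility (Lemma~\ref{LemmaCLeafConstructible}) to stability under specialization, transfer to the universal deformation of the local $G$-shtuka at the limit point, and then invoke the product structure of Theorem~\ref{ThmProductStructure} together with the rigidity/discreteness analysis of Step~8 of its proof. The only presentational difference is that you lift the specialization along a DVR and formulate closedness of $\CC_{\ul\BG,\CN_\nu}$ in the Newton stratum of the universal deformation as an explicit intermediate claim, whereas the paper replaces $S$ by the completion of $\CN_{\nu,S}$ at the limit point $y$ and analyzes the preimage of the image of $\CC_{\ul\BG,S}$ directly; these are equivalent.

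One small gap worth closing: when you assert that $\alpha^{-1}(\CC_{\ul\BG,\CN_\nu})=\coprod_{i=1}^{r}\{h_i\}\whtimes(I_{\ol N}/x^{-1}I_{\ol N}x)^{\wedge}$ is closed, finiteness of the $h_i$ alone does not give closedness of the slices $\{h_i\}$ in the local scheme $X_{\preceq\mu,K}(b')^{\wedge}$. You need the observation (made explicitly in the paper) that each $h_i$ lies in $J_{b'}\cdot g\subset LG/K(k)$, hence is a closed point defined over $k$, by Proposition~\ref{PropFieldOfDefQIS}. Since $X_{\preceq\mu,K}(b')^{\wedge}$ is a complete local scheme over $k$, any such $k$-point is the unique closed point, which in fact forces $r\le 1$ (and, when $r=1$, $h_1=1$, recovering the identification $\ul\BG\cong\ul\BG'$). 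With this remark added your argument is complete and matches the paper's.
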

\begin{proof}
We write $\ul\BG=(K_{k},b_0\s)$ for some $b_0$ and let $\mu$ be such that $b_0$ is bounded by $\mu$. By Lemma \ref{LemmaCLeafConstructible} $\CC_{\ul\BG,S}$ is constructible, so it is enough to show that it is stable under specialization in $\mathcal{N}_{\nu, S}$. Let $\ul \CF=(K_k,b\s)$ be the local $G$-shtuka with $K$-structure associated with a point $y$ in the closure of $\CC_{\ul\BG,S}$ and in $\CN_{\nu,S}$. As the Newton points of $\ul\BG$ and $\ul \CF$ coincide there is a $g\in LG(k)$ whose equivalence class modulo $K$ lies in $X_{\preceq\mu,K}(b)$ with $g^{-1}b\s(g)=b_0$, and all such $g$ lie in one orbit of $J=J_b$. We may replace $S$ by the completion of $\CN_{\nu,S}$ in $y$. Let $\CN_{\nu}$ be the closed Newton stratum in the universal deformation of $\ul \CF$ bounded by $\mu$. Then there is a morphism $S\rightarrow \mathcal{N}_{\nu}$ such that the local $G$-shtuka on $S$ is the pull-back of the universal local $G$-shtuka with $K$-structure over $\CN_{\nu}$. We consider the inverse image in $X_{\preceq\mu,K}(b)^\wedge\,\whtimes_k\,(I_{\ol N}/x^{-1}I_{\ol N}x)^{\wedge}$ of the image of $\CC_{\ul\BG,S}$ in $\mathcal{N}_{\nu}$. By assumption it is nonempty. We consider its first component (in $X_{\preceq\mu,K}(b)^{\wedge}$) in the different points of this inverse image. On the one hand, the local $G$-shtuka with $K$-structure corresponding to such a point is isomorphic to $\ul\BG$, hence the first component has to be of the form $jgK$ for some $j\in J_b$. Note that $jgK\in LG/K$ is a closed point defined over $k$ for each $j$. On the other hand it is in the completion of $X_{\preceq\mu,K}(b)$ at $1$. By the same argument as in the proof of the last assertion of Theorem \ref{ThmProductStructure} we see that the first component has to be constant, and equal to the unique closed point $1$ of $X_{\preceq\mu,K}(b)^{\wedge}$. Thus $K\cap Jg\neq\emptyset$, hence $b_0$ and $b$ are $\sigma$-conjugate under $K$, and $y\in \CC_{\ul\BG,S}$.
\end{proof}

\begin{corollary}\label{appladlv}
Let $b\in LG(k)$ and let $\nu$ be the Newton point of $b$.
\begin{enumerate}
\item Let $\mu\in X_*(\T)$ be dominant with $X_{\mu}(b)\neq \emptyset$. Then $X_{\preceq\mu}(b)$ and $X_{\mu}(b)$ are equidimensional of dimension $\langle\rho,\mu-\nu\rangle-\frac{1}{2}(\rk_{\mathbb{F}_q\dpl z\dpr}(G)-\rk_{\mathbb{F}_q\dpl z\dpr}(J_b))$. In particular, $X_{\mu}(b)$ is open and dense in $X_{\preceq\mu}(b)$.
\item Let $y\in \wt W$ with $X_y(b)\neq \emptyset$. Let $C$ be an irreducible component of $X_y(b)$ and let $\ell(\nu,y,C)$ denote the maximal length of a chain of Newton points $\succeq\nu$ which is realizable in the universal deformation of Weyl type $y$ of a point $\ul\CG$ associated with $g^{-1}b\s(g)$ for some $g$ in the smooth locus of $C$ (see \cite[Definition 7.6]{HV1}). Then $\dim C\geq\ell(y)-\langle 2\rho,\nu\rangle-\ell(\nu, y)$. On the other hand $\dim X_y(b)=\ell(y)-\langle 2\rho,\nu\rangle-\codim(\CN_{\nu})$ where $\codim(\CN_{\nu})$ denotes the codimension of the Newton stratum associated with $\nu$ in $IyI$.
\end{enumerate}
\end{corollary}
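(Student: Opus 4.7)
The proof combines three ingredients: the product structure from Theorem \ref{ThmProductStructure}, purity of the Newton stratification \cite[Theorem 7.4]{HV1}, and the upper bounds $\dim X_{\mu'}(b) \leq d(\mu') := \langle \rho, \mu' - \nu\rangle - \tfrac{1}{2}(\rk_{\BF_q\dpl z\dpr} G - \rk_{\BF_q\dpl z\dpr} J_b)$ established in \cite{GHKR,Viehmann06} for every dominant $\mu'$ with $X_{\mu'}(b) \neq \emptyset$. Writing $d := d(\mu)$, one has $d(\mu') < d$ for every $\mu' \prec \mu$, so the cited works immediately give $\dim X_i \leq d$ for every irreducible component $X_i$ of $X_{\preceq\mu}(b)$. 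The task is to promote this to equidimensionality and to show that $X_\mu(b)$ is open and dense in $X_{\preceq\mu}(b)$.

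First I would establish a matching lower bound pointwise. For any closed point $g \in X_{\preceq\mu,K_0}(b)(k)$, set $b_g := g^{-1}b\s(g)$; then left translation by $g^{-1}$ identifies $X_{\preceq\mu,K_0}(b)^\wedge$ at $g$ with $X_{\preceq\mu,K_0}(b_g)^\wedge$ at $1$. Applying Theorem \ref{ThmProductStructure} to the local $G$-shtuka $(K_{0,k}, b_g\s)$ produces a finite surjective morphism $X_{\preceq\mu,K_0}(b_g)^\wedge \,\whtimes_k\, \CI^\wedge \to \CN_\nu$, where $\CN_\nu$ is the Newton stratum of $[b]$ in the universal deformation of $(K_{0,k}, b_g\s)$ bounded by $\mu$, and $\CI^\wedge$ is the completion of $\mathbb{A}^{\langle 2\rho,\nu\rangle}$ at $0$. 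Since finite surjective morphisms preserve dimension on each irreducible component, this yields $\dim_g X_{\preceq\mu,K_0}(b) = \dim \CN_\nu - \langle 2\rho, \nu\rangle$, where $\dim_g$ denotes the maximum dimension of components through $g$.

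Next I would extract the lower bound from purity. The universal deformation has dimension $\langle 2\rho, \mu\rangle$ by Proposition \ref{PropDefo}, and by \cite[Theorem 7.4]{HV1} every irreducible component of $\CN_\nu$ has codimension at most the maximum length $\ell(\nu, \mu)$ of a chain of Newton points strictly between $\nu$ and $\mu$ with common image in $\pi_1(G)$. The Chai-type formula $\ell(\nu, \mu) = \langle \rho, \mu - \nu\rangle + \tfrac{1}{2}(\rk G - \rk J_b)$ then gives $\dim \CN_\nu \geq \langle 2\rho, \nu\rangle + d$, so $\dim_g X_{\preceq\mu,K_0}(b) \geq d$ at every closed point $g$. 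If some component $X_i$ of $X_{\preceq\mu}(b)$ satisfied $\dim X_i < d$, then choosing $g \in X_i$ outside all other components would give $\dim_g X_{\preceq\mu}(b) < d$, a contradiction. Hence every component has dimension exactly $d$, proving equidimensionality of $X_{\preceq\mu}(b)$. Each such component must further contain points of $X_\mu(b)$, since components of $\overline{X_{\mu'}(b)}$ for $\mu' \prec \mu$ have dimension $\leq d(\mu') < d$; consequently $X_\mu(b)$ is open and dense in $X_{\preceq\mu}(b)$ and equidimensional of the same dimension $d$.

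The main obstacle is the Chai-type identity $\ell(\nu, \mu) = \langle \rho, \mu - \nu\rangle + \tfrac{1}{2}(\rk G - \rk J_b)$ used to convert the abstract purity bound into the precise lower bound $d$. Verifying it requires analyzing maximal chains of Newton points between $\nu$ and $\mu$ and matching the length of each step against the difference of the dimension formulas $d(\mu') - d(\mu'')$ for adjacent Newton points, together with a careful treatment of the defect statistic $\rk G - \rk J_b$ for general split reductive $G$. Once this combinatorial identity is in hand, the remaining steps of the argument — passage between points of $X_{\preceq\mu}(b)$ via the $b_g$-trick, and the component-counting argument at the end — are essentially formal consequences of Theorem \ref{ThmProductStructure} and the cited upper bounds.
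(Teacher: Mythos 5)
Your argument for part~(a) follows the same route as the paper: completing at a chosen point of the affine Deligne--Lusztig variety, invoking the product structure of Theorem~\ref{ThmProductStructure}, bounding $\dim\CN_\nu$ from below via the purity of the Newton stratification in \cite[Corollary 7.7]{HV1}, subtracting $\langle 2\rho,\nu\rangle = \dim\CI^\wedge$, and then combining with the upper bounds of \cite{GHKR,Viehmann06}. The one cosmetic difference is that you complete at an arbitrary closed point and then appeal to a ``generic closed point of $X_i$'' selection at the end, whereas the paper simply chooses $g$ to be a \emph{smooth} point of the component $X$ from the start, which makes the completion $X_{\preceq\mu}(b)^\wedge$ at $1$ irreducible and directly equal to the completion of $X$; that is a small simplification rather than a substantively different argument. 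You are also right to flag the Chai-type chain-length identity $\ell(\nu,\mu)=\langle\rho,\mu-\nu\rangle+\tfrac12(\rk G-\rk J_b)$ as the combinatorial input needed, but you should simply cite it (it is \cite[Proposition 7.8]{HV1}, cf.\ also \cite[Theorem 7.4(iv)]{Chai}) rather than propose to re-derive it.

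The genuine gap is that you only address part~(a). The corollary also contains part~(b), on the Iwahori-level varieties $X_y(b)$ for $y\in\wt W$, which requires Theorem~\ref{ThmProductI} (the Iwahori analogue of the product structure), the deformation space of dimension $\ell(y)$ from Proposition~\ref{PropDefo}, and a separate treatment of the chain length $\ell(\nu,y,C)$ (defined via the smooth locus of $C$ precisely because the statement involves a chosen component). In that case the purity bound yields the lower estimate $\dim C\geq \ell(y)-\langle 2\rho,\nu\rangle-\ell(\nu,y)$, and the exact dimension formula for $\dim X_y(b)$ in terms of $\codim(\CN_\nu)$ comes from the same finite surjective factorization applied to a component of $\CN_\nu$ of minimal codimension. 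You should add this; it is parallel to your argument but uses different inputs (Proposition~\ref{PropDefo} with $\ell(y)$ in place of $\langle2\rho,\mu\rangle$, and Theorem~\ref{ThmProductI} in place of Theorem~\ref{ThmProductStructure}), and the smoothness of $g$ on $C$ is not optional there, since $\ell(\nu,y,C)$ is defined through the smooth locus.
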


\begin{remark}
For the affine Deligne-Lusztig varieties $X_{\mu}(b)$ it is known by \cite{GHKR}, \cite{Viehmann06} that their dimension is equal to $\langle\rho,\mu-\nu\rangle-\frac{1}{2}(\rk_{\mathbb{F}_q\dpl z\dpr}(G)-\rk_{\mathbb{F}_q\dpl z\dpr}(J_b))$. In \cite{HV1} we proved both assertions of the corollary under the additional assumption that $b$ is basic. Equidimensionality of $X_{\mu}(b)$ for $b\in\T(\mathbb{F}_q\dpl z\dpr)$ (not necessarily basic) is shown in \cite[Proposition 2.17.1]{GHKR}. The last assertion of the corollary proves \cite[Conjecture 1]{Beazley}. In general, the affine Deligne-Lusztig varieties $X_y(b)$ are not equidimensional (see \cite{GoertzHe}).
\end{remark}

\begin{proof}[Proof of Corollary \ref{appladlv}]
Let $X$ be an irreducible component of the affine Deligne-Lusztig variety. Let $g$ be a smooth point on $X$. By replacing $b$ by $g^{-1}b\s(g)$ we may assume that $g=1$ is smooth and lies on the component we want to consider. Let $\ul\CG$ be the local $G$-shtuka with $K_0$- respectively $I$-structure associated with $b$. We consider its universal deformation by local $G$-shtukas bounded by $\mu$, respectively of affine Weyl type $y$, which by Proposition~\ref{PropDefo} is pro-representable by a complete noetherian local ring $\ol\CalD$ of dimension $\langle 2\rho,\mu\rangle$, respectively $\ell(y)$. By \cite[Corollary 7.7]{HV1}, the codimension of an irreducible component of the Newton stratum associated with the Newton point $\nu$ of $\ul\CG$ in $\Spec \ol\CalD$ is less or equal to the maximal length of a chain of Newton points ordered by $\preceq$ and lying between $\mu$ and $\nu$, respectively realizable in $(IyI)^{\wedge}_b$ (in the second case this is what we call $\ell(\nu,y)$ above). Thus in the first case we obtain as in \cite[Proposition 7.8]{HV1} that the dimension of each irreducible component of the Newton stratum is at least $\langle\rho,\mu+\nu\rangle-\frac{1}{2}(\rk_{\mathbb{F}_q\dpl z\dpr}(G)-\rk_{\mathbb{F}_q\dpl z\dpr}(J_b))$. The assertions of the corollary now follow from the product structures in Theorem \ref{ThmProductStructure} and Theorem \ref{ThmProductI} together with the fact that $\dim (I_{\ol N}/x^{-1}I_{\ol N}x)=\dim (I/(I\cap x^{-1}Ix))=\ell(x)=\langle 2\rho,\nu\rangle$ by \cite[Proposition 13.1.3 (3)]{GHKR2}. Here $x$ is a $P$-fundamental alcove corresponding to $[b]$. 
\end{proof}

\begin{corollary}
Let $\ul\BG$ be a local $G$-shtuka with $K$-structure over $k$ bounded by some $\mu\in X_\ast(T)$. 
\begin{enumerate}
\item Let $\nu$ be the Newton polygon of $\ul\BG$. Then the closed Newton stratum in the universal deformation ring for deformations of $\ul\BG$ that are bounded by $\mu$ is equidimensional of dimension $\langle\rho,\mu+\nu\rangle-\frac{1}{2}(\rk_{\mathbb{F}_q\dpl z\dpr}(G)-\rk_{\mathbb{F}_q\dpl z\dpr}(J_b))$.
\item The generic fiber of the universal local $G$-shtuka with $K$-structure over the universal deformation ring for deformations of $\ul\BG$ that are bounded by $\mu$ has Newton polygon $\mu$.
\end{enumerate}
\end{corollary}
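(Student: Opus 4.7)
The plan is to derive both parts of the corollary from the product-structure theorem for Newton strata (Theorem~\ref{ThmProductStructure}) combined with the dimension formula for affine Deligne--Lusztig varieties proved in Corollary~\ref{appladlv}(a). For part (a), I would fix a $P$-fundamental alcove $x$ associated with $[b]$ in which the Levi $M$ is the centralizer of the $M$-dominant Newton point of $x$; Theorem~\ref{ThmProductStructure} then produces a finite surjective morphism
\[
X_{\preceq\mu,K}(b)^\wedge\whtimes_k(I_{\ol N}/x^{-1}I_{\ol N}x)^\wedge\;\twoheadrightarrow\;\CN_\nu.
\]
Finite surjective morphisms of reduced noetherian schemes preserve dimension and equidimensionality, so it suffices to verify these on the source. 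The second factor is the completion of a smooth irreducible scheme of dimension $\ell(x)=\langle2\rho,\nu\rangle$, an identity already exploited in the proof of Corollary~\ref{appladlv}. Corollary~\ref{appladlv}(a) supplies equidimensionality and dimension $\langle\rho,\mu-\nu\rangle-\tfrac{1}{2}(\rk_{\BF_q\dpl z\dpr}(G)-\rk_{\BF_q\dpl z\dpr}(J_b))$ for the first factor. Summing the two contributions matches exactly the dimension claimed in (a).

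For part (b), I would let $\nu'$ denote the Newton polygon of the universal local $G$-shtuka at the generic point of $\Spec\ol\CalD$. The Mazur-type inequality $\nu_{\ul\CG}\preceq\mu_{\ul\CG}\preceq\mu$ recalled in the introduction forces $\nu'\preceq\mu$. Since $\CN_{\nu'}$ contains the generic point and $\Spec\ol\CalD$ is irreducible of dimension $\langle2\rho,\mu\rangle$ by Proposition~\ref{PropDefo}, the stratum $\CN_{\nu'}$ is open and dense in $\Spec\ol\CalD$, hence of dimension $\langle2\rho,\mu\rangle$. I would then pick a closed point $s\in\CN_{\nu'}$, identify the formal neighbourhood of $s$ in $\Spec\ol\CalD$ with the universal deformation of $\ul\CG_s$ bounded by $\mu$, and apply part (a) to that deformation to obtain
\[
\langle2\rho,\mu\rangle\;=\;\langle\rho,\mu+\nu'\rangle-\tfrac{1}{2}\bigl(\rk_{\BF_q\dpl z\dpr}(G)-\rk_{\BF_q\dpl z\dpr}(J_{b'})\bigr),
\]
where $b'$ represents the $\sigma$-conjugacy class of Newton point $\nu'$. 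Rearranging gives $\langle\rho,\mu-\nu'\rangle=-\tfrac{1}{2}(\rk(G)-\rk(J_{b'}))$; the left-hand side is non-negative since $\mu-\nu'$ is a non-negative integral combination of simple coroots $\alpha^\vee$ with $\langle\rho,\alpha^\vee\rangle=1$, whereas the right-hand side is non-positive. Both must therefore vanish, which forces $\nu'=\mu$ as desired.

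The main obstacle will be the rigorous transport of part (a) from the distinguished closed point of $\Spec\ol\CalD$ to an arbitrary closed point $s\in\CN_{\nu'}$ in the proof of (b): this requires the standard, but still non-trivial, identification of the formal neighbourhood of $s$ in $\Spec\ol\CalD$ with the universal deformation of $\ul\CG_s$ bounded by $\mu$, together with the compatibility of this identification with the Newton stratification. A secondary bookkeeping issue is the level structure $K$: Corollary~\ref{appladlv}(a) is phrased at level $K=K_0$, while Theorem~\ref{ThmProductStructure} involves $X_{\preceq\mu,K}(b)$, so its use in part (a) passes through the smooth $K_0/K$-fibration $X_{\preceq\mu,K}(b)\to X_{\preceq\mu}(b)$, whose contribution to the dimension needs to be matched against the $K$-dependence implicit in the universal deformation dimension of Proposition~\ref{PropDefo}.
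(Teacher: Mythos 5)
Your proof of part (a) follows the paper's: both invoke Theorem~\ref{ThmProductStructure} to reduce the dimension and equidimensionality questions to the two factors $X_{\preceq\mu,K}(b)^\wedge$ and $(I_{\ol N}/x^{-1}I_{\ol N}x)^\wedge$, then read off the answer from Corollary~\ref{appladlv}(a) and the identity $\dim(I_{\ol N}/x^{-1}I_{\ol N}x)=\ell(x)=\langle 2\rho,\nu\rangle$. (The ``bookkeeping issue'' you flag about $K$ versus $K_0$ is real; the clean match between the formula in (a) and the two factors is at level $K_0$.)

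Your proposal for (b) takes a genuinely different route and contains a genuine gap. The paper does \emph{not} derive (b) from (a): it applies the purity-type estimate \cite[Corollary~7.7(b)]{HV1}, namely
$\sum_i\lceil\langle\omega_i,\mu-\nu\rangle\rceil\le\sum_i\lceil\langle\omega_i,\nu_0-\nu\rangle\rceil$,
interprets both sides as maximal chain lengths of Newton polygons between $\nu$ and $\mu$ (resp.\ $\nu_0$) via \cite[Theorem~7.4(iv)]{Chai}, and concludes from $\nu_0\preceq\mu$ that $\nu_0=\mu$. Your route instead counts dimensions at a closed point $s$ of the open dense stratum $\CN_{\nu'}$ and applies (a) to a universal deformation of $\ul\CG_s$. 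The final algebra ($\langle\rho,\mu-\nu'\rangle=-\tfrac12(\rk G-\rk J_{b'})$, non-negative versus non-positive, forcing both to vanish) is fine, but the ``transport'' step you describe as ``standard'' is not, and in fact fails as stated. A closed point $s$ of the open subscheme $\CN_{\nu'}\subset\Spec\ol\CalD$ (when $\nu'\neq\nu$) is not a closed point of $\Spec\ol\CalD$; the completed local ring $\wh\CO_{\ol\CalD,s}$ therefore has Krull dimension $\dim\ol\CalD-\dim\overline{\{s\}}<\langle2\rho,\mu\rangle$, whereas the universal deformation ring of $\ul\CG_s$ bounded by $\mu$ (Proposition~\ref{PropDefo}) has Krull dimension exactly $\langle2\rho,\mu\rangle$. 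Hence these two rings cannot be identified; the classifying map is at best a proper quotient, and the dimension count you propose does not go through without a substantial additional argument (algebraization/spreading out plus compatibility of Newton strata across the comparison). The paper sidesteps all of this by going straight to the combinatorial chain-length inequality.
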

\begin{proof}
The first assertion follows from Corollary \ref{appladlv} and Theorem \ref{ThmProductStructure}. Let $\nu_0$ be the Newton point at the generic point of the deformation space.  By \cite[Corollary 7.7(b)]{HV1} we obtain $$\sum_i \lceil\langle\omega_i,\mu-\nu\rangle\rceil\leq \sum_i \lceil\langle\omega_i,\nu_0-\nu\rangle\rceil.$$ The two sides of this inequality are the maximal lengths of chains of comparable Newton polygons between $\nu$ and $\mu$, respectively $\nu_0$ (see \cite[Theorem 7.4 (iv)]{Chai}). As $\nu_0\preceq\mu$, the right hand side is strictly smaller than the left hand side unless $\mu=\nu_0$.
\end{proof}

%
%

{\small

}

\vfill

\begin{minipage}[t]{0.5\linewidth}
\noindent
Urs Hartl\\
Universit\"at M\"unster\\
Mathematisches Institut \\
Einsteinstr.~62\\
D -- 48149 M\"unster
\\ Germany
\\[1mm]
{\small www.math.uni-muenster.de/u/urs.hartl/}
\end{minipage}
\begin{minipage}[t]{0.45\linewidth}
\noindent
Eva Viehmann\\
Universit\"at Bonn\\
Mathematisches Institut \\
Endenicher Allee 60\\
D -- 53115 Bonn
\\ Germany
\\[1mm]
\end{minipage}

\end{document}